\titleformat{\subsection}{\normalfont\large\raggedright\it}{\thesubsection}{1em}{}
\newcommand{\thickhline}{%
    \noalign {\ifnum 0=`}\fi \hrule height 1pt
    \futurelet \reserved@a \@xhline
}
\newcolumntype{"}{@{\hskip\tabcolsep\vrule width 1pt\hskip\tabcolsep}}
\def\txte{{\textnormal{e}}}
\newsavebox{\@brx}
\newcommand{\llangle}[1][]{\savebox{\@brx}{\(\m@th{#1\langle}\)}%
  \mathopen{\copy\@brx\kern-0.5\wd\@brx\usebox{\@brx}}}
\newcommand{\rrangle}[1][]{\savebox{\@brx}{\(\m@th{#1\rangle}\)}%
  \mathclose{\copy\@brx\kern-0.5\wd\@brx\usebox{\@brx}}}
\newcommand{\RomanNumeralCaps}[1]
    {\MakeUppercase{\romannumeral #1}}
\newtheorem{deff}{Definition}[section]
\newtheorem{thm}[deff]{Theorem}
\newtheorem{lm}[deff]{Lemma}
\newtheorem{cor}[deff]{Corollary}
\theoremstyle{definition}
\newtheorem{remark}[deff]{Remark}
\numberwithin{equation}{section}
\title{Probability of Transition to Turbulence\\ in a Reduced Stochastic Model of Pipe Flow}
\author[1,$\dag$]{P. Bernuzzi}
\author[1,2]{C. Kuehn}
\affil[1]{\footnotesize{Technical University of Munich, School of Computation Information and Technology, Department of
Mathematics, Boltzmannstraße 3, 85748 Garching, Germany}}
\affil[2]{\footnotesize{Technical University of Munich, Munich Data Science Institute, Walther-von-Dyck-Straße 10, 85748 Garching, Germany}}
\affil[$\dag$]{Author to whom any correspondence should be addressed.\smallskip 

Email addresses: paolo.bernuzzi@ma.tum.de (P. Bernuzzi), ckuehn@ma.tum.de (C. Kuehn).}
\date{\today}
\begin{document}

\maketitle


\begin{abstract}
    We study the phenomenon of turbulence initiation in pipe flow under different noise structures by estimating the probability of initiating metastable transitions. We establish lower bounds on turbulence transition probabilities using linearized models with multiplicative noise near the laminar state. First, we consider the case of stochastic perturbations by It\^o white noise; then, through the Stratonovich interpretation, we extend the analysis to noise types such as white and red noise in time. 
    Our findings demonstrate the viability of detecting the onset of turbulence as rare events under diverse noise assumptions. The results also contribute to applied SPDE theory and offer valuable methodologies for understanding turbulence across application areas.
\end{abstract}

\small{\textbf{Keywords:} SPDEs, transition to turbulence, plane Couette flow, red noise, metastability. }

\small{\textbf{MSC codes:} 
37A50,   
60H15,   
76F06,  
76M35.   

\small{\textbf{Funding:} This work was supported by the European Union’s Horizon 2020 research and innovation programme under Grant Agreement 956170.}}

\pagestyle{fancy}
\fancyhead{}
\renewcommand{\headrulewidth}{0pt}
\fancyhead[C]{\textit{Probability of Transition to Turbulence in a Reduced Stochastic Model of Pipe Flow}}

\section{Introduction}

The flow of a fluid is known to display different behaviour depending on the assumptions on the fluid and the shape of the geometry traversed. We focus on plane Couette flow, or flow along a pipe, and the transition to turbulence starting from an initially laminar state \cite{landau1987fluid}. Recently, this area has been connected to directed percolation, a classical area of probability theory. It is known that the transition to turbulence crucially depends upon the Reynolds number, i.e., on the ratio between inertial and viscous forces in a fluid. From a mathematical perspective, studying the transition to turbulence directly in the Euler or Navier-Stokes equations turns out to be extremely difficult. Yet, quite recently, other simplified models have been proposed and directly validated against experiments.
\smallskip

The first model proposed in \cite{barkley2011modeling,barkley2016theoretical} is defined by two coupled stochastic partial differential equations (SPDEs) with second-order dissipation, an advection term, and multiplicative degenerate It\^o noise \cite{munoz2003multiplicative}. The solution of this system may display structures labeled as slugs for high Reynolds numbers, and the turbulent state covers the whole pipe in finite time with a high probability. Conversely, for low values of the corresponding parameter, the turbulent regions are absorbed in the laminar state in finite time with high probability. Lastly, traveling structures labeled as puffs indicate a fluctuating transition for intermediate values. In such cases, metastable transitions between turbulent and laminar states are observed in the pipe. The study of the rise and the splitting of puffs is relevant to describe the transition to turbulence as a rare event \cite{gome2022extreme}. The steps involved in such an occurrence can be described through the computation of instantons, most likely paths to display rare occurrences \cite{bernuzzi2024large,lestang2018computing,wang2020improvements}. 
\smallskip

The complex structure provided by the model discussed above is not observed for all fluids and is in contrast with the perspective of \cite{pomeau1986front,pomeau2015transition}. In this case, the turbulent state is seen as a fluctuating state able to decay spontaneously. Conversely, the laminar state is an absorbing state, which is unable to induce turbulence into the system. The alternative model proposed in \cite{gome2024phase} is a one-dimensional SPDE that does not display advection. Furthermore, this simplified model does not show well-defined traveling states such as puffs and slugs. For this reason, the flow is labeled band-free. In particular, the transition to turbulence is primarily driven by noise and occurs more rarely than in the previously discussed cases. The transition mechanism can be studied numerically through the adjoint state method \cite{bernuzzi2024large}, which captures the instanton for various types of noise. Alternatively, the Trajectory-Adaptive Multilevel Sampling algorithm (TAMS) \cite{lestang2018computing,wang2020improvements} computes different trajectories that display such an event. Along with the description of such an occurrence, the analytic estimation of the probability of jumps provides a valuable tool to predict the likelihood of this phenomenon. 
\smallskip

The metastable jump to turbulence is rare under the assumption of initial conditions close to the laminar state. Under such conditions, we label this transition as turbulence initiation, or turbulence onset, to indicate that it is primarily induced by noise. Therefore, the linearization of the model near such a steady state is a natural simplification. The linearized system resembles the cable equation \cite{walsh1986introduction}, or a heat equation with a second linear dissipative term, with multiplicative noise. The literature regarding such a model is vast. These types of SPDEs have been studied in a theoretical context to understand the influence of noise on potential finite-time blow-up of the solution. In particular, for standard white noise multiplied by a multiplicative term of order $\gamma> \frac{3}{2}$ the mild solution is proven to diverge to infinity in finite time \cite{mueller2000critical,mueller1993blowup}. Conversely, for a multiplicative term of order $0\leq \gamma\leq \frac{3}{2}$ the mild solution of the linear system does not explode in finite time \cite{mueller1991long,salins2023solutions}. Although the band-free plane Couette flow model considers noise of order $\gamma=1$, we show in this paper that analytic techniques employed in the proof of blow-up of the mild solution, in \cite{mueller2000critical}, can be applied to our case of interest to describe turbulence initiation under It\^o white Gaussian noise.
\smallskip

In \cite{barkley2016theoretical}, different types of noise are admitted to perturb the system, although only It\^o noise is addressed to simplify numerical simulations. In this work, we study the onset of turbulence for various Gaussian noise terms. For instance, Stratonovich noise is a natural alternative to It\^o noise in physical applications and fluid dynamics, particularly due to the chain rule property \cite{flandoli20212d,holm2019stochastic}. The computation of higher bounds for the probability of metastable transitions has already been achieved for cable equations under the assumption of additive It\^o noise \cite{berglund2023stochastic,bernuzzi2023bifurcations}. Under such assumptions, the solution to the problem displays a qualitatively different behaviour from the multiplicative noise case: first, its sign can change in contrast to the cable equation with multiplicative noise; furthermore, the laminar state is not an absorbing state, which is a fundamental property of the original system. Nevertheless, the Cole-Hopf transformation reveals structural parallels between the systems. In fact, the study of the strong solution of the original system under Stratonovich noise on a logarithmic scale, in the form of the KPZ equation, displays an additive noise term \cite{barna2019analytic,corwin2018exactly,hairer2013solving}. Standard techniques enable the construction of a lower bound to the probability of transition to turbulence in specific domain regions. This method is then extended to the case of red noise in order to introduce memory in the noise component. We discuss two different interpretations of red (Stratonovich) noise, which are known to find applications in climate science \cite{morr2022red,kuehn2021warning}.
\smallskip

In conclusion, our methods prove the possibility of transition to turbulent states under different types of noise assumptions in simplified fluid dynamics models that have been suggested in applications. These results are implied by the stochastic perturbations in the system despite the non-trivial nature of the noise involved. Furthermore, they advance the analytic study of SPDEs, particularly with (multiplicative) Stratonovich noise.
\smallskip

The paper is structured as follows. In Section \ref{sec:2}, we introduce our main mathematical tools. Moreover, we justify the linearization of the system as a method to study the solutions in the proximity to the laminar state and to obtain a lower bound to the probability of the transition to turbulence. We construct the fundamental solution of the cable equation and discuss its properties. In Section \ref{sec:3}, we consider perturbations enforced by It\^o noise. We study the mild solution of the linearized model on the laminar state. By applying a suitable operator to counter the effect of the drift component, we obtain an observable in the form of a martingale. Through the use of similar techniques employed as in \cite{mueller2000critical}, we obtain the bound to the transition to turbulence. In Section \ref{sec:4}, we consider the noise term in the Stratonovich interpretation. We assume first space-time white noise and then red noise in time. A lower bound to the local and global transition to turbulence is proven by studying its strong solution on a logarithmic scale through the Cole-Hopf transformation \cite{hairer2013solving}. The methods are also shown to be extendable to the case of space heterogeneity in the system \cite{bernuzzi2023bifurcations}. In Section \ref{sec:5}, we compare the stated methods and discuss the differences with already existing approaches.

\section{Preliminaries and linearization} \label{sec:2}

We introduce the spatial interval $[0,L]$ for $L>0$, to be interpreted as the width of a pipe. For $x\in[0,L]$ and $t>0$, we define a variable $q=q(x,t)$ modelling the turbulence level of a fluid along the plane Couette flow as discussed in~\cite{barkley2016theoretical,gome2024phase}. We study a family of stochastic partial differential equations (SPDEs) used as simplified transition-to-turbulence models, for which we provide a mathematical and physical interpretation below. The SPDEs are given by
\begin{align} \label{eq:syst_q}
    \left\{
    \begin{alignedat}{2}
    \text{d} q(x,t) = &\left( \partial_{xx}^2 q(x,t) - q(x,t) + (r+1) q(x,t)^2 \left( 2 - q(x,t) \right) + \sigma_{\text{R}} q(x,t) \circ F( \xi ) (x,t) \right) \text{d}t \\
    & + \sigma_{\text{I}} q(x,t) Q^{\frac{1}{2}} \text{d}W_t +  \sigma_{\text{S}} q(x,t) \circ Q^{\frac{1}{2}} \text{d}W_t, \\
    \partial_x q(0,t) = &\partial_x q(L,t) = 0 , \\ 
    q(x,0) = &q_0(x).
    \end{alignedat}
    \right.
\end{align}
We interpret $r>0$ as a value related to the Reynolds number, which defines the behaviour of the solution. The family of models \eqref{eq:syst_q} includes noises of different forms to consider different interpretations of the small stochastic forcing. Therefore, the following parameters are to be interpreted as intensities of the noise term and are justified by models describing turbulence: $\sigma_{\text{I}} \geq 0$ as the intensity of white noise interpreted in the It\^o sense \cite{gome2024phase}; $\sigma_{\text{S}} \geq 0$ as the intensity of white noise interpreted in the Stratonovich perspective \cite{flandoli20212d}, labeled with $\circ$; $\sigma_{\text{R}} \geq 0$ as a correlation intensity which can induce perturbations in the system \cite{kuehn2021warning,morr2022red} in Stratonovich sense. The It\^o and Stratonovich assumptions are mathematically equivalent, as they can be converted by including the It\^o-Stratonovich correction term \cite{gardiner1985handbook,twardowska2004relation}. As such, we always assume $\sigma_{\text{I}} \sigma_{\text{S}} =0$. Then, under the assumption of red noise, we introduce the adapted Ornstein-Uhlenbeck process $\xi=\xi(x,t)$ in $L^2([0,L])$ and assume $\sigma_{\text{R}} > 0$. The operator $F$ is interpreted as a differential operator that maps the Ornstein-Uhlenbeck process to $L^2([0,L])$. Examples are provided in Subsection \ref{subsec:red}. Conversely, the noise is interpreted in Section \ref{sec:3} as It\^o white noise in time, i.e., $\sigma_{\text{I}} > \sigma_{\text{S}} = \sigma_{\text{R}} = 0$, and in Subsection \ref{subsec:white} as Stratonovich white noise in time, i.e., $\sigma_{\text{S}} > \sigma_{\text{I}}=\sigma_{\text{R}} = 0$. The systems discussed in the paper are converted to the It\^o noise form in Appendix \ref{app:A}.
We introduce the cylindrical Wiener process $W$, as defined in \cite[Chapter 4]{da2014stochastic}, i.e.,
\begin{align} \label{eq:eta}
    W_t:=W(x,t)=\sum_{i=0}^\infty b_i(x) \beta_i(t) ,
\end{align}
for $\left\{\beta_i\right\}_{i\in\mathbb{N}}$ a family of independent normalized scalar Wiener processes in time, with filtration $\mathcal{F}_t^{W}$, and $\left\{b_i\right\}_{i\in\mathbb{N}}$ a basis of $L^2([0,L])$. The noise $Q^{\frac{1}{2}} W$, considered as minor fluctuations in the fluid, is then assumed to be a $Q$-Wiener process. The operator $Q$ is self-adjoint and non-negative. We define its eigenvalues as $\left\{\zeta_i\right\}_{i\in\mathbb{N}}$ with corresponding eigenbasis $\left\{b_i\right\}_{i\in\mathbb{N}}$ of $L^2([0,L])$, described further below in each section. The operator also assumes at least one of the following properties: it is trace-class, or it is bounded with eigenfunctions that converge to the eigenfunctions of the Laplacian in $L^\infty([0,L])$-norm with rate sufficient to imply the existence of the solution \cite{bernuzzi2023bifurcations}. 
The boundary conditions are specified as homogeneous Neumann, although our methods also extend to periodic boundary conditions.
Lastly, the initial condition $q_0$ is assumed to be non-negative on $[0,L]$ and a positive function almost everywhere on the interval. Therefore, we consider $q$ to be non-negative for any $t>0$ and to not be initiated in the laminar state. Under these assumptions, the solution of \eqref{eq:syst_q} is proven \cite[Chapter 7]{da2014stochastic} to be in the Hilbert space $L^2([0,L])$ almost surely for $t>0$. 
The scalar product of $L^2([0,L])$ is defined as $\langle\cdot,\cdot\rangle$ and the corresponding norm as $\lvert\lvert\cdot\rvert\rvert$. The norms in the other $L^p([0,L])$ spaces are indicated as $\lvert\lvert\cdot\rvert\rvert_p$. The scalar product in any other Hilbert space $\mathcal{X}$ is labeled as $\langle \cdot, \cdot \rangle_\mathcal{X}$.
\smallskip

The deterministic system, i.e., $\sigma_{\text{I}} = \sigma_{\text{S}} = \sigma_{\text{R}} = 0$, displays three steady states: $q_1\equiv0$, $q_2\equiv q_-$ and $q_3\equiv q_+$, for
\begin{align*}
    q_{\pm}= 1\pm \sqrt{\frac{r}{r+1}}\in[0,2] .
\end{align*}
The deterministically stable states $q_1$ and $q_3$ are identified as the laminar and the turbulent state, respectively. The state $q_2$ is a saddle for the deterministic model. Lastly, we define $\tau_J$ as the first time $t\geq0$, such that $\lvert\lvert q(\cdot,t) \rvert\rvert_\infty>J$. In Figure \ref{fig:Fig1}, we observe two examples of turbulence onset under It\^o white noise and Stratonovich white noise assumptions, respectively. In each case, the initial state lies below the saddle, $0<q_0<q_2$, and $\tau_{q_3}<T=10$. The initiation of turbulence is attributed to the growth of an $L^p([0,L])$-norm. The paths are obtained through the TAMS algorithm.

\begin{figure}[h!]
    \centering
    \subfloat[Trajectory $q$ solving \eqref{eq:syst_q}\\ with It\^o white noise\\ and describing turbulence onset\\ with respect to the $L^1{([0,L])}$-norm.]{\begin{overpic}[scale=0.30]{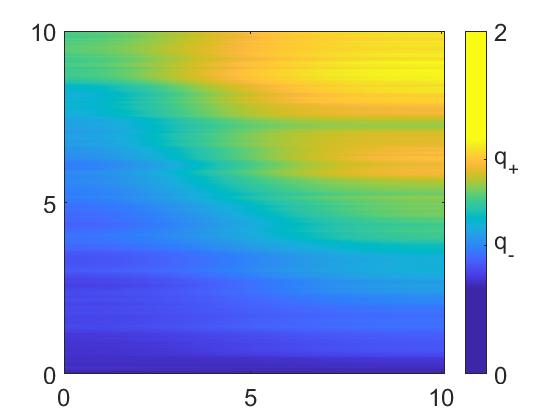}
    \put(-20,380){$t$}
    \put(530,20){$x$}
    \end{overpic}}
    \hspace{8mm}
    \subfloat[Trajectory $q$ solving \eqref{eq:syst_q}\\ with It\^o white noise\\ and describing turbulence onset\\ with respect to the $L^\infty{([0,L])}$-norm.]{\begin{overpic}[scale=0.30]{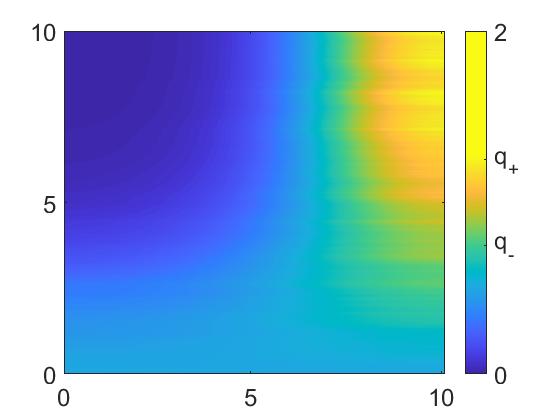}
    \put(-20,380){$t$}
    \put(530,20){$x$}
    \end{overpic}}
    
    \vspace{0mm}
    
    \subfloat[Trajectory $q$ solving \eqref{eq:syst_q}\\ with Stratonovich white noise\\ and describing turbulence onset\\ with respect to the $L^1{([0,L])}$-norm.]{\begin{overpic}[scale=0.30]{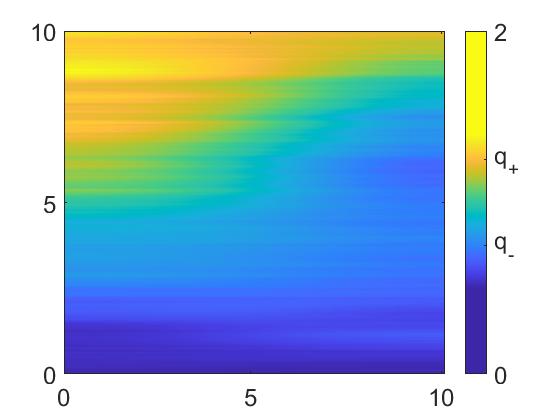}
    \put(-20,380){$t$}
    \put(530,20){$x$}
    \end{overpic}}
    \hspace{8mm}
    \subfloat[Trajectory $q$ solving \eqref{eq:syst_q}\\ with Stratonovich white noise\\ and describing turbulence onset\\ with respect to the $L^\infty{([0,L])}$-norm.]{\begin{overpic}[scale=0.30]{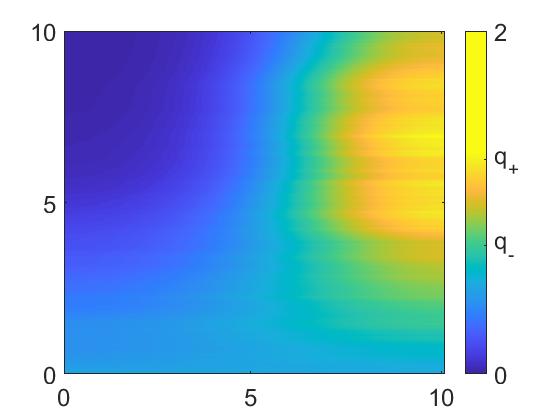}
    \put(-20,380){$t$}
    \put(530,20){$x$}
    \end{overpic}}
    
    \caption{$(a)$ and $(b)$ show trajectories of $q$, solution of \eqref{eq:syst_q}, indicating the onset of turbulence under It\^o noise, $0.5=\sigma_{\text{I}}>\sigma_{\text{S}}=\sigma_{\text{R}}=0$; whereas in $(c)$ and $(d)$ it is associated with Stratonovich noise, $0.5=\sigma_{\text{S}}>\sigma_{\text{R}}=\sigma_{\text{R}}=0$. The interpretation of turbulence initiation is associated with an $L^p([0,L])$-norm described below. \\
    We consider $\left\{e_i\right\}_{i\in\mathbb{N}}$ the normalized eigenfunctions of the Laplace operator on $[0,L]$ under Neumann boundary conditions. We assume the noise perturbation on the solution along $101$ modes, $b_i=e_i$ for $i\in\{0,\dots,100\}$, with intensity $\zeta_i=\text{exp}\left(-(i-1)^2\right)$. The initial solution is set at $q_0\equiv 0.5$. The Reynolds parameter is $r=\frac{1}{15}$, which implies that $q_-=0.75$ and $q_+=1.25$. We set $L=T=10$ and space and time step as $0.1$ and $0.01$, respectively. In $(a)$ and $(c)$ we observe the rise of $\lvert\lvert q \rvert\rvert_1$ to the value $q_+ L$, while in $(b)$ and $(d)$ we capture the rise of $\lvert\lvert q \rvert\rvert_\infty$ to the value $q_+$. These rare events are computed via the TAMS algorithm, for which we run $50$ simulations each and use the respective norm as a score function. The simulations are achieved through the discretized mild solution formula \cite{da2014stochastic}. As described in Appendix \ref{app:A}, the systems differ in view of the It\^o-Stratonovich correction term, which implies an additional heterogeneous positive drift term in the case of Stratonovich white noise.}
    \label{fig:Fig1}
\end{figure} 

In order to observe the behaviour of $q$ in the proximity of the laminar state, we study the mild solution $u_\alpha=u_\alpha(x,t)$ of the linear system with corresponding noise,
\begin{align} \label{eq:syst_gamma}
    \left\{
    \begin{alignedat}{2}
    \text{d} u_\alpha(x,t) = &\left( \partial_{xx}^2 u_\alpha(x,t) - \alpha u_\alpha(x,t) + \sigma_{\text{R}} u_\alpha(x,t) \circ F( \xi ) (x,t) \right) \text{d}t \\
    &+ \sigma_{\text{I}} u_\alpha(x,t) Q^{\frac{1}{2}} \text{d}W_t +  \sigma_{\text{S}} u_\alpha(x,t) \circ Q^{\frac{1}{2}} \text{d}W_t , \\
    \partial_x u_\alpha(0,t) = &\partial_x u_\alpha(L,t) = 0 , \\
    u_\alpha(x,0) = &q_0(x) ,
    \end{alignedat}
    \right.
\end{align}
for $\alpha=1$. We refer to the solution of the system following the noise assumptions, i.e., the values of $\sigma_{\text{I}},\sigma_{\text{S}},\sigma_{\text{R}}$: we denote by $u_\alpha^{\text{I}}=u_\alpha^{\text{I}}(x,t)$ the mild solution of \eqref{eq:syst_gamma} under It\^o white noise; $u_\alpha^{\text{S}}=u_\alpha^{\text{S}}(x,t)$ is the strong solution under Stratonovich white noise; $u_\alpha^{\text{R}}=u_\alpha^{\text{R}}(x,t)$ refers to the strong solution under Stratonovich red noise. We focus on the case for which $q\leq 2$, as $0\equiv q_1<q_2<q_3<2$. We define $\upsilon^{\text{S}}_J$ as the first time $t\geq0$, such that $\lvert\lvert u^{\text{S}}_\alpha(\cdot,t) \rvert\rvert_\infty>J$. Equivalently, $\upsilon^{\text{I}}_J$ is the first time $t$, such that $\lvert\lvert u^{\text{I}}_\alpha(\cdot,t) \rvert\rvert_\infty>J$. Therefore, we can prove the lemma to follow.

\begin{lm} \label{lm:q-u}
    We consider $q$, the mild solution of \eqref{eq:syst_q}, and $u_1$, the mild solution of \eqref{eq:syst_gamma} for $\alpha=1$, $x\in[0,L]$ and $t\in[0,T]$. Moreover, we assume that $q(x,t)\leq 2$ for any $x\in[0,L]$ and $t\in[0,T]$. Then, the conclusions are:
    \begin{enumerate}
        \item[(a)] the inequality $q(x,t) \geq u_1(x,t)$ holds under the same sample $W$ and for almost every $x\in[0,L]$ and $t\in[0,T]$;
        \item[(b)] for $J\leq 2$ and $\sigma_{\text{S}}>\sigma_{\text{I}}=0$, the inequality $\tau_{J} \leq \upsilon^{\text{S}}_{J}$ holds; 
        \item[(c)] for $J\leq 2$ and $\sigma_{\text{I}}>\sigma_{\text{S}}=0$, the inequality $\tau_{J} \leq \upsilon^{\text{I}}_{J}$ holds.
    \end{enumerate}
\end{lm}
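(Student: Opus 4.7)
The plan is to establish (a) as the core statement, from which (b) and (c) follow by taking $L^\infty$-norms.

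For (a), I would write both $q$ and $u_1$ in mild form with respect to the semigroup $\{S(t)\}_{t\geq 0}$ generated by $\partial_{xx}^2-I$ on $L^2([0,L])$ with homogeneous Neumann boundary conditions, driven by the same cylindrical Wiener process $W$ and the same Ornstein--Uhlenbeck process $\xi$. Setting $v:=q-u_1$, the two mild formulations subtract to give a linear SPDE for $v$ with zero initial datum, the same multiplicative noise structure as in \eqref{eq:syst_gamma}, and an additional deterministic forcing
\begin{align*}
    f(x,s)\;:=\;(r+1)\,q(x,s)^2\,(2-q(x,s)).
\end{align*}
The driving structural observation is that under the hypothesis $0\leq q\leq 2$ the forcing $f$ is pointwise non-negative, and the semigroup $S(t)$ is positivity-preserving; what remains is to show that the multiplicative, mean-zero stochastic terms cannot destroy non-negativity.

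The main technical step is precisely this positivity-preservation claim for the linear multiplicative SPDE satisfied by $v$. I would proceed via a stochastic integrating-factor argument: introduce a strictly positive exponential $\phi(x,t)$ built from $W$ and $\xi$ that cancels the noise coefficient in the equation for $v$, so that $\tilde v:=\phi\, v$ solves a random (pathwise) parabolic PDE whose generator is a perturbation of $\partial_{xx}^2-I$ by lower-order terms arising from the commutator of $\phi$ with the Laplacian, and whose source $\phi f$ is non-negative with zero initial data. The deterministic parabolic maximum principle then forces $\tilde v\geq 0$ pathwise, whence $v\geq 0$ almost surely. An alternative route is to apply the infinite-dimensional It\^o formula of Brze{\'z}niak--van Neerven--Veraar--Weis to $\lvert\lvert v^-\rvert\rvert^2$: the linearity of the stochastic coefficient in $v$ makes the diffusion part vanish on $\{v=0\}$, and a Gronwall estimate then closes the argument.

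Parts (b) and (c) follow by specialising the noise configuration. In each case $u_1$ reduces respectively to $u_1^{\text{S}}$ or $u_1^{\text{I}}$, and both $q$ and $u_1$ are non-negative (the latter because the linear SPDE with $f\equiv 0$ preserves positivity of the initial datum by the same argument). Part (a) then yields $\lvert\lvert q(\cdot,t)\rvert\rvert_\infty\geq\lvert\lvert u_1(\cdot,t)\rvert\rvert_\infty$ for every $t$, so the first hitting times satisfy $\tau_J\leq \upsilon_J^{\text{S}}$ and $\tau_J\leq \upsilon_J^{\text{I}}$ respectively; the restriction $J\leq 2$ guarantees that on $[0,\tau_J]$ the hypothesis $q\leq 2$ used in (a) remains available. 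The main obstacle is to justify the positivity-preservation claim rigorously: the multiplicative Gaussian noise is operator-valued and spatially inhomogeneous, so a naive Picard iteration fails (the stochastic integrals can be negative at intermediate steps), and one must carefully handle either the commutator between $\phi$ and $\partial_{xx}^2$ in the integrating-factor route, or the non-smoothness of $v\mapsto\lvert\lvert v^-\rvert\rvert^2$ in the It\^o-formula route; translating the Stratonovich terms into their It\^o equivalents (as in Appendix \ref{app:A}) is what makes both routes tractable.
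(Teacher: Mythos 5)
Your proposal follows essentially the same route as the paper: form the difference $\tilde u = q - u_1$, observe that under $q\le 2$ the residual forcing $(r+1)q^2(2-q)$ is non-negative while the remaining terms are multiplicative in $\tilde u$ and the Neumann semigroup preserves positivity, and then pass to the hitting times using $J\le 2$ to keep the hypothesis $q\le 2$ in force up to the relevant stopping time. The paper simply asserts the positivity-preservation step that you rightly flag as the main technical point and propose to justify via an integrating factor or an It\^o--Gronwall argument on $\lVert \tilde u^-\rVert^2$, so your write-up is, if anything, more detailed on that point.
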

\begin{proof}
    We study $\tilde{u}=q-u_1$ under the assumptions $\sigma_{\text{I}}\geq 0$, $\sigma_{\text{S}}\geq 0$ and $\sigma_{\text{R}}\geq 0$. 
    The difference $\tilde{u}$ is the mild solution of
    \begin{align*}
        \left\{
        \begin{alignedat}{2}
        \text{d} \tilde{u}(x,t) = &\left( \partial_{xx}^2 \tilde{u}(x,t)- \tilde{u}(x,t) + (r+1) q(x,t)^2 \left( 2 - q(x,t) \right) + \sigma_{\text{R}} \tilde{u}(x,t) \circ F( \xi ) (x,t) \right) \text{d}t \\
        &+ \sigma_{\text{I}} \tilde{u}(x,t) Q^{\frac{1}{2}} \text{d}W_t +  \sigma_{\text{S}} \tilde{u}(x,t) \circ Q^{\frac{1}{2}} \text{d}W_t , \\
        \partial_x \tilde{u}(0,t) = &\partial_x \tilde{u}(L,t) = 0 , \\
        \tilde{u}(x,0) \equiv &0 .
        \end{alignedat}
        \right.
    \end{align*}
    Therefore, it solves,
    \begin{align} \label{eq:mild_diff_1}
        \tilde{u}(x,t)= &(r+1) \int_0^t \txte^{(t-s) \left(\partial_{xx}^2-1\right)} \left( q(x,s)^2 \left( 2 - q(x,s) \right) \right) \text{d}s 
        + \sigma_{\text{R}} \int_0^t \txte^{(t-s) \left(\partial_{xx}^2-1\right)} \tilde{u}(x,s) \circ F( \xi ) (x,s) \text{d}s \\
        &+ \sigma_{\text{I}} \int_0^t \txte^{(t-s) \left(\partial_{xx}^2-1\right)} \tilde{u}(x,s) Q^{\frac{1}{2}} \text{d}W_s 
        + \sigma_{\text{S}} \int_0^t \txte^{(t-s) \left(\partial_{xx}^2-1\right)} \tilde{u}(x,s) \circ Q^{\frac{1}{2}} \text{d}W_s . \nonumber
    \end{align}
    Due to the Neumann boundary conditions, the continuous semigroup $\txte^{(t-s) \left(\partial_{xx}^2-1\right)}$ does not affect the sign of the argument function. Since the first term in the right-hand side of \eqref{eq:mild_diff_1} is positive and by the fact that the other terms are multiplicative in $\tilde{u}$, it follows that $q(x,t)\geq u^{\text{S}}_1(x,t)$ for any $x\in[0,L]$ and $t\in[0,T]$.
    \smallskip
    
    We set $\sigma_{\text{S}}>\sigma_{\text{I}}=0$. If $\upsilon^{\text{S}}_{J}\leq\tau_2$, then we obtain $\tau_{J}\leq\upsilon^{\text{S}}_{J}$. Conversely, for $\tau_2 \leq \upsilon^{\text{S}}_{J}$, it follows that $\tau_{J}\leq\upsilon^{\text{S}}_{J}$, since $\tau_{J}\leq\tau_2$. The It\^o noise perspective, i.e., case $(c)$, can be proven through equivalent reasoning. 
\end{proof}
The linearization enables the construction of the following results and further justifies the study of the fundamental solution of the cable equation. We shall denote the fundamendal solution by $G_\alpha$ and it solves the system

\begin{align} \label{eq:syst_G}
    \left\{\begin{alignedat}{2}
        \text{d} G_\alpha(x,y,t,[0,L]) &=
        \left( \partial_{xx}^2 - \alpha \right) G_\alpha(x,y,t,[0,L]) \text{d}t ,\\
        \partial_x G_\alpha(0,y,t,[0,L]) &= \partial_x G_\alpha(L,y,t,[0,L]) = 0 ,\\
        G_\alpha(x,y,0,[0,L])&=\delta_0(y-x) ,
    \end{alignedat}\right.
\end{align}
where $\delta_0$ is the Dirac delta. The solution $G_\alpha$ can be obtained through different methods \cite{walsh1986introduction}. For the purposes of this paper, we consider solely the form

\begin{align*}
    G_\alpha(x,y,t,[0,L]) 
    &= \txte^{-t \alpha}
    \left( \sum_{n=0}^\infty e_n(x) e_n(y) \txte^{-t \lambda_n} \right) , 
\end{align*}
for $\left\{e_i\right\}_{i\in\mathbb{N}}$ the eigenbasis of the Laplacian operator in $L^2([0,L])$ and for $\left\{-\lambda_i\right\}_{i\in\mathbb{N}}$ the corresponding eigenfunctions. Under Neumann boundary conditions, they are defined as

\begin{align*}
    \begin{alignedat}{2}
        e_0 &\equiv \frac{1}{\sqrt{L}} , &&  \\
        e_n(x) &= \sqrt{\frac{2}{L}} \text{cos}\left(\frac{n \pi x}{L}\right), && \; \text{for any $n\in\mathbb{N}_{>0}$} , \\
        \lambda_n &= \left(\frac{n \pi}{L}\right)^2 , &&\;\text{for any $n\in\mathbb{N}$} ,
    \end{alignedat}
\end{align*}
for $x\in[0,L]$. Finally, it is easy to observe that the fundamental solution satisfies
\begin{align}
    \label{eq:tools_G}
    \int_0^L G_\alpha(x,y,t,[0,L]) G_\alpha(y,z,s,[0,L]) \text{d} y = G_\alpha(x,z,t+s,[0,L]) 
    ,
\end{align}
for $t>0$, $s>0$ and $z\in[0,L]$. 

\section{It\^o noise: Countering the drift component} \label{sec:3}

In this section, we study white noise in It\^o sense, interpreted as the assumption $\sigma_{\text{I}}>\sigma_{\text{S}}=\sigma_{\text{R}}=0$ in \eqref{eq:syst_q} and in \eqref{eq:syst_gamma}. We assume $b_0=e_0$ and that $\zeta_0>0$. We consider $u^{\text{I}}_\alpha=u^{\text{I}}_\alpha(x,t)$, the mild solution of \eqref{eq:syst_gamma} for $\alpha>0$ as
\begin{align}       
    \label{eq:duhamel_form}
    u^{\text{I}}_{\alpha}(x,t) = \int_0^L G_\alpha(x,y,t,[0,L]) q_0(y) \text{d} y 
    + \sigma_{\text{I}} \int_0^L \int_0^t G_\alpha(x,y,t-s,[0,L]) u^{\text{I}}_{\alpha}(y,s) Q^{\frac{1}{2}} \text{d} W_s \text{d} y. 
\end{align}
For the fixed time $T>0$ and any $y\in[0,L]$, we define 
\begin{align*}
    \phi_\alpha(x,y,t,[0,L]):=G_\alpha(x,y,T-t,[0,L]),
\end{align*}
where $x\in[0,L]$ and $t\in[0,T]$. By construction of \eqref{eq:syst_G}, we have that $\phi_\alpha$ solves 
\begin{align*}
    \left\{\begin{alignedat}{2}
        \text{d} \phi_\alpha(x,y,t,[0,L]) &=
        \left(-\partial^2_{xx} + \alpha \right) \phi_\alpha(x,y,t,[0,L]) \text{d}t ,\\
        \partial_x \phi_\alpha(0,y,t,[0,L]) &= \partial_x \phi_\alpha(L,y,t,[0,L]) = 0 ,\\
        \phi_\alpha(x,y,T,[0,L])&=\delta_0(y-x) ,
    \end{alignedat}\right.
\end{align*}
for $x\in[0,L]$ and $t\in[0,T)$. In the following lemma, we use $\phi_\alpha$ to define an observable, which we prove to be a martingale. 

\begin{lm} \label{lm:inequality_quadratic_variation}
    For any $x\in[0,L]$ and $t \in [0,T]$, the process $M$ defined as
    \begin{align} \label{eq:construction_M}
    M(t)=\int_0^L \int_0^L \phi_\alpha(x,y,t,[0,L]) u^{\text{I}}_{\alpha} (y,t)~ \text{d} y \text{d} x
    = \txte^{-(T-t) \alpha} \lvert\lvert u^{\text{I}}_{\alpha} (\cdot,t) \rvert\rvert_1,
    \end{align}
    with $u^{\text{I}}_{\alpha}$ denoting the mild solution of \eqref{eq:syst_gamma}, is a non-negative $\mathcal{F}_t^{W}$-martingale. Its quadratic variation, $\left\langle M \right\rangle$, satisfies
    \begin{align} \label{eq:inequality_quadratic_variation}
        \left\langle M \right\rangle (t) \geq \zeta_0 \frac{\sigma_{\text{I}}^2}{L^2} \int_0^t M(s)^{2} ~\text{d} s,
    \end{align}
    for any $t \in [0,T]$.
\end{lm}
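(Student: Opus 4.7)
The plan is to exploit the time-reversed kernel $\phi_\alpha$ as a duality device: after integration in $x$, the drift produced by $u^{\text{I}}_\alpha$ and the $t$-derivative of the kernel cancel, leaving a purely stochastic evolution whose quadratic variation can then be lower-bounded by isolating the zeroth mode. First I would compute $\Phi_\alpha(y,t):=\int_0^L\phi_\alpha(x,y,t,[0,L])\,\text{d}x$. Using $\phi_\alpha(x,y,t,[0,L])=G_\alpha(x,y,T-t,[0,L])$ and the eigenfunction expansion of $G_\alpha$, every mode $n\geq 1$ drops out because $\int_0^L e_n(x)\,\text{d}x=0$, while the $n=0$ mode contributes $\text{e}^{-(T-t)\alpha}$. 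Hence $\Phi_\alpha(y,t)=\text{e}^{-(T-t)\alpha}$ independently of $y$; combined with the non-negativity of $u^{\text{I}}_\alpha$ (preserved by the linear multiplicative SPDE with $q_0\geq 0$), this immediately yields the asserted identity $M(t)=\text{e}^{-(T-t)\alpha}\|u^{\text{I}}_\alpha(\cdot,t)\|_1$.

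\textbf{Step 2 (martingale property).} Write $M(t)=\int_0^L\Phi_\alpha(y,t)\,u^{\text{I}}_\alpha(y,t)\,\text{d}y$ and apply the stochastic product rule (equivalently, the infinite-dimensional It\^o formula of \cite{brzezniak2008ito}), using the SPDE for $u^{\text{I}}_\alpha$. The term generated by $\partial_t\Phi_\alpha=\alpha\Phi_\alpha$ cancels exactly the drift contribution $-\alpha\Phi_\alpha u^{\text{I}}_\alpha$ inherited from the equation. Because $\Phi_\alpha$ is independent of $y$, integration by parts together with the Neumann boundary conditions on $u^{\text{I}}_\alpha$ gives $\int_0^L\Phi_\alpha\,\partial_{yy}^2 u^{\text{I}}_\alpha\,\text{d}y=0$. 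What remains is a pure stochastic integral, so $M$ is a non-negative local martingale; a standard $L^2$-moment bound on $u^{\text{I}}_\alpha$ over $[0,T]$ (available since the noise is linear in $u^{\text{I}}_\alpha$ and $Q$ satisfies the trace-class / controlled-basis hypothesis recalled in Section~\ref{sec:2}) upgrades it to a genuine $\mathcal{F}_t^{W}$-martingale.

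\textbf{Step 3 (lower bound on $\langle M\rangle$).} From the cylindrical expansion of $Q^{\frac{1}{2}}W$ and the It\^o isometry, the computation from Step 2 produces
\begin{align*}
    \text{d}\langle M\rangle(t) = \sigma_{\text{I}}^2\,\text{e}^{-2(T-t)\alpha}\sum_{i=0}^{\infty}\zeta_i\,\langle u^{\text{I}}_\alpha(\cdot,t),b_i\rangle^2\,\text{d}t.
\end{align*}
Every summand is non-negative, so retaining only the $i=0$ term already produces a lower bound. Using $b_0=e_0\equiv 1/\sqrt{L}$ and the non-negativity of $u^{\text{I}}_\alpha$, one has $\langle u^{\text{I}}_\alpha(\cdot,t),b_0\rangle^2=\|u^{\text{I}}_\alpha(\cdot,t)\|_1^2/L$, and substituting the identity from Step 1 in the form $\text{e}^{-2(T-t)\alpha}\|u^{\text{I}}_\alpha(\cdot,t)\|_1^2=M(t)^2$ delivers, after integration in $t$, an inequality of the form claimed.

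\textbf{Main obstacle.} The algebraic cancellations are clean and the mode truncation is transparent; the only real subtlety is the local-to-true martingale upgrade in Step 2, namely verifying $\mathbb{E}\langle M\rangle(T)<\infty$. This reduces to an $L^2$-moment estimate for the mild solution $u^{\text{I}}_\alpha$ over $[0,T]$, and needs some care because the bound must be uniform over all modes $b_i$ of the $Q$-Wiener process; this is precisely where the assumptions on $Q$ stated in Section~\ref{sec:2} are used.
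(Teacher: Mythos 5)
Your proof is correct and rests on the same two ideas as the paper's --- the time-reversed kernel $\phi_\alpha$ as a duality device that cancels the drift, and the isolation of the constant mode $b_0=e_0$ to bound the quadratic variation from below --- but the execution differs at both steps. For the martingale property the paper never invokes It\^o's formula: it substitutes the mild representation \eqref{eq:duhamel_form} into \eqref{eq:construction_M} and uses the semigroup identity \eqref{eq:tools_G} to collapse $M(t)$ into a constant plus a stochastic integral whose integrand depends only on $s$, not on $t$. Your differential version (product rule, cancellation of $\partial_t\Phi_\alpha=\alpha\Phi_\alpha$ against the drift, Neumann integration by parts) is the same duality in infinitesimal form, but integrating $\partial_{yy}^2 u^{\text{I}}_\alpha$ by parts presupposes spatial regularity that a mild solution need not possess; the Duhamel route puts all derivatives on the smooth kernel and sidesteps this. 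On the other hand you are more careful than the paper about the local-to-true martingale upgrade via $\mathbb{E}\langle M\rangle(T)<\infty$, which the paper leaves implicit. For the quadratic variation the paper applies Jensen's inequality to the double integral over $[0,L]^2$ with the uniform probability measure and lands exactly on the constant $\zeta_0\sigma_{\text{I}}^2/L^2$; your modal truncation instead gives
\begin{align*}
\text{d}\langle M\rangle(t)\;\geq\;\sigma_{\text{I}}^2\,\txte^{-2(T-t)\alpha}\,\zeta_0\,\langle u^{\text{I}}_\alpha(\cdot,t),e_0\rangle^2\,\text{d}t\;=\;\zeta_0\,\frac{\sigma_{\text{I}}^2}{L}\,M(t)^2\,\text{d}t ,
\end{align*}
i.e.\ the constant $\zeta_0\sigma_{\text{I}}^2/L$. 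This is the sharper and more natural bound, and it implies \eqref{eq:inequality_quadratic_variation} whenever $L\geq 1$; but ``an inequality of the form claimed'' is not quite the inequality claimed --- for $L<1$ your constant $L^{-1}$ is smaller than the stated $L^{-2}$, so you should either state your bound with its own (better) constant or record the comparison $L^{-1}\geq L^{-2}$ for $L\geq1$ explicitly rather than glossing over it.
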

\begin{proof}
    In the first part of the proof, we follow a similar approach to \cite[Lemma 2.3]{mueller1993blowup} to prove that $M$ is a $\mathcal{F}_t^{W}$-martingale. From its construction in \eqref{eq:construction_M}, we consider the mild solution form of $u^{\text{I}}_{\alpha}$ in \eqref{eq:duhamel_form}. Then, we employ property \eqref{eq:tools_G} to obtain
    \begin{align} \label{eq:martingale_M_0}
        M(t)= \int_0^L \int_0^L \phi_\alpha(x,y,0,[0,L]) q_0(y) \text{d} y \text{d} x
        + \sigma_{\text{I}} \int_0^L \int_0^L \int_0^t \phi_\alpha(x,y,s,[0,L]) u^{\text{I}}_{\alpha}(y,s) Q^{\frac{1}{2}} \text{d} W_s \text{d} y \text{d} x .
    \end{align}
    The observable $M$ is an $\mathcal{F}_t^{W}$-martingale since it is the sum of a constant and an $\mathcal{F}_t^{W}$-martingale, which is a stochastic integral with integrand independent of $t$. Its quadratic variation is
    \begin{align} \label{eq:martingale_M}
        \left\langle M \right\rangle (t)= \sigma_{\text{I}}^2 \int_0^L \int_0^L \int_0^t \left( Q^{\frac{1}{2}} \left( \phi_\alpha(x,y,s,[0,L]) u^{\text{I}}_{\alpha}(y,s) \right) \right)^2 \text{d} s \text{d} y \text{d} x .
    \end{align}
    Inequality \eqref{eq:inequality_quadratic_variation} follows from similar steps to those included in the proof of \cite[Lemma 2]{mueller2000critical}, which we adapt here to our setting. Through Jensen's inequality, we obtain
    \begin{align*}
        &\sigma_{\text{I}}^2 \int_0^L \int_0^L \left( Q^{\frac{1}{2}} \left( \phi_\alpha(x,y,s,[0,L]) u^{\text{I}}_{\alpha}(y,s) \right) \right)^2 \text{d}y \text{d}x \\
        &= \sigma_{\text{I}}^2 L^2 \int_0^L \int_0^L \left( Q^{\frac{1}{2}} \left( \phi_\alpha(x,y,s,[0,L]) u^{\text{I}}_{\alpha}(y,s) \right) \right)^2 \frac{1}{L^2} \text{d}y \text{d}x \\
        &\geq \sigma_{\text{I}}^2 L^2 \left( \int_0^L \int_0^L Q^{\frac{1}{2}} \left( \phi_\alpha(x,y,s,[0,L]) u^{\text{I}}_{\alpha}(y,s) \right) \frac{1}{L^2} \text{d}y \text{d}x \right)^{2} \\
        &= \sigma_{\text{I}}^2 L^2 \left( \zeta_0^\frac{1}{2} \int_0^L \int_0^L \left( \phi_\alpha(x,y,s,[0,L]) u^{\text{I}}_{\alpha}(y,s) \right) \frac{1}{L^2} \text{d}y \text{d}x \right)^{2} = \zeta_0 \frac{\sigma_{\text{I}}^2}{L^2} M(s)^2 .
    \end{align*}
    The proof is concluded by integrating of the left-hand side and right-hand side over the time interval $s\in[0,t]$, for $t\in[0,T]$.
\end{proof}

As indicated in \eqref{eq:construction_M}, the observable $M$ is equivalent to the $L^1([0,L])$-norm of $u^{\text{I}}_\alpha$ rescaled in time through an exponential weight. The weight increases with time $t$, and its effect balances the dissipative term in the cable equation in \eqref{eq:syst_gamma} for $\alpha>0$. Consequently, the integrands in \eqref{eq:martingale_M_0} do not depend on $t$, and $M$ is a martingale. The simplicity of the formula is a consequence of the fact that $\lambda_0=0$ and that the corresponding eigenfunction $e_0$ is constant.
\smallskip

We set $J_1=\frac{\lvert\lvert q_0 \rvert\rvert_1}{L}$ and $J_0 \txte^{-T \alpha} < J_1 \txte^{-T \alpha} < J_1 < J_2$. From construction, we obtain $M(0)=L J_1 \txte^{-T \alpha}$. The values $J_0,J_1$ and $J_2$ are chosen to guarantee the correctness of the studied inequalities upon rescaling of $L,\sigma_{\text{I}},\alpha$ and $T$. Lastly, we define the stopping time $\tau$ as the first time in which $M(t)$ assumes the values $L J_0 \txte^{-T \alpha}$ or $L J_2$.

\begin{lm} \label{lm:galton-watson}
    For any $\alpha>0$, $\sigma_{\text{I}}>0$, $T>0$, we set $J_0,J_1$ and $J_2$ as defined above. Then, the following result holds:
    \begin{align} \label{eq:gw_unrestrained}
        \mathbb{P}\left( M(\tau) = L J_2 \right) = \frac{J_1-J_0}{J_2 \txte^{T \alpha} -J_0} .
    \end{align}
    Furthermore, we obtain the bound
    \begin{align} \label{eq:gw_restrained}
        \mathbb{P}\left( M(\tau \wedge T ) = L J_2 \right) 
        \geq \frac{J_1-J_0}{J_2 \txte^{T \alpha} -J_0} - \left(\frac{J_2}{J_0} \txte^{T \alpha} - \frac{J_1}{J_0} \right) \frac{L}{\zeta_0^\frac{1}{2} \sigma_{\text{I}}} T^{-\frac{1}{2}} ,
    \end{align}
    for $\tau \wedge T = \text{min} \left\{ \tau, T \right\}$.
\end{lm}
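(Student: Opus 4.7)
The plan is to apply the optional stopping theorem to the non-negative continuous martingale $M$ of Lemma \ref{lm:inequality_quadratic_variation}, and then to turn the quadratic-variation lower bound \eqref{eq:inequality_quadratic_variation} into an estimate on $\mathbb{P}(\tau > T)$ via a Dubins--Schwarz time-change.

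For part (a), I would first verify that $\tau < \infty$ almost surely. On the event $\{\tau > t\}$ the martingale $M$ stays in $[L J_0 \txte^{-T\alpha}, L J_2]$, so \eqref{eq:inequality_quadratic_variation} yields $\langle M \rangle(t\wedge\tau) \geq \zeta_0 \sigma_{\text{I}}^2 J_0^2 \txte^{-2T\alpha}(t\wedge\tau)$. Since $M$ stopped at $\tau$ is bounded by $L J_2$, the identity $\mathbb{E}[\langle M \rangle(t\wedge\tau)] = \mathbb{E}[M(t\wedge\tau)^2] - M(0)^2 \leq L^2 J_2^2$ forces $\mathbb{P}(\tau > t) \to 0$. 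Optional stopping then gives $M(0) = \mathbb{E}[M(\tau)]$, and since $M(\tau) \in \{L J_0 \txte^{-T\alpha}, L J_2\}$ and $M(0) = L J_1 \txte^{-T\alpha}$, solving the resulting two-point equation produces \eqref{eq:gw_unrestrained}.

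For part (b), I would apply optional stopping at the bounded time $\tau \wedge T$ and split the expectation according to the three disjoint events $A = \{\tau \leq T,\, M(\tau) = L J_2\}$, $B = \{\tau \leq T,\, M(\tau) = L J_0 \txte^{-T\alpha}\}$, and $C = \{\tau > T\}$. Using $M(T) \leq L J_2$ on $C$ together with $\mathbb{P}(B) = 1 - \mathbb{P}(A) - \mathbb{P}(C)$, a short rearrangement of $M(0) = \mathbb{E}[M(\tau \wedge T)]$ gives
\begin{align*}
  \mathbb{P}(A) \geq \frac{J_1 - J_0}{J_2 \txte^{T\alpha} - J_0} - \mathbb{P}(C),
\end{align*}
so it suffices to dominate $\mathbb{P}(C)$ by the second term on the right-hand side of \eqref{eq:gw_restrained}.

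The bound on $\mathbb{P}(C)$ would come from the Dubins--Schwarz theorem: continuity of $M$ (inherited from the stochastic-integral representation \eqref{eq:martingale_M_0}) produces a standard Brownian motion $B$ with $M(t) - M(0) = B(\langle M \rangle(t))$. On $C$ the process $M$ stays in $[L J_0 \txte^{-T\alpha}, L J_2]$ for all $t \in [0,T]$ and, by \eqref{eq:inequality_quadratic_variation}, $\langle M \rangle(T) \geq c := \zeta_0 \sigma_{\text{I}}^2 J_0^2 \txte^{-2T\alpha} T$; consequently $B(s)$ stays below $b := L(J_2 - J_1 \txte^{-T\alpha})$ for all $s \in [0,c]$. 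The reflection principle then yields
\begin{align*}
  \mathbb{P}(C) \leq \mathbb{P}\!\left(\sup_{0 \leq s \leq c} B(s) \leq b \right) = 2\Phi(b/\sqrt{c}) - 1 \leq b/\sqrt{c},
\end{align*}
and inserting the values of $b$ and $c$ collapses the right-hand side to exactly $\bigl(\tfrac{J_2}{J_0}\txte^{T\alpha} - \tfrac{J_1}{J_0}\bigr)\tfrac{L}{\zeta_0^{1/2} \sigma_{\text{I}}} T^{-1/2}$. The main obstacle is the bookkeeping that aligns the exponential weights $\txte^{\pm T\alpha}$ in $b$, $c$ and $M(0)$ so that the ratio $b/\sqrt{c}$ reduces cleanly to the coefficient appearing in \eqref{eq:gw_restrained}; a secondary but routine point is verifying the continuity of $M$ needed for Dubins--Schwarz, which follows from the stochastic-integral form in Lemma \ref{lm:inequality_quadratic_variation}.
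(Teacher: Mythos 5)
Your proposal is correct and takes essentially the same route as the paper's proof: optional stopping plus the two-point distribution of $M(\tau)$ for \eqref{eq:gw_unrestrained}, then the Dubins--Schwarz time change, the quadratic-variation lower bound \eqref{eq:inequality_quadratic_variation}, and the reflection principle to dominate $\mathbb{P}(\tau>T)$ by $b/\sqrt{c}$, which reduces to the stated coefficient. Your explicit check that $\tau<\infty$ almost surely via $\mathbb{E}\left[\langle M\rangle(t\wedge\tau)\right]\leq L^2 J_2^2$ is a small addition that the paper leaves implicit, but otherwise the arguments coincide.
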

\begin{proof}
    Since $M$ is a martingale, we can apply Doob's optional stopping theorem to obtain that $\mathbb{E}(M(\tau))=\mathbb{E}(M(0))= L J_1 \txte^{-T \alpha}$. The definition of the stopping time $\tau$ implies that
    \begin{align*}
        \mathbb{E}(M(\tau)) = L J_2 \mathbb{P}\left( M(\tau) = L J_2 \right) + L J_0 \txte^{-T \alpha} \left( 1- \mathbb{P}\left( M(\tau) = L J_2 \right) \right) ,
    \end{align*}
    from which equality \eqref{eq:gw_unrestrained} follows. The Dubins-Schwarz theorem and the fact that $M$ is a continuous martingale entail that
    \begin{align*}
        M(t) = L J_1 \txte^{-T \alpha} + \tilde{W} \left( \left\langle M \right\rangle (t) \right) ,
    \end{align*}
    for some scalar Wiener process $\tilde{W}(t)$ and any $t\in[0,T]$. Setting the martingale in such a form implies
    \begin{align} \label{eq:long_split_1}
        \mathbb{P} \left( T<\tau \right) 
            & = \mathbb{P} \left( T<t, L J_0 \txte^{-T \alpha} < M(t) < L J_2, \text{for } t\in[0,T] \right) \nonumber\\
            & \leq \mathbb{P} \left( T<t, M(t) < L J_2, \text{for } t\in[0,T] \right)\\
            & = \mathbb{P} \left( T<t, L J_1 \txte^{-T \alpha} + \tilde{W} \left( \left\langle M \right\rangle (t) \right) < L J_2, \text{ for } t\in[0,T] \right) . \nonumber
    \end{align}
    In the case $t<\tau$, it follows from \eqref{eq:inequality_quadratic_variation} that
    \begin{align*}
        \left\langle M \right\rangle (t) \geq \zeta_0 \sigma_{\text{I}}^2 J_0^2 \txte^{-2 T \alpha} t
    \end{align*}
    and, from \eqref{eq:long_split_1}, that
    \begin{align*}
        \mathbb{P} \left( T<\tau \right) 
        & \leq \mathbb{P} \left( T<t, L J_1 \txte^{- T \alpha} + \tilde{W} \left( \left\langle M \right\rangle (t) \right) < L J_2, \text{ for } t\in[0,T] \right) \nonumber\\
        & \leq \mathbb{P} \left( T<t, L J_1 \txte^{- T \alpha} + \tilde{W} \left( t \right) < L J_2, \text{ for } t\in\left[0, \left\langle M \right\rangle (T)\right] \right)\\
        & \leq \mathbb{P} \left( T<t, L J_1 \txte^{- T \alpha} + \tilde{W} \left( t \right) < L J_2, \text{ for } t\in\left[0, \zeta_0 \sigma_{\text{I}}^2 J_0^2 \txte^{-2 T \alpha} T\right] \right) . \nonumber 
    \end{align*}
    Subsequently, we get
    \begin{align*}
        \mathbb{P} \left( T<\tau \right) 
        & \leq \mathbb{P} \left( T<t, L J_1 \txte^{- T \alpha} + \tilde{W} \left( t \right) < L J_2, \text{ for } t\in\left[0, \zeta_0 \sigma_{\text{I}}^2 J_0^2 \txte^{-2 T \alpha} T\right] \right)\\
        & \leq \mathbb{P} \left( \underset{t\in\left[0, \zeta_0 \sigma_{\text{I}}^2 J_0^2 \txte^{-2 T \alpha} T\right]}{\text{sup}} \tilde{W} \left( t \right) < L J_2 - L J_1 \txte^{- T \alpha} \right)\\
        & = 1 - \mathbb{P} \left( \underset{t\in\left[0, \zeta_0 \sigma_{\text{I}}^2 J_0^2 \txte^{-2 T \alpha} T\right]}{\text{sup}} \tilde{W} \left( t \right) \geq L J_2 - L J_1 \txte^{- T \alpha} \right) .
    \end{align*}
    Then, through the reflection principle, we obtain
    \begin{align} \label{eq:tail_of_splitted_eq}
        \mathbb{P} \left( T<\tau \right) 
        & \leq 1 - \mathbb{P} \left( \underset{t\in\left[0, \zeta_0 \sigma_{\text{I}}^2 J_0^2 \txte^{-2 T \alpha} T\right]}{\text{sup}} \tilde{W} \left( t \right) \geq L J_2 - L J_1 \txte^{- T \alpha} \right) \nonumber\\
        & = 1 - 2 \mathbb{P} \left( \tilde{W} \left( \zeta_0 \sigma_{\text{I}}^2 J_0^2 \txte^{-2 T \alpha} T \right) \geq L J_2 - L J_1 \txte^{- T \alpha} \right) \nonumber\\
        & = \mathbb{P} \left( \left\lvert \tilde{W} \left( \zeta_0 \sigma_{\text{I}}^2 J_0^2 \txte^{-2 T \alpha} T \right) \right\rvert \leq L J_2 - L J_1 \txte^{- T \alpha} \right)\\
        & = \left( 2\pi \zeta_0 \sigma_{\text{I}}^2 J_0^2 \txte^{-2 T \alpha} T \right)^{-\frac{1}{2}} \int_{- L J_2 + L J_1 \txte^{-T \alpha}}^{ L J_2 - L J_1 \txte^{-T \alpha}} \text{exp} \left( - \frac{x^2}{2 \zeta_0 \sigma_{\text{I}}^2 J_0^2 \txte^{-2 T \alpha} T} \right) \text{d} x \nonumber\\
        & \leq \left(\frac{J_2}{J_0} \txte^{ T \alpha} - \frac{J_1}{J_0} \right) \frac{L}{\zeta_0^\frac{1}{2} \sigma_{\text{I}}} T^{-\frac{1}{2}} . \nonumber
    \end{align}
    From \eqref{eq:gw_unrestrained} and \eqref{eq:tail_of_splitted_eq}, it follows that
    \begin{align*}
        \mathbb{P}\left( M\left(\tau \wedge T\right) = L J_2 \right) 
        & = \mathbb{P}\left( M\left(\tau \wedge T\right) = L J_2, T\geq \tau \right) \\
        & = \mathbb{P}\left( M\left(\tau \right) = L J_2 \right) - \mathbb{P}\left( M\left(\tau \right) = L J_2, T < \tau \right) \\
        & \geq \mathbb{P}\left( M\left(\tau \right) = L J_2 \right) - \mathbb{P}\left( T < \tau \right) \\
        & \geq \frac{J_1-J_0}{J_2 \txte^{T \alpha} -J_0} - \left(\frac{J_2}{J_0} \txte^{T \alpha} - \frac{J_1}{J_0} \right) \frac{L}{\zeta_0^\frac{1}{2} \sigma_{\text{I}}} T^{-\frac{1}{2}} ,
    \end{align*}
    which concludes the proof.
\end{proof}

The inequality \eqref{eq:gw_restrained} provides a tool to establish a lower bound to the probability of rise of the $L^1(0,L)$-norm of $u^{\text{I}}_{\alpha}$. In fact, under the assumption that $L J_2 = M\left( \tau \wedge T \right)$, we can obtain the following:

\begin{align*}
    L J_2 = M\left( \tau \wedge T \right) 
    = \text{exp}\left( -\alpha \left( T - \tau \wedge T \right) \right) \left\lvert\left\lvert u^{\text{I}}_{\alpha} (\cdot,\tau \wedge T) \right\rvert\right\rvert_1 
    \leq \underset{0 \leq t \leq T}{\text{sup}}\left\lvert\left\lvert u^{\text{I}}_{\alpha} (\cdot, t) \right\rvert\right\rvert_1 .
\end{align*}
This entails that
\begin{align} \label{eq:tool_norm_1}
    \frac{J_1-J_0}{J_2 \txte^{T \alpha} -J_0} - \left(\frac{J_2}{J_0} \txte^{T \alpha} - \frac{J_1}{J_0} \right) \frac{L}{\zeta_0^\frac{1}{2} \sigma_{\text{I}}} T^{-\frac{1}{2}} 
    \leq \mathbb{P} \left( L J_2 \leq \underset{0 \leq t \leq T}{\text{sup}}\left\lvert\left\lvert u^{\text{I}}_\alpha (\cdot, t) \right\rvert\right\rvert_1 \right) ,
\end{align}
which yields a further step towards the construction of a lower bound to the probability of the onset of turbulence. In fact, such a bound is carried over to the mild solution of \eqref{eq:syst_q} in the next corollary. Since the left-hand side can be negative for large values of $T$, the assumptions $\alpha \ll 1$ or $L \ll \sqrt{\zeta_0}\sigma_{\text{I}}$ have to be enforced to ensure positivity for large intervals in time.

\begin{cor} \label{cor:mueller}
    \begin{enumerate}
        \item[(a)] We assume that $q$, the mild solution of \eqref{eq:syst_q} with It\^o noise, satisfies $0<q(x,t)\leq 2$ for any $x\in[0,L]$ and $t\in[0,T]$. Furthermore, we set the initial conditions such that $0< J_0 \txte^{-T \alpha} < \frac{\lvert\lvert q_0 \rvert\rvert_1}{L} \txte^{-T \alpha} < \frac{\lvert\lvert q_0 \rvert\rvert_1}{L} < J_2$. Then we get
        \begin{align*}
            \frac{J_1-J_0}{J_2 \txte^{T} -J_0} - \left(\frac{J_2}{J_0} \txte^{T} - \frac{J_1}{J_0} \right) \frac{L}{\zeta_0^\frac{1}{2} \sigma_{\text{I}}} T^{-\frac{1}{2}} \leq \mathbb{P} \left( L J_2 \leq \underset{0 \leq t \leq T}{\text{sup}}\left\lvert\left\lvert q (\cdot, t) \right\rvert\right\rvert_1 \right) .
        \end{align*}
        \item[(b)] We consider $q$, the mild solution of \eqref{eq:syst_q} with It\^o noise that satisfies $0< J_0 \txte^{-T \alpha} < \frac{\lvert\lvert q_0 \rvert\rvert_1}{L} \txte^{-T \alpha} < \frac{\lvert\lvert q_0 \rvert\rvert_1}{L} < J_2 < 2$. The following inequality holds:
        \begin{align} \label{eq:in_nom_infty_1}
        \underset{0 \leq t \leq T}{\text{sup}} \left( \frac{J_1-J_0}{J_2 \txte^{t} -J_0} - \left(\frac{J_2}{J_0} \txte^{t} - \frac{J_1}{J_0} \right) \frac{L}{\zeta_0^\frac{1}{2} \sigma_{\text{I}}} t^{-\frac{1}{2}} 
        \right)
        \leq \mathbb{P} \left( J_2 \leq \underset{0 \leq t \leq T}{\text{sup}}\left\lvert\left\lvert q (\cdot, t) \right\rvert\right\rvert_\infty \right) .
        \end{align}
    \end{enumerate}
\end{cor}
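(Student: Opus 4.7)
The plan is to combine the pathwise comparison of Lemma \ref{lm:q-u} with the martingale lower bound \eqref{eq:tool_norm_1}, specialised to $\alpha = 1$, which is the dissipation coefficient arising in the linearisation of \eqref{eq:syst_q} about the laminar state. All the heavy probabilistic work has already been done in Lemmas \ref{lm:inequality_quadratic_variation} and \ref{lm:galton-watson}; the corollary is essentially a packaging step that transfers the martingale bound from the linearised mild solution $u_1^{\text{I}}$ to the full solution $q$.

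For part $(a)$, I would invoke Lemma \ref{lm:q-u}$(a)$: under the hypothesis $0 < q(x,t) \leq 2$, we have $q(x,t) \geq u_1^{\text{I}}(x,t)$ almost everywhere on $[0,L] \times [0,T]$ under the same realisation of $W$. Integrating in $x$ and taking the supremum in $t$ yields $\underset{0 \leq t \leq T}{\text{sup}} \lvert\lvert q(\cdot,t) \rvert\rvert_1 \geq \underset{0 \leq t \leq T}{\text{sup}} \lvert\lvert u_1^{\text{I}}(\cdot,t) \rvert\rvert_1$, so monotonicity of $\mathbb{P}$ together with \eqref{eq:tool_norm_1} at $\alpha = 1$ delivers the claimed lower bound directly.

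For part $(b)$, the global bound $q \leq 2$ is no longer imposed, but the strict inequality $J_2 < 2$ is precisely what enables Lemma \ref{lm:q-u}$(c)$ to be applied with $J = J_2$, yielding the pathwise stopping-time comparison $\tau_{J_2} \leq \upsilon_{J_2}^{\text{I}}$. For every fixed $t \in [0,T]$ this gives the event inclusion $\{\upsilon_{J_2}^{\text{I}} \leq t\} \subseteq \{\tau_{J_2} \leq t\}$, so
\begin{align*}
    \mathbb{P}\left( J_2 \leq \underset{0 \leq s \leq t}{\text{sup}} \lvert\lvert q(\cdot,s) \rvert\rvert_\infty \right)
    &\geq \mathbb{P}\left( J_2 \leq \underset{0 \leq s \leq t}{\text{sup}} \lvert\lvert u_1^{\text{I}}(\cdot,s) \rvert\rvert_\infty \right) \\
    &\geq \mathbb{P}\left( L J_2 \leq \underset{0 \leq s \leq t}{\text{sup}} \lvert\lvert u_1^{\text{I}}(\cdot,s) \rvert\rvert_1 \right),
\end{align*}
where the second step uses the elementary norm inequality $\lvert\lvert u_1^{\text{I}} \rvert\rvert_1 \leq L \lvert\lvert u_1^{\text{I}} \rvert\rvert_\infty$. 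Applying \eqref{eq:tool_norm_1} on $[0,t]$ with $\alpha = 1$ bounds the last probability from below by the expression that appears inside the supremum on the left-hand side of \eqref{eq:in_nom_infty_1}. Since the map $t \mapsto \underset{0 \leq s \leq t}{\text{sup}} \lvert\lvert q(\cdot,s) \rvert\rvert_\infty$ is non-decreasing, each such $t$-dependent lower bound also bounds the probability $\mathbb{P}( J_2 \leq \underset{0 \leq s \leq T}{\text{sup}} \lvert\lvert q(\cdot,s) \rvert\rvert_\infty )$ from below; taking the supremum over $t \in [0,T]$ then selects the sharpest such estimate and produces \eqref{eq:in_nom_infty_1}.

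The only real subtlety is the distinction between the two parts: the $L^1$ estimate requires the global comparison $q \geq u_1^{\text{I}}$ and hence the hypothesis $q \leq 2$ in $(a)$, whereas the $L^\infty$ estimate in $(b)$ proceeds through the stopping-time comparison of Lemma \ref{lm:q-u}$(c)$, for which $J_2 < 2$ already suffices, so no pointwise control on $q$ is needed and the extra flexibility of optimising over $t$ is available.
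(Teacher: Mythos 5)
Your proposal is correct and follows essentially the same route as the paper: part $(a)$ is the pathwise comparison of Lemma \ref{lm:q-u}$(a)$ fed into \eqref{eq:tool_norm_1} with $\alpha=1$, and part $(b)$ chains \eqref{eq:tool_norm_1}, the norm inequality $\lvert\lvert \cdot \rvert\rvert_1 \leq L \lvert\lvert \cdot \rvert\rvert_\infty$ (Hölder), and the stopping-time comparison $\tau_{J_2}\leq \upsilon^{\text{I}}_{J_2}$ of Lemma \ref{lm:q-u}$(c)$, before optimising over the time horizon via monotonicity of $t\mapsto \sup_{0\leq s\leq t}\lvert\lvert q(\cdot,s)\rvert\rvert_\infty$. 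Your closing observation about why $(b)$ dispenses with the global bound $q\leq 2$ matches the remark the authors make immediately after the corollary.
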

\begin{proof}
    The first statement follows directly from \eqref{eq:tool_norm_1} for $\alpha=1$ and Lemma \ref{lm:q-u} $(a)$. In fact, since $q(x,t)\leq 2$ for all $x\in[0,L]$ and $t\in[0,T]$, we obtain
    \begin{align*}
        \mathbb{P} \left( L J_2 \leq \underset{0 \leq t \leq T}{\text{sup}}\left\lvert\left\lvert u^{\text{I}}_1 (\cdot, t) \right\rvert\right\rvert_1 \right)
        \leq \mathbb{P} \left( L J_2 \leq \underset{0 \leq t \leq T}{\text{sup}}\left\lvert\left\lvert q (\cdot, t) \right\rvert\right\rvert_1 \right) .
    \end{align*}
    Similarly to the latter case, \eqref{eq:tool_norm_1} for $\alpha=1$, H\"older's inequality and Lemma \ref{lm:q-u} $(c)$ imply that
    \begin{align*}
        \frac{J_1-J_0}{J_2 \txte^{T} -J_0} - \left(\frac{J_2}{J_0} \txte^{T} - \frac{J_1}{J_0} \right) \frac{L}{\zeta_0^\frac{1}{2} \sigma_{\text{I}}} T^{-\frac{1}{2}} 
        &\leq \mathbb{P} \left( L J_2 \leq \underset{0 \leq t \leq T}{\text{sup}}\left\lvert\left\lvert u^{\text{I}}_1 (\cdot, t) \right\rvert\right\rvert_1 \right) \\
        &\leq \mathbb{P} \left( J_2 \leq \underset{0 \leq t \leq T}{\text{sup}}\left\lvert\left\lvert u^{\text{I}}_1 (\cdot, t) \right\rvert\right\rvert_\infty \right)
        = \mathbb{P} \left( \upsilon^{\text{I}}_{J_2} \leq T \right) \\
        &\leq \mathbb{P} \left( \tau_{J_2} \leq T \right) 
        = \mathbb{P} \left( J_2 \leq \underset{0 \leq t \leq T}{\text{sup}}\left\lvert\left\lvert q (\cdot, t) \right\rvert\right\rvert_\infty \right)
        .
    \end{align*}
    Lastly, we obtain that, for any $T_0\leq T$,
    \begin{align*}
        \mathbb{P} \left( J_2 \leq \underset{0 \leq t \leq T_0}{\text{sup}}\left\lvert\left\lvert q (\cdot, t) \right\rvert\right\rvert_\infty \right)
        \leq \mathbb{P} \left( J_2 \leq \underset{0 \leq t \leq T}{\text{sup}}\left\lvert\left\lvert q (\cdot, t) \right\rvert\right\rvert_\infty \right)
        ,
    \end{align*}
    which concludes the proof.
\end{proof}

The statement in Corollary \ref{cor:mueller} pertains to the case when we consider trajectories of $q$ with initial conditions below the saddle state, $q_0<q_2$, and final conditions between the saddle and the turbulence state, $q_-<J_2<q_+$, thus indicating turbulence initiation. Moreover, the assumption $q(x,t)\leq 2$ for all $x\in[0,L]$ and $t\in[0,T]$ is also not required to obtain inequality \eqref{eq:in_nom_infty_1}. 
\smallskip

In this setting, the lower bound is valid (positive), for large time intervals under the assumption of thin pipes or large noise intensity. Aside from the value $J_2$ that indicates the transition to turbulence occurrence, the lower bound in \eqref{eq:gw_restrained} is affected by the choice of $J_0$. This has further implications on the role of the final time $T$ since the definition of the stopping time $\tau$ depends on $J_0$. Yet, in \eqref{eq:in_nom_infty_1}, the sign of the lower bound is not affected by the size of the time interval considered and is constant for sufficiently large $T$. This is reflected by the dissipation towards $q_1$ in the drift component of the mild solution $q$. Such a behaviour indicates that the possibility of initiation of turbulence is less likely after large times. This property is discussed further in Section \ref{sec:5}.

\section{Stratonovich noise: Comparison SPDEs in a logarithmic scale} \label{sec:4}

In this section, we set $\sigma_{\text{S}}+\sigma_{\text{R}}>\sigma_{\text{I}}=0$ as we study \eqref{eq:syst_q}. In order to enforce the existence and uniqueness of the strong solution of the system with Stratonovich noise \cite[Section 6.5]{da2014stochastic}, we enforce the following assumptions. For fixed $m\in\mathbb{N}_{>0}$, we assume that
\begin{align*}
    \zeta_i = 0 \quad , \; \text{for $i>m$} ,
\end{align*}
and that there exists an $i\in\left\{ 0,\dots, m\right\}$ such that $\zeta_i>0$ and $\left\langle e_0, b_i \right\rangle\neq 0$. Adhering to the study of the linearized system as justified in Lemma \ref{lm:q-u}, we focus on a generalized version of the system \eqref{eq:syst_gamma}. The model

\begin{align} \label{eq:syst_u}
    \left\{
    \begin{alignedat}{2}
    \text{d} u(x,t) &= \left( \partial_{xx}^2 u(x,t) - g(x) u(x,t) + \sigma_{\text{R}} u(x,t) \circ F \left( \xi \right) (x,t) \right) \text{d}t + \sigma_{\text{S}} u(x,t) \circ Q^{\frac{1}{2}} \text{d}W_t , \\
    \partial_x u(0,t) &= \partial_x u(L,t) = 0 , \\
    u(x,0) &= q_0(x) ,
    \end{alignedat}
    \right.
\end{align}
for $x\in[0,L]$ and $t>0$, accounts for space-heterogeneity through the inclusion of the non-negative function $g\in L^2([0,L])$. Such an assumption follows from the applications on which the cable equation is found, such as climate science \cite{bernuzzi2023bifurcations}, due to the recurrent discrepancies in certain domain regions that can be found in such fields. In the next subsections, we obtain a lower bound to the probability of the turbulence onset for white and red Stratonovich noise, respectively.

\subsection{White Stratonovich noise} \label{subsec:white}

We set $\sigma_{\text{R}}=0$ in order to study white Stratonovich noise in time in the equation \eqref{eq:syst_u}. The following lemma enables the study of the system on a logarithmic scale through the inverse Cole-Hopf transformation, thus obtaining the KPZ equation \cite{hairer2013solving}.

\begin{lm} \label{lm:u-v}
    We consider $u^{\text{S}}_g=u^{\text{S}}_g(x,t)$, the strong solution of \eqref{eq:syst_u} for $x\in[0,L]$ and $t>0$. Then, $v^{\text{S}}_g =v^{\text{S}}_g(x,t):=\text{log}\left( u^{\text{S}}_g \right)$ solves 
    \begin{align} \label{eq:syst_v}
        \left\{
        \begin{alignedat}{2}
        \text{d} v^{\text{S}}_g(x,t) &= \left( \partial_{xx}^2 v^{\text{S}}_g(x,t) + \left(\partial_x v^{\text{S}}_g(x,t) \right)^2 - g(x) \right) \text{d}t + \sigma_{\text{S}} Q^{\frac{1}{2}} \text{d}W_t , \\
        \partial_x v^{\text{S}}_g(0,t) &= \partial_x v^{\text{S}}_g(L,t) = 0 , \\
        v^{\text{S}}_g(x,0) &= \text{log}\left(q_0(x)\right) ,
        \end{alignedat}
        \right.
    \end{align}
    for $x\in[0,L]$ and $t>0$.
\end{lm}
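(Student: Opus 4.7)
The plan is to apply the Stratonovich chain rule directly to $v^{\text{S}}_g = \log(u^{\text{S}}_g)$, exploiting the key virtue of the Stratonovich interpretation: it preserves the ordinary calculus chain rule without an extra quadratic variation correction. First I would verify that the transformation is admissible, i.e.\ that $u^{\text{S}}_g(x,t) > 0$ almost surely for every $(x,t)\in[0,L]\times(0,T]$. This follows from the multiplicative structure of \eqref{eq:syst_u}: since both the Stratonovich noise coefficient and the linear drift $-g(x)u$ vanish at $u\equiv 0$, the laminar state is absorbing, and the non-negative, almost-everywhere-positive initial datum $q_0$ propagates strict positivity (an observation already used implicitly via Lemma \ref{lm:q-u}). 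In particular, $\log(u^{\text{S}}_g)$ is well-defined.

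Next I would invoke an infinite-dimensional Stratonovich It\^o-type formula (for example in the form of \cite{brzezniak2008ito} combined with the It\^o--Stratonovich conversion, or directly the chain rule property that motivates the Stratonovich interpretation in fluid dynamics as discussed in \cite{holm2019stochastic,flandoli20212d}) applied to the function $\varphi(u)=\log(u)$. Formally writing $\mathrm{d} v^{\text{S}}_g = \frac{1}{u^{\text{S}}_g}\circ\mathrm{d} u^{\text{S}}_g$, the multiplicative noise term becomes
\begin{align*}
\frac{1}{u^{\text{S}}_g}\cdot\sigma_{\text{S}}\, u^{\text{S}}_g \circ Q^{\frac{1}{2}}\mathrm{d}W_t \;=\; \sigma_{\text{S}}\, Q^{\frac{1}{2}}\mathrm{d}W_t ,
\end{align*}
so the noise becomes additive, and the linear reaction term $-g(x)u^{\text{S}}_g/u^{\text{S}}_g = -g(x)$.

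The only non-trivial piece is the Laplacian. I would use the identity
\begin{align*}
\partial_{xx}^2 \log(u) \;=\; \frac{\partial_{xx}^2 u}{u} - \bigl(\partial_x \log(u)\bigr)^2 ,
\end{align*}
valid pointwise wherever $u>0$, which rearranges to $\partial_{xx}^2 u / u = \partial_{xx}^2 v^{\text{S}}_g + (\partial_x v^{\text{S}}_g)^2$, producing precisely the KPZ nonlinearity in \eqref{eq:syst_v}. Summing the three contributions yields the announced equation. For the boundary conditions, differentiating $v^{\text{S}}_g = \log(u^{\text{S}}_g)$ in $x$ gives $\partial_x v^{\text{S}}_g = \partial_x u^{\text{S}}_g / u^{\text{S}}_g$, which vanishes at $x\in\{0,L\}$ since $\partial_x u^{\text{S}}_g$ does and $u^{\text{S}}_g$ is strictly positive. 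The initial condition $v^{\text{S}}_g(x,0) = \log(q_0(x))$ is immediate.

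The main obstacle I anticipate is the rigorous justification of the Stratonovich chain rule for an infinite-dimensional, cylindrical-noise SPDE with an unbounded operator $\partial_{xx}^2$ in the drift; the pointwise formal manipulation above must be validated in the strong solution sense, which is why the paper has assumed the spectral truncation $\zeta_i=0$ for $i>m$ (so that $Q^{1/2}\mathrm{d}W_t$ is effectively a finite sum of smooth noise modes) together with the compatibility condition ensuring the existence and uniqueness of the strong solution of \eqref{eq:syst_u}. Under this truncation one can equivalently convert \eqref{eq:syst_u} to its It\^o form, apply the classical infinite-dimensional It\^o formula to $\log$, and verify that the It\^o--Stratonovich correction exactly cancels, thereby producing the Stratonovich-form equation \eqref{eq:syst_v} without correction terms.
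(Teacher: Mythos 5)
Your proposal is correct and follows essentially the same route as the paper: apply the Stratonovich chain rule $\mathrm{d} v^{\text{S}}_g = (u^{\text{S}}_g)^{-1}\circ \mathrm{d} u^{\text{S}}_g$, use the pointwise identity $(u)^{-1}\partial_{xx}^2 u = \partial_{xx}^2 \log(u) + (\partial_x \log(u))^2$ to produce the KPZ nonlinearity, and deduce the Neumann boundary conditions from the almost-sure positivity of $u^{\text{S}}_g$. Your additional remarks on justifying positivity and on verifying the statement via the It\^o form with exact cancellation of the It\^o--Stratonovich correction are consistent with (and slightly more explicit than) the paper's argument, but do not constitute a different approach.
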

\begin{proof}
    Imposing $u^{\text{S}}_g=\text{exp} \left( v^{\text{S}}_g \right)$ in \eqref{eq:syst_u}, we obtain
    \begin{align*}
        \left\{
        \begin{alignedat}{2}
        \text{d} v^{\text{S}}_g(x,t) &= u^{\text{S}}_{g}(x,t)^{-1} \circ \text{d} u^{\text{S}}_{g}(x,t) \\
        &= \left( u^{\text{S}}_{g}(x,t)^{-1} \partial_{xx}^2 u^{\text{S}}_{g}(x,t) - g(x) u^{\text{S}}_{g}(x,t) \right) \text{d}t + \left( \sigma_{\text{S}} u^{\text{S}}_{g}(x,t)^{-1} u^{\text{S}}_{g}(x,t) \right) \circ Q^{\frac{1}{2}} \text{d}W_t \\
        &= \left( \partial_{xx}^2 v^{\text{S}}_g(x,t) + \left( \partial_{x} v^{\text{S}}_g(x,t) \right)^2 - g(x) \right) \text{d}t + \sigma_{\text{S}} Q^{\frac{1}{2}} \text{d}W_t , \\
        u^{\text{S}}_{g}(0,t) \partial_x v^{\text{S}}_g(0,t) &= u^{\text{S}}_{g}(L,t) \partial_x v^{\text{S}}_g(L,t) = 0 , \\
        \text{exp}\left( v^{\text{S}}_g(x,0) \right) &= q_0(x) .
        \end{alignedat}
        \right.
    \end{align*}
    The boundary conditions in \eqref{eq:syst_v} follow from the fact that $u^{\text{S}}_g$ and $q_0$ are, by construction, almost surely positive functions for any $t>0$.
\end{proof}

The key benefit of the logarithmic perspective is the conversion of noise from a multiplicative form to an additive one. As a consequence, the nature of the drift component in \eqref{eq:syst_u} and \eqref{eq:syst_v} is drastically different. The system \eqref{eq:syst_v} is not linear since a shear deformation term is included, and a linear-in-time flow affects the solution. The lemma to follow aims to simplify the problem further.

\begin{lm} \label{lm:v-w}
    We consider $v^{\text{S}}_g$, strong solution of \eqref{eq:syst_v}, and $w^{\text{S}}_g=w^{\text{S}}_g(x,t)$, a strong solution of 
    \begin{align} \label{eq:syst_w}
        \left\{
        \begin{alignedat}{2}
        \text{d} w^{\text{S}}_g(x,t) &= \left(\partial_{xx}^2 w^{\text{S}}_g(x,t) - g(x)\right) \text{d}t + \sigma_{\text{S}} Q^{\frac{1}{2}} \text{d}W_t , \\
        \partial_x w^{\text{S}}_g(0,t) &= \partial_x w^{\text{S}}_g(L,t) = 0 , \\
        w^{\text{S}}_g(x,0) &= \text{log}\left(q_0(x)\right) ,
        \end{alignedat}
        \right.
    \end{align}
    for $x\in[0,L]$ and $t>0$. Then we obtain $v^{\text{S}}_g(x,t)\geq w^{\text{S}}_g(x,t)$ under the same sample of $W$, for any almost $x\in[0,L]$ and $t>0$.
\end{lm}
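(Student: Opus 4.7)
The plan is to set $\tilde v := v^{\text{S}}_g - w^{\text{S}}_g$ (pathwise, on the same sample $W$) and show that $\tilde v$ satisfies a deterministic PDE with non-negative source term and zero initial data, so that $\tilde v \geq 0$ follows from a standard comparison / heat-kernel positivity argument.

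First I would subtract the two equations. Since both $v^{\text{S}}_g$ and $w^{\text{S}}_g$ are driven by the same additive noise $\sigma_{\text{S}} Q^{1/2} \text{d} W_t$ and contain the same forcing $-g(x)\,\text{d}t$, these terms cancel exactly in the difference. What remains is
\begin{align*}
\left\{\begin{alignedat}{2}
\partial_t \tilde v(x,t) &= \partial_{xx}^2 \tilde v(x,t) + \bigl(\partial_x v^{\text{S}}_g(x,t)\bigr)^2 , \\
\partial_x \tilde v(0,t) &= \partial_x \tilde v(L,t) = 0 , \\
\tilde v(x,0) &= 0 .
\end{alignedat}\right.
\end{align*}
This is a pathwise (random) linear heat equation with homogeneous Neumann boundary conditions, zero initial data, and a non-negative forcing term $f(x,t) := (\partial_x v^{\text{S}}_g(x,t))^2 \geq 0$.

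Next I would express $\tilde v$ through the mild (Duhamel) formula associated with the Laplacian under Neumann boundary conditions, using the fundamental solution $G_0$ from \eqref{eq:syst_G} with $\alpha=0$:
\begin{align*}
\tilde v(x,t) = \int_0^t \int_0^L G_0\bigl(x,y,t-s,[0,L]\bigr)\, \bigl(\partial_y v^{\text{S}}_g(y,s)\bigr)^2 \, \text{d}y \, \text{d}s .
\end{align*}
Since $G_0$ is non-negative (it is the transition density of reflected Brownian motion on $[0,L]$) and the integrand is non-negative, we conclude $\tilde v(x,t) \geq 0$ almost surely for almost every $x\in[0,L]$ and every $t>0$, which is exactly the claim. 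Alternatively, the same conclusion follows from the parabolic maximum principle applied pathwise to the linear heat equation with non-negative source above.

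The main obstacle is not the positivity argument itself but ensuring enough regularity so that the subtraction and the pathwise PDE for $\tilde v$ are rigorously justified: one needs $\partial_x v^{\text{S}}_g$ to be well defined with $(\partial_x v^{\text{S}}_g)^2 \in L^1_{\text{loc}}$ in space and time. In the present setting this is not delicate in the KPZ sense, because the standing assumption $\zeta_i = 0$ for $i>m$ forces $Q^{1/2} W$ to be spatially smooth (a finite sum of smooth modes), hence the noise contribution to $v^{\text{S}}_g$ is a smooth process and $v^{\text{S}}_g$ inherits the spatial regularity needed for $(\partial_x v^{\text{S}}_g)^2$ to make sense as a non-negative integrable forcing. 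With this in hand, the preceding mild/maximum-principle argument goes through and yields $v^{\text{S}}_g \geq w^{\text{S}}_g$ on the same sample of $W$.
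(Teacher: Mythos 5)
Your proposal is correct and follows essentially the same route as the paper: the paper also defines $\tilde{w}=v^{\text{S}}_g-w^{\text{S}}_g$, notes it solves the pathwise heat equation with non-negative source $\left(\partial_x v^{\text{S}}_g\right)^2$, zero initial data and Neumann boundary conditions, and concludes by the positivity-preserving property of the Neumann heat semigroup (the same argument as in Lemma \ref{lm:q-u}). Your added remark on regularity, justified by the finite-mode assumption on $Q$, is a sensible supplement but not a departure from the paper's argument.
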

\begin{proof}
    We define $\tilde{w}=v^{\text{S}}_g-w^{\text{S}}_g$. By construction it solves
    \begin{align*}
        \left\{
        \begin{alignedat}{2}
        \partial_t \tilde{w}(x,t) &= \partial_{xx}^2 \tilde{w}(x,t) + \left( \partial_x v^{\text{S}}_g(x,t) \right)^2 , \\
        \partial_x \tilde{w}(0,t) &= \partial_x \tilde{w}(L,t) = 0 , \\
        \tilde{w}(x,0) &\equiv 0 .
        \end{alignedat}
        \right.
    \end{align*}
    The conclusion of the proof follows the same reasoning as Lemma \ref{lm:q-u}.
\end{proof}

For the existence of $w^{\text{S}}_g$, we refer to \cite{bernuzzi2023bifurcations} and \cite[Theorem 5.29]{da2014stochastic}. System \eqref{eq:syst_w} can be easily observed along the elements of a basis in $L^2([0,L])$. In the following lemma, the projections of the strong solution $w^{\text{S}}_g$ along the eigenbasis of the Laplacian operator are studied.

\begin{lm} \label{lm:w-I}
    We consider $w^{\text{S}}_g$, strong solution of \eqref{eq:syst_w} for $x\in[0,L]$ and $t>0$. Then, for all $n\in\mathbb{N}$, the scalar product $I_{n,g}(t) :=\left\langle e_n, w^{\text{S}}_g(\cdot,t) \right\rangle$ solves 
    \begin{align} \label{eq:syst_I}
        \left\{
        \begin{alignedat}{2}
        \text{d} I_{n,g}(t) &= \left( - \lambda_n I_{n,g}(t) - \left\langle e_n, g \right\rangle \right) \text{d}t + \sigma_{\text{S}} \sum_{i=0}^{m} \left( \zeta_i^{\frac{1}{2}} \left\langle e_n, b_i \right\rangle \text{d}\beta_i(t) \right) , \\
        I_{n,g}(0) &= \left\langle e_n, \text{log}\left(q_0\right) \right\rangle ,
        \end{alignedat}
        \right.
    \end{align}
    for any $t>0$.
\end{lm}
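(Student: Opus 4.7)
The plan is to derive the one-dimensional SDE satisfied by $I_{n,g}(t)$ by projecting the strong solution $w^{\text{S}}_g$ of \eqref{eq:syst_w} onto the eigenfunction $e_n$ of the Laplacian. Since the equation is linear with additive noise (once the Cole-Hopf transformation has been performed and the nonlinear term discarded via Lemma \ref{lm:v-w}), no It\^o correction will appear, and the computation reduces to an inner product applied term by term.

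First I would write $I_{n,g}(t) = \langle e_n, w^{\text{S}}_g(\cdot,t)\rangle$ and apply $\langle e_n,\cdot\rangle$ to the differential form of \eqref{eq:syst_w}. By stochastic Fubini (permitted since $e_n\in L^2([0,L])$ and the integrand is predictable with the required integrability, as guaranteed by the existence result invoked from \cite{bernuzzi2023bifurcations} and \cite[Theorem 5.29]{da2014stochastic}), the inner product commutes with both the Bochner integral in $t$ and the stochastic integral against $W$. This yields
\begin{align*}
    \text{d} I_{n,g}(t) = \langle e_n,\partial_{xx}^2 w^{\text{S}}_g(\cdot,t)\rangle\,\text{d}t - \langle e_n, g\rangle\,\text{d}t + \sigma_{\text{S}}\,\langle e_n, Q^{\frac{1}{2}}\text{d}W_t\rangle.
\end{align*}

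Next I would handle the Laplacian term by integrating by parts twice. Using the Neumann boundary conditions satisfied by both $w^{\text{S}}_g(\cdot,t)$ and $e_n$, the boundary contributions vanish at $x=0$ and $x=L$, so $\langle e_n,\partial_{xx}^2 w^{\text{S}}_g(\cdot,t)\rangle = \langle \partial_{xx}^2 e_n, w^{\text{S}}_g(\cdot,t)\rangle = -\lambda_n I_{n,g}(t)$, where $-\lambda_n$ is the eigenvalue associated with $e_n$ as introduced in Section \ref{sec:2}. For the noise term, I would use the series representation \eqref{eq:eta} together with the spectral decomposition of $Q$: since $Q$ has eigenbasis $\{b_i\}_{i\in\mathbb{N}}$ with eigenvalues $\{\zeta_i\}_{i\in\mathbb{N}}$, one has $Q^{\frac{1}{2}}\text{d}W_t = \sum_{i\in\mathbb{N}} \zeta_i^{\frac{1}{2}} b_i\,\text{d}\beta_i(t)$, and the truncation assumption $\zeta_i=0$ for $i>m$ collapses this to a finite sum. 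Pairing with $e_n$ produces $\sum_{i=0}^m \zeta_i^{\frac{1}{2}}\langle e_n,b_i\rangle\,\text{d}\beta_i(t)$.

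Combining the three contributions gives the stated SDE \eqref{eq:syst_I}, and the initial condition $I_{n,g}(0) = \langle e_n,\log(q_0)\rangle$ is immediate from the initial datum in \eqref{eq:syst_w}. I do not expect a genuine obstacle here; the only technical point worth stating carefully is the justification for exchanging the inner product with the stochastic integral, which is standard once one has the strong solution in hand, and the fact that all sums over $i$ in the diffusion coefficient are finite due to the spectral truncation imposed at the start of Section \ref{sec:4}.
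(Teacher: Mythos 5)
Your proposal is correct and follows essentially the same route as the paper's proof: project the equation for $w^{\text{S}}_g$ onto $e_n$, use the self-adjointness of the Neumann Laplacian (your explicit double integration by parts with vanishing boundary terms is just a spelled-out version of the paper's one-line identity $\left\langle e_n, \partial_{xx}^2 w^{\text{S}}_g \right\rangle = -\lambda_n I_{n,g}$), and expand the noise term in the eigenbasis of $Q$, where the truncation $\zeta_i=0$ for $i>m$ yields the finite sum $\sigma_{\text{S}}\sum_{i=0}^m \zeta_i^{\frac{1}{2}}\langle e_n,b_i\rangle\,\text{d}\beta_i(t)$. The extra remarks on interchanging the pairing with the stochastic integral are harmless additional justification that the paper leaves implicit.
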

\begin{proof}
    We arbitrarily fix $n\in\mathbb{N}$. Through \eqref{eq:syst_w}, we obtain
    \begin{align*}
        \left\{
        \begin{alignedat}{2}
        \text{d} \left\langle e_n, w^{\text{S}}_g(\cdot,t) \right\rangle &= \left( \left\langle e_n, \partial_{xx}^2 w^{\text{S}}_g(\cdot,t)  \right\rangle - \left\langle e_n, g \right\rangle \right) \text{d}t + \sigma_{\text{S}}\left\langle Q^{\frac{1}{2}} e_n, \text{d}W_t \right\rangle\\
        &= \left( - \lambda_n \left\langle e_n, w^{\text{S}}_g(\cdot,t) \right\rangle -  \left\langle e_n, g \right\rangle \right) \text{d}t + \sigma_{\text{S}}\left\langle \sum_{i=0}^m \zeta_i^{\frac{1}{2}} \left\langle e_n,b_i \right\rangle b_i , \text{d}W_t \right\rangle \\
        &= \left(- \lambda_n \left\langle e_n, w^{\text{S}}_g(\cdot,t) \right\rangle -  \left\langle e_n, g \right\rangle \right) \text{d}t + \sigma_{\text{S}} \sum_{i=0}^m \left( \zeta_i^{\frac{1}{2}} \left\langle e_n, b_i \right\rangle \text{d}\beta_i(t) \right) , \\
        I_{n,g}(0) &= \left\langle e_n, \text{log}\left(q_0\right) \right\rangle  ,
        \end{alignedat}
        \right.
    \end{align*}
    for $t>0$, which concludes the proof.
\end{proof}

The previous lemmas in this section describe, in their entirety, a chain of inequalities that constitute a bound from below of the strong solution $u^{\text{S}}_g$ of system \eqref{eq:syst_u}. This approach is employed in the following theorem to define a bound to the probability of growth in a fixed time of $u^{\text{S}}_g$ on regions of the domain.

\begin{thm} \label{thm:het_u-I}
    Under the assumption that $u^{\text{S}}_g$ is the strong solution of \eqref{eq:syst_u} for $x\in[0,L]$ and $t>0$, the following inequality holds for any non-negative function $f\in L^2([0,L])$ that is not almost everywhere zero:
    \begin{align}  \label{eq:ineq_thm_u-I}
         1-\Phi\left( \frac{\lvert\lvert f \rvert\rvert_1 \text{log} \left( \lvert\lvert f \rvert\rvert_1^{-1} J' \right) - \left( \left\langle \txte^{t \partial_{xx}^2} f, \text{log} \left( q_0 \right) \right\rangle - a_0 \left\langle e_0, g \right\rangle t - \overset{\infty}{\underset{n=1}{\sum}} a_n \frac{\left\langle e_n, g \right\rangle}{\lambda_n} \left( 1 - \txte^{-t \lambda_n} \right) \right)}{ \sigma_{\text{S}} \left( \overset{m}{\underset{i=0}{\sum}} \zeta_{i} \left( a_0^2 \left\langle e_{0}, b_{i} \right\rangle^2 t + \underset{({n_1},{n_2})\neq(0,0)}{\sum} a_{n_1} a_{n_2} \left( \frac{1-\txte^{-t \left(\lambda_{n_1}+\lambda_{n_2}\right)}}{\lambda_{n_1}+\lambda_{n_2}} \right) \left\langle e_{n_1}, b_{i} \right\rangle \left\langle e_{n_2}, b_{i} \right\rangle \right) \right)^{\frac{1}{2}}} \right) 
         \leq \mathbb{P} \left( J'\leq \left\langle f, u^{\text{S}}_g(\cdot,t) \right\rangle \right) ,
    \end{align}
    for $\Phi$, the cumulative distribution function of a standard normal distributed random variable, $J'>0$, and $a_n=\left\langle e_n, f \right\rangle$ for any $n\in\mathbb{N}$.
\end{thm}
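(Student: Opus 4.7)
The plan is to propagate the pointwise domination $u^{\text{S}}_g=\txte^{v^{\text{S}}_g}\geq \txte^{w^{\text{S}}_g}$ established in Lemmas~\ref{lm:u-v} and~\ref{lm:v-w} through the scalar product with $f$, use Jensen's inequality to convert $\langle f,\txte^{w^{\text{S}}_g}\rangle$ into an exponential of a linear functional of $w^{\text{S}}_g$, and finally exploit the Gaussianity of the SDEs satisfied by the Fourier coefficients of $w^{\text{S}}_g$ (Lemma~\ref{lm:w-I}). Since $f\geq 0$ and $f/\lvert\lvert f\rvert\rvert_1$ is a probability density on $[0,L]$, Jensen's inequality applied to the convex function $\txte^{\,\cdot}$ yields
\begin{align*}
\langle f,u^{\text{S}}_g(\cdot,t)\rangle \;\geq\; \langle f,\txte^{w^{\text{S}}_g(\cdot,t)}\rangle \;\geq\; \lvert\lvert f\rvert\rvert_1\,\text{exp}\!\left(\frac{\langle f,w^{\text{S}}_g(\cdot,t)\rangle}{\lvert\lvert f\rvert\rvert_1}\right).
\end{align*}
Hence $\{J'\leq \langle f,u^{\text{S}}_g(\cdot,t)\rangle\}$ contains the event $\{\langle f,w^{\text{S}}_g(\cdot,t)\rangle\geq \lvert\lvert f\rvert\rvert_1\text{log}(\lvert\lvert f\rvert\rvert_1^{-1}J')\}$, and it suffices to lower-bound the probability of the latter.

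Expanding $f=\sum_{n\geq 0}a_n e_n$ with $a_n=\langle e_n,f\rangle$, I would write $\langle f,w^{\text{S}}_g(\cdot,t)\rangle=\sum_n a_n I_{n,g}(t)$ and solve the linear SDE \eqref{eq:syst_I}. For $n\geq 1$ the Duhamel formula gives
\begin{align*}
I_{n,g}(t)=\txte^{-\lambda_n t}\langle e_n,\text{log}(q_0)\rangle-\frac{\langle e_n,g\rangle}{\lambda_n}\bigl(1-\txte^{-\lambda_n t}\bigr)+\sigma_{\text{S}}\sum_{i=0}^{m}\zeta_i^{1/2}\langle e_n,b_i\rangle\int_0^t \txte^{-\lambda_n(t-s)}\,\text{d}\beta_i(s),
\end{align*}
and for $n=0$ the analogous formula with $\lambda_0=0$ replaces the drift by $-\langle e_0,g\rangle t$ and the kernel by $1$. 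Summing in $n$ and recognising $\sum_n a_n \txte^{-\lambda_n t}e_n = \txte^{t\partial_{xx}^2}f$, the mean of the (Gaussian) random variable $X:=\langle f,w^{\text{S}}_g(\cdot,t)\rangle$ is
\begin{align*}
\mu(t)=\bigl\langle \txte^{t\partial_{xx}^2}f,\text{log}(q_0)\bigr\rangle - a_0\langle e_0,g\rangle t - \sum_{n=1}^{\infty}a_n\frac{\langle e_n,g\rangle}{\lambda_n}\bigl(1-\txte^{-\lambda_n t}\bigr),
\end{align*}
which is exactly the term subtracted in the numerator of \eqref{eq:ineq_thm_u-I}.

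For the variance, I would interchange the sums over $n$ and $i$ in the stochastic part of $X$: for each fixed $i$ the contributions from different $n$ are driven by the same scalar Brownian motion $\beta_i$, so the noise part is $\sigma_{\text{S}}\sum_{i=0}^{m}\zeta_i^{1/2}\int_0^t K_i(t,s)\,\text{d}\beta_i(s)$ with kernel $K_i(t,s):=\sum_n a_n\langle e_n,b_i\rangle \txte^{-\lambda_n(t-s)}$. Independence of the $\{\beta_i\}$ and It\^o's isometry then produce
\begin{align*}
\mathrm{Var}(X)=\sigma_{\text{S}}^2\sum_{i=0}^m\zeta_i\int_0^t K_i(t,s)^2\,\text{d}s = \sigma_{\text{S}}^2\sum_{i=0}^m\zeta_i\sum_{n_1,n_2}a_{n_1}a_{n_2}\langle e_{n_1},b_i\rangle\langle e_{n_2},b_i\rangle\int_0^t \txte^{-(\lambda_{n_1}+\lambda_{n_2})(t-s)}\text{d}s.
\end{align*}
Separating the $(n_1,n_2)=(0,0)$ pair (whose $s$-integral equals $t$ because $\lambda_0=0$) from the remaining pairs (whose $s$-integral equals $(\lambda_{n_1}+\lambda_{n_2})^{-1}(1-\txte^{-t(\lambda_{n_1}+\lambda_{n_2})})$) reproduces precisely the expression under the square root in the denominator of \eqref{eq:ineq_thm_u-I}. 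Finally, the Gaussian tail identity $\mathbb{P}(X\geq K)=1-\Phi((K-\mu)/\sqrt{\mathrm{Var}(X)})$ applied at $K=\lvert\lvert f\rvert\rvert_1\text{log}(\lvert\lvert f\rvert\rvert_1^{-1}J')$ yields the desired inequality.

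The main technical obstacle I anticipate is the bookkeeping in the variance computation: one has to justify swapping the sum over $n$ with the It\^o integral (which is safe because the sum over $i$ is finite and the $a_n$ decay since $f\in L^2$), correctly isolate the zero mode $\lambda_0=0$ from the exponentially decaying modes, and keep track of the cross terms $(n_1,n_2)$ with $n_1\neq n_2$ that arise because the $\beta_i$-driven noise couples different Fourier modes through $\langle e_n,b_i\rangle$. All other steps are routine applications of Jensen's inequality, variation-of-constants for a scalar linear SDE, and the standard Gaussian tail bound.
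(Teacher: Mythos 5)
Your proposal is correct and follows essentially the same route as the paper's proof: the chain Lemma \ref{lm:w-I}, Lemma \ref{lm:v-w}, Jensen's inequality with the normalized density $f/\lvert\lvert f\rvert\rvert_1$, and Lemma \ref{lm:u-v}, combined with the Gaussianity of $\langle f, w^{\text{S}}_g(\cdot,t)\rangle$ and the same mean/variance computation (your It\^o-isometry bookkeeping with the kernels $K_i$ is just the expanded form of the paper's covariance expression $\sigma_{\text{S}}^2\int_0^t\langle f,\txte^{s\partial_{xx}^2}Q\txte^{s\partial_{xx}^2}f\rangle\,\text{d}s$). The only cosmetic difference is that you apply Jensen to $w^{\text{S}}_g$ and then use $\txte^{w}\leq\txte^{v}$, while the paper passes from $w$ to $v$ before exponentiating; this is immaterial.
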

\begin{proof}
    We know that
    \begin{align*}
        I_{n,g}(t)= \txte^{-t \lambda_n} I_{n,g}(0) - \frac{\left\langle e_n, g \right\rangle}{\lambda_n} \left( 1 - \txte^{-t \lambda_n} \right) 
        + \sigma_{\text{S}} \sum_{i=0}^m \left( \zeta_i^{\frac{1}{2}} \left\langle e_n, b_i \right\rangle \int_0^t \txte^{-(t-s) \lambda_n} \text{d} \beta_i(s) \right)
    \end{align*}
    holds for any $n\in\mathbb{N}_{>0}$, and that
    \begin{align*}
        I_{0,g}(t)= I_{0,g}(0) - \left\langle e_0, g \right\rangle t 
        + \sigma_{\text{S}} \sum_{i=0}^m \left( \zeta_i^{\frac{1}{2}} \left\langle e_0, b_i \right\rangle \beta_i(s) \right) .
    \end{align*}
    It follows that
    \begin{align*}
        \mathbb{E} \left( I_{n,g}(t) \right) =
         \txte^{-t \lambda_n} I_{n,g}(0) - \frac{\left\langle e_n, g \right\rangle}{\lambda_n} \left( 1 - \txte^{-t \lambda_n} \right) ,
    \end{align*}
    for any $n\in\mathbb{N}_{>0}$, and also
    \begin{align*}
        \mathbb{E} \left( I_{0,g}(t) \right) =
         I_{0,g}(0) - \left\langle e_0, g \right\rangle t .
    \end{align*}
    Moreover, from the construction of the systems \eqref{eq:syst_I} and the limit
    \begin{align*}
        \underset{n\rightarrow\infty}{\text{lim}}\frac{\lambda_{n}}{1+n^2}<\infty ,
    \end{align*} 
    we obtain that
    \begin{align*}
        &\mathbb{E} \left( \sum_{n=0}^\infty a_n I_{n,g}(t) \right) 
        = a_0 \left( I_{0,g}(0) - \left\langle e_0, g \right\rangle t\right) + \sum_{n=1}^\infty a_n \left( \txte^{-t \lambda_n} I_{n,g}(0) - \frac{\left\langle e_n, g \right\rangle}{\lambda_n} \left( 1 - \txte^{-t \lambda_n} \right) \right) < \infty .
    \end{align*}
    Levy's continuity lemma implies that $\sum_{n=0}^\infty a_n I_{n,g}(t)$ has a Gaussian distribution with variance
    \begin{align*}
        \text{Var} \left( \sum_{n=0}^\infty a_n I_{n,g}(t) \right) 
        &= \text{Var} \left( \left\langle f, w^{\text{S}}_g (\cdot, t ) \right\rangle \right)
        =\sigma_{\text{S}}^2 \int_0^t \left\langle f, \txte^{s \partial_{xx}^2} Q \txte^{s \partial_{xx}^2} f \right\rangle \text{d}s \\
        &= \sigma_{\text{S}}^2 \int_0^t \left\langle \sum_{{n_1}=0}^\infty a_{n_1} \txte^{-s \lambda_{n_1}} e_{n_1}, Q \sum_{{n_2}=0}^\infty a_{n_2} \txte^{-s \lambda_{n_2}} e_{n_2} \right\rangle \text{d}s \\
        &= \sigma_{\text{S}}^2 \int_0^t \left\langle \sum_{{n_1}=0}^\infty \sum_{i_1=0}^m a_{n_1} \txte^{-s \lambda_{n_1}} \left\langle e_{n_1}, b_{i_1} \right\rangle b_{i_1}, \sum_{{n_2}=0}^\infty \sum_{i_2=0}^m \zeta_{i_2} a_{n_2} \txte^{-s \lambda_{n_2}} \left\langle e_{n_2}, b_{i_2} \right\rangle b_{i_2} \right\rangle \text{d}s \\
        &=  \sigma_{\text{S}}^2 \int_0^t \sum_{{n_1}=0}^\infty \sum_{{n_2}=0}^\infty  \sum_{i=0}^m a_{n_1} a_{n_2} \zeta_{i} \txte^{-s \left(\lambda_{n_1}+\lambda_{n_2}\right)} \left\langle e_{n_1}, b_{i} \right\rangle \left\langle e_{n_2}, b_{i} \right\rangle \text{d}s \\
        &=  \sigma_{\text{S}}^2 \sum_{i=0}^m \zeta_{i} \left( a_0^2 \left\langle e_{0}, b_{i} \right\rangle^2 t + \sum_{({n_1},{n_2})\neq (0,0)} a_{n_1} a_{n_2} \left( \frac{1-\txte^{-t \left(\lambda_{n_1}+\lambda_{n_2}\right) }}{\lambda_{n_1}+\lambda_{n_2}} \right) \left\langle e_{n_1}, b_{i} \right\rangle \left\langle e_{n_2}, b_{i} \right\rangle \right) ,
    \end{align*}
    and that
    \begin{align*}
        \mathbb{P}\left( \sum_{n=0}^\infty a_n I_{n,g}(t) \geq J'' \right) 
        &= 1-\Phi\left( \frac{J'' - \mathbb{E} \left( \sum_{n=0}^\infty a_n I_{n,g}(t) \right)}{ \left( \text{Var} \left( \sum_{n=0}^\infty a_n I_{n,g}(t) \right) \right)^{\frac{1}{2}}} \right) \\
        &= 1-\Phi\left( \frac{J'' - \left(a_0 \left( I_{0,g}(0) - \left\langle e_0, g \right\rangle t\right) + \sum_{n=1}^\infty a_n \left( \txte^{-t \lambda_n} I_{n,g}(0) - \frac{\left\langle e_n, g \right\rangle}{\lambda_n} \left( 1 - \txte^{-t \lambda_n} \right) \right)  \right)}{\sigma_{\text{S}} \left( \sum_{i=0}^m \zeta_{i} \left( a_0^2 \left\langle e_{0}, b_{i} \right\rangle^2 t + \sum_{({n_1},{n_2})\neq (0,0)} a_{n_1} a_{n_2} \left( \frac{1-\txte^{-t \left(\lambda_{n_1}+\lambda_{n_2}\right)}}{\lambda_{n_1}+\lambda_{n_2}} \right) \left\langle e_{n_1}, b_{i} \right\rangle \left\langle e_{n_2}, b_{i} \right\rangle \right) \right)^{\frac{1}{2}}} \right) \\
        &= 1-\Phi\left( \frac{J'' - \left( \left\langle \txte^{t \partial_{xx}^2} f, w^{\text{S}}_g(\cdot,0) \right\rangle - a_0 \left\langle e_0, g \right\rangle t - \sum_{n=1}^\infty a_n \frac{\left\langle e_n, g \right\rangle}{\lambda_n} \left( 1 - \txte^{-t \lambda_n} \right) \right)}{\sigma_{\text{S}} \left( \sum_{i=0}^m \zeta_{i} \left( a_0^2 \left\langle e_{0}, b_{i} \right\rangle^2 t + \sum_{({n_1},{n_2})\neq (0,0)} a_{n_1} a_{n_2} \left( \frac{1-\txte^{-t \left(\lambda_{n_1}+\lambda_{n_2}\right)}}{\lambda_{n_1}+\lambda_{n_2}} \right) \left\langle e_{n_1}, b_{i} \right\rangle \left\langle e_{n_2}, b_{i} \right\rangle \right) \right)^{\frac{1}{2}}} \right) ,
    \end{align*}
    for $J''\in\mathbb{R}$. We assume henceforth that $\overset{\infty}{\underset{n=0}{\sum}} a_n \; I_{n,g}(t) \geq J''$. We employ, in order, Lemma \ref{lm:w-I}, Lemma \ref{lm:v-w}, Jensen's inequality and Lemma \ref{lm:u-v} as follows:
    \begin{align*}
         \lvert\lvert f \rvert\rvert_1 \text{exp} \left( \lvert\lvert f \rvert\rvert_1^{-1} J'' \right) 
         & \leq \lvert\lvert f \rvert\rvert_1 \text{exp} \left( \lvert\lvert f \rvert\rvert_1^{-1} \sum_{n=0}^\infty a_n \; I_{n,g}(t) \right)
         = \lvert\lvert f \rvert\rvert_1 \text{exp} \left( \lvert\lvert f \rvert\rvert_1^{-1} \left\langle f, w^{\text{S}}_g(\cdot,t) \right\rangle \right) \\
         & = \lvert\lvert f \rvert\rvert_1 \text{exp} \left( \lvert\lvert f \rvert\rvert_1^{-1} \int_0^L f(x) \; w^{\text{S}}_g(x,t) \text{d}x  \right)
         \leq \lvert\lvert f \rvert\rvert_1 \text{exp} \left( \lvert\lvert f \rvert\rvert_1^{-1} \int_0^L f(x)\; v^{\text{S}}_g(x,t) \text{d}x  \right) \\
         & \leq \frac{\lvert\lvert f \rvert\rvert_1}{\lvert\lvert f \rvert\rvert_1} \int_0^L f(x)\; \text{exp} \left( v^{\text{S}}_g(x,t) \right) \text{d}x
         = \left\langle f, \text{exp} \left( v^{\text{S}}_g(\cdot,t) \right) \right\rangle
         = \left\langle f, u^{\text{S}}_g(\cdot,t) \right\rangle .
    \end{align*}
    This entails that
    \begin{align*}
         \mathbb{P}\left( \sum_{n=0}^\infty a_n I_{n,g}(t) \geq J'' \right) \leq \mathbb{P} \left( \lvert\lvert f \rvert\rvert_1 \text{exp} \left( \lvert\lvert f \rvert\rvert_1^{-1} J'' \right)
         \leq \left\langle f, u^{\text{S}}_g(\cdot,t) \right\rangle \right) .
    \end{align*}
    The proof is concluded upon defining $J'=\lvert\lvert f \rvert\rvert_1 \text{exp} \left( \lvert\lvert f \rvert\rvert_1^{-1} J'' \right)$.
\end{proof}

Theorem \ref{thm:het_u-I} provides a lower bound to the probability of the onset of $u^{\text{S}}_g$ under the assumption of heterogeneity in space, induced by the term $g$ in \eqref{eq:syst_u}. The bound highly depends on the choice of the function $f$, upon which the strong solution $u^{\text{S}}_g$ is projected. While this function defines the observable and can, therefore, be assumed to be known in applications, the shape of the function $g$ is also required to compute the bound numerically. A more in-depth discussion can be found in \cite{bernuzzi2023bifurcations}.
\smallskip

In the next corollary, we assume $g\equiv 1$ in order to extend the statement of Theorem \ref{thm:het_u-I} to the study of $q$, the strong solution of system \eqref{eq:syst_q}. The results provide a lower bound to the local initiation of turbulence in $q$.

\begin{cor} \label{cor:q-I}
    Under the assumption that $q$, strong solution of \eqref{eq:syst_q}, satisfies $0<q(x,t)\leq 2$ for any $x\in[0,L]$ and $t\in[0,T]$, the following inequality holds for any non-negative function $f\in L^2([0,L])$ that is not almost everywhere zero:
    \begin{align*}
         1-\underset{0\leq t \leq T}{\text{min}}\Phi\left( \frac{\lvert\lvert f \rvert\rvert_1 \text{log} \left( \lvert\lvert f \rvert\rvert_1^{-1} J' \right) - \left\langle \txte^{t \partial_{xx}^2} f, \text{log} \left( q_0 \right) \right\rangle + a_0 L^{\frac{1}{2}} t }{ \sigma_{\text{S}} \left( \overset{m}{\underset{i=0}{\sum}} \zeta_{i} \left( a_0^2 \left\langle e_{0}, b_{i} \right\rangle^2 t + \underset{({n_1},{n_2})\neq(0,0)}{\sum} a_{n_1} a_{n_2} \left( \frac{1-\txte^{-t \left(\lambda_{n_1}+\lambda_{n_2}\right)}}{\lambda_{n_1}+\lambda_{n_2}} \right) \left\langle e_{n_1}, b_{i} \right\rangle \left\langle e_{n_2}, b_{i} \right\rangle \right) \right)^{\frac{1}{2}}} \right) 
         \leq \mathbb{P} \left( J'\leq \underset{0\leq t \leq T}{\text{sup}} \left\langle f, q(\cdot,t) \right\rangle \right) ,
    \end{align*}
    for $\Phi$, the cumulative distribution function of a standard normal distributed random variable, $a_n=\left\langle e_n, f \right\rangle$ for any $n\in\mathbb{N}$, $g= L^{\frac{1}{2}} e_0 \equiv 1$ and $J'>0$.
\end{cor}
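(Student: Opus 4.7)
The plan is to specialize Theorem \ref{thm:het_u-I} to the case $g\equiv 1$, transfer the resulting bound from the linearization $u^{\text{S}}_g$ to the original nonlinear process $q$ via Lemma \ref{lm:q-u}(a), and finally pass from a pointwise-in-$t$ lower bound to one involving $\sup_{0\leq t \leq T}$. The main insight is that with $g = L^{\frac{1}{2}} e_0 \equiv 1$, orthonormality of the Neumann eigenbasis gives $\langle e_0, g\rangle = L^{\frac{1}{2}}$ and $\langle e_n, g\rangle = 0$ for every $n\geq 1$. Inserting these values into the numerator of the $\Phi$-argument of \eqref{eq:ineq_thm_u-I} eliminates the series $\sum_{n\geq 1} a_n \langle e_n, g\rangle \lambda_n^{-1}\bigl(1-\txte^{-t\lambda_n}\bigr)$ and collapses the deterministic drift to precisely the term $a_0 L^{\frac{1}{2}} t$ appearing in the corollary; the variance block and the heat-kernel contribution $\langle \txte^{t\partial_{xx}^2} f, \text{log}(q_0)\rangle$ carry over unchanged.

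Next, I invoke Lemma \ref{lm:q-u}(a): since $q(x,t)\leq 2$ on $[0,L]\times[0,T]$ by hypothesis, the a.s.\ pointwise comparison $q(x,t)\geq u^{\text{S}}_1(x,t)$ holds, where $u^{\text{S}}_1 = u^{\text{S}}_g\vert_{g\equiv 1}$. Pairing against any non-negative $f\in L^2([0,L])$ preserves the inequality, so for each fixed $t\in[0,T]$,
\begin{align*}
    \mathbb{P}\!\left(J' \leq \left\langle f, u^{\text{S}}_g(\cdot,t)\right\rangle\right)
    \;\leq\; \mathbb{P}\!\left(J' \leq \left\langle f, q(\cdot,t)\right\rangle\right)
    \;\leq\; \mathbb{P}\!\left(J' \leq \underset{0\leq s\leq T}{\text{sup}}\left\langle f, q(\cdot,s)\right\rangle\right).
\end{align*}
Applying Theorem \ref{thm:het_u-I} to the leftmost probability yields, for each $t$, a lower bound of the form $1-\Phi(\cdot,t)$ with the simplified $\Phi$-argument identified above.

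Since the rightmost probability in the chain does not depend on $t$, the family of pointwise bounds $\{1-\Phi(\cdot,t)\}_{t\in[0,T]}$ all lower-bound the same quantity; I therefore take the best one, i.e.\ the supremum over $t\in[0,T]$. By monotonicity and continuity of $\Phi$ this equals $1-\underset{0\leq t\leq T}{\text{min}}\Phi(\cdot,t)$, which is exactly the expression in the corollary. The only real subtlety is the bookkeeping of the $g$-dependent contributions through Theorem \ref{thm:het_u-I}; once the orthogonality relations for $g\equiv 1$ are used, the statement is an immediate combination of the comparison lemma and monotonicity of the standard normal CDF, so no substantive technical obstacle is expected.
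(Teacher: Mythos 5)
Your proposal is correct and follows essentially the same route as the paper: apply Lemma \ref{lm:q-u} to pass from $u^{\text{S}}_g$ (with $g\equiv 1$, i.e.\ $u^{\text{S}}_1$) to $q$, note the right-hand probability is independent of $t$ so the pointwise bounds can be maximized over $t\in[0,T]$, and invoke Theorem \ref{thm:het_u-I}, whose $\Phi$-argument simplifies via $\langle e_0,g\rangle=L^{\frac{1}{2}}$ and $\langle e_n,g\rangle=0$ for $n\geq 1$. The paper's proof is just a terser version of the same argument, leaving the orthogonality bookkeeping implicit.
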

\begin{proof}
    For any $t\in[0,T]$, we employ Lemma \ref{lm:q-u} to obtain 
    \begin{align*}
         \mathbb{P} \left( J'\leq \left\langle f, u^{\text{S}}_g(\cdot,t) \right\rangle \right) 
         \leq \mathbb{P} \left( J'\leq \left\langle f, q(\cdot,t) \right\rangle \right) 
         \leq \mathbb{P} \left( J'\leq \underset{0\leq t \leq T}{\text{sup}} \left\langle f, q(\cdot,t) \right\rangle \right) .
    \end{align*}
    This implies that
    \begin{align*}
         \underset{0\leq t \leq T}{\text{max}} \mathbb{P} \left( J'\leq \left\langle f, u^{\text{S}}_g(\cdot,t) \right\rangle \right)
         \leq \mathbb{P} \left( J'\leq \underset{0\leq t \leq T}{\text{sup}} \left\langle f, q(\cdot,t) \right\rangle \right) .
    \end{align*}
    The statement of Theorem \ref{thm:het_u-I} concludes the proof.
\end{proof}

Similarly to Corollary \ref{cor:mueller} $(b)$, we discuss in the corollary to follow the rise of turbulence on the whole domain. Consequently, it does not require the upper bound $q\leq 2$ assumption imposed in Corollary \ref{cor:q-I}. The corollaries are further compared in Section \ref{sec:5}.

\begin{cor} \label{cor:q-I_sup}
    For $q$, strong solution of \eqref{eq:syst_q} for any $x\in[0,L]$ and $t\in[0,T]$, the following inequality holds:
    \begin{align*}
        1-\underset{0\leq t \leq T}{\text{min}} \Phi\left( \frac{L^{\frac{1}{2}} }{\sigma_{\text{S}} \left( \overset{m}{\underset{i=0}{\sum}} \zeta_{i} \left\langle e_{0}, b_{i} \right\rangle^2 \right)^{\frac{1}{2}} }
        \left( t^{-\frac{1}{2}} \left( \text{log}\left( J \right) - L^{-1} \int_0^L \text{log} \left( q_0(x) \right) \text{d}x \right) + t^{\frac{1}{2}} \right) \right) 
        \leq \mathbb{P} \left( J \leq \underset{0\leq t \leq T}{\text{sup}} \lvert\lvert q(\cdot,t) \rvert\rvert_\infty \right) ,
    \end{align*}
    for $\Phi$, the cumulative distribution function of a standard normal distributed random variable and $0<J<2$.
\end{cor}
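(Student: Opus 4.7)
The plan is to invoke Theorem \ref{thm:het_u-I} in the special case $f\equiv 1$, $g\equiv 1$, and then convert the resulting bound on the weak projection $\langle 1, u^{\text{S}}_1(\cdot,t)\rangle$ into a bound on the $L^\infty([0,L])$-norm of $q$ through the comparison inequalities of Lemma \ref{lm:q-u}. The identification of the final constants will amount to setting $J'=JL$ in Corollary \ref{cor:q-I}-type bookkeeping.

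First I would substitute $f\equiv 1$ into Theorem \ref{thm:het_u-I}. Expanding $1=L^{1/2}e_0$ in the Neumann eigenbasis yields $a_0=L^{1/2}$ and $a_n=0$ for every $n\geq 1$, together with $\lvert\lvert f\rvert\rvert_1=L$. Because $\lambda_0=0$, the semigroup acts trivially on $f$, so $\langle\txte^{t\partial_{xx}^2}f,\log(q_0)\rangle=\int_0^L\log(q_0(x))\,\mathrm{d}x$. The choice $g\equiv 1=L^{1/2}e_0$ gives $\langle e_0,g\rangle=L^{1/2}$ and kills the sum over $n\geq 1$ in the drift contribution; in the variance, only the $(n_1,n_2)=(0,0)$ term survives, reducing it to $\sigma_{\text{S}}^2\,L\,\sum_{i=0}^m\zeta_i\langle e_0,b_i\rangle^2\,t$. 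Factoring $L$ out of the numerator and $L^{1/2}t^{1/2}$ out of the denominator then reproduces the exact expression in the statement, provided I set $J'=JL$.

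Next I would pass from the weak projection to the uniform norm. Since $u^{\text{S}}_1$ is non-negative, the trivial estimate $\lvert\lvert u^{\text{S}}_1(\cdot,t)\rvert\rvert_1\leq L\,\lvert\lvert u^{\text{S}}_1(\cdot,t)\rvert\rvert_\infty$ yields
\begin{align*}
\mathbb{P}\!\left(JL\leq\langle 1,u^{\text{S}}_1(\cdot,t)\rangle\right)\leq\mathbb{P}\!\left(J\leq\lvert\lvert u^{\text{S}}_1(\cdot,t)\rvert\rvert_\infty\right)\leq\mathbb{P}\!\left(J\leq\sup_{0\leq s\leq T}\lvert\lvert u^{\text{S}}_1(\cdot,s)\rvert\rvert_\infty\right).
\end{align*}
As $0<J<2$, Lemma \ref{lm:q-u}(b) applies and delivers $\tau_J\leq\upsilon^{\text{S}}_J$, so $\{\upsilon^{\text{S}}_J\leq T\}\subseteq\{\tau_J\leq T\}$ and the last probability is bounded above by $\mathbb{P}(J\leq\sup_{0\leq s\leq T}\lvert\lvert q(\cdot,s)\rvert\rvert_\infty)$. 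Taking the maximum over $t\in[0,T]$ on the left (i.e.\ the minimum of $\Phi$) closes the inequality and produces the stated bound.

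The only subtlety, relative to Corollary \ref{cor:q-I}, is the absence of the pointwise assumption $q\leq 2$. The plan is to avoid Lemma \ref{lm:q-u}(a), whose use in Corollary \ref{cor:q-I} is precisely what forced $q\leq 2$, and instead to rely exclusively on Lemma \ref{lm:q-u}(b), which only requires the threshold $J$ to satisfy $J\leq 2$. The comparison is then made between the first-exit times $\tau_J$ and $\upsilon^{\text{S}}_J$ rather than between $q$ and $u^{\text{S}}_1$ at every $(x,t)$. Beyond this observation, the argument is essentially the bookkeeping described above, and I expect no further obstacles once the specialization $f\equiv g\equiv 1$ has been carried out.
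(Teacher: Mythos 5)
Your proposal is correct and follows essentially the same route as the paper: specialize Theorem \ref{thm:het_u-I} to $f=g=L^{1/2}e_0\equiv 1$ with $J'=LJ$ so that only the $(0,0)$ mode survives in both the drift and the variance, then pass from the $L^1$-bound to the $L^\infty$-bound via H\"older and transfer to $q$ via Lemma \ref{lm:q-u}. Your explicit observation that only part $(b)$ of Lemma \ref{lm:q-u} is invoked (so the pointwise hypothesis $q\leq 2$ is not needed) is exactly the point the paper makes in the remark preceding the corollary.
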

\begin{proof}
    We consider inequality \eqref{eq:ineq_thm_u-I} for $g=f=L^{\frac{1}{2}} e_0 \equiv 1$ and $J'= L J$. This choice of $f\in L^2([0,L])$ implies that 
    \begin{align*}
        \left\langle f, u^{\text{S}}_1(\cdot,t) \right\rangle = \lvert\lvert u^{\text{S}}_1(\cdot,t) \rvert\rvert_1 ,
    \end{align*}
    for any $t>0$. Therefore, we obtain
    \begin{align*}
        1-\underset{0\leq t \leq T}{\text{min}}\Phi\left( \frac{L \; \text{log} \left( J \right) - \int_0^L \text{log} \left( q_0(x) \right) \text{d}x + L t }{ \sigma_{\text{S}} \left( L t \right)^\frac{1}{2} \left( \overset{m}{\underset{i=0}{\sum}} \zeta_{i} \left\langle e_{0}, b_{i} \right\rangle^2 \right)^{\frac{1}{2}}} \right) 
        \leq \mathbb{P} \left( L J \leq \underset{0\leq t \leq T}{\text{sup}} \lvert\lvert u^{\text{S}}_1(\cdot,t) \rvert\rvert_1 \right) .
    \end{align*}
    Lastly, we employ H\"older's inequality and Lemma \ref{lm:q-u} in
    \begin{align*}
        \mathbb{P} \left( L J \leq \underset{0\leq t \leq T}{\text{sup}} \lvert\lvert u^{\text{S}}_1(\cdot,t) \rvert\rvert_1 \right) 
        \leq \mathbb{P} \left( J \leq \underset{0\leq t \leq T}{\text{sup}} \lvert\lvert u^{\text{S}}_1(\cdot,t) \rvert\rvert_\infty \right) 
        \leq \mathbb{P} \left( J \leq \underset{0\leq t \leq T}{\text{sup}} \lvert\lvert q(\cdot,t) \rvert\rvert_\infty \right) ,
    \end{align*}
    which proves the statement.
\end{proof}

\subsection{Red Stratonovich noise} \label{subsec:red}
    
    In the previous subsection, we introduce an approach for the estimation of a lower bound to the probability of jump in \eqref{eq:syst_u} under the assumption of white Stratonovich noise, i.e., $\sigma_{\text{S}}>\sigma_{\text{R}}=0$. Conversely, we consider in the remainder part of the section the red noise assumption, i.e., $\sigma_{\text{R}}>\sigma_{\text{S}}=0$. The red noise influence can be interpreted in different manners depending on the operator $F$ and the adapted process $\xi=\xi(x,t)$ \cite{kuehn2021warning,morr2024detection,morr2022red,sardeshmukh2003drifts}. We consider the strong solution of
        \begin{align} \label{eq:syst_z}
            \left\{
            \begin{alignedat}{2}
            \text{d} \xi(x,t) &= -\kappa \xi(x,t) \text{d}t + \sigma_{\xi} Q^{\frac{1}{2}} \text{d}W_t' , \\
            \xi(x,0) &\equiv 0,
            \end{alignedat}
            \right.
        \end{align}
    for $\kappa>0$, $\sigma_{\xi}>0$, $x\in[0,L]$ and $t\geq 0$. For simplicity of notation, the noise $W'_t$ is cylindrical, adapted, and independent from $W_t$. Such an Ornstein-Uhlenbeck process enables the construction of $q$, the strong solution of \eqref{eq:syst_q}, and of $u^{\text{R}}_{g}=u^{\text{R}}_{g}(x,t)$, the strong solution of \eqref{eq:syst_u} for $x\in[0,L]$ and $t\geq 0$. We define then the inverse Cole-Hopf transform $v^{\text{R}}_g:=\text{log}\left( u^{\text{R}}_{g} \right)$. Since the noise is of Stratonovich type and we are considering strong solutions of the involved systems \cite[Theorem 5.29]{da2014stochastic}, the process solves
    \begin{align*}
        \left\{
        \begin{alignedat}{2}
        \partial_t v^{\text{R}}_g(x,t) &= \partial_{xx}^2 v^{\text{R}}_g(x,t) + \left( \partial_{x} v^{\text{R}}_g(x,t) \right)^2- g(x) + \sigma_{\text{R}} \circ F( \xi ) (x,t) , \\
        \partial_x v^{\text{R}}_g(0,t) &= \partial_x v^{\text{R}}_g(L,t) = 0 , \\
        v^{\text{R}}_g(x,0) &= \text{log}\left(q_0(x)\right) ,
        \end{alignedat}
        \right.
    \end{align*}
    for $x\in[0,L]$ and $t\geq 0$. Such a statement can be proven following the steps in the proof of Lemma \ref{lm:u-v} on the couple
    \begin{align*}
         \begin{pmatrix} v^{\text{R}}_g(x,t) \\ \xi(x,t) \end{pmatrix} = \begin{pmatrix} \text{log}\left( u^{\text{R}}_{g} \right) \\ \xi(x,t) \end{pmatrix} .
    \end{align*}
    Similarly, we note that the inequality
    \begin{align*}
        v^{\text{R}}_g(x,t) \geq w^{\text{R}}_g(x,t) 
    \end{align*}
    holds for $w^{\text{R}}_g=w^{\text{R}}_g(x,t)$, the strong solution of
    \begin{align} \label{eq:syst_w_SR}
        \left\{
        \begin{alignedat}{2}
        \partial_t w^{\text{R}}_g(x,t) &= \partial_{xx}^2 w^{\text{R}}_g(x,t) - g(x) + \sigma_{\text{R}} F( \xi ) (x,t) , \\
        \partial_x w^{\text{R}}_g(0,t) &= \partial_x w^{\text{R}}_g(L,t) = 0 , \\
        w^{\text{R}}_g(x,0) &= \text{log}\left(q_0(x)\right) ,
        \end{alignedat}
        \right.
    \end{align}
    following the reasoning of the proof in Lemma \ref{lm:v-w}. We then focus on two interpretations of red noise influence. For $F=\operatorname{Id}$, the identity operator on $L^2([0,L])$, the systems \eqref{eq:syst_z} and \eqref{eq:syst_w_SR} can be rewritten as
    \begin{align} \label{eq:syst_w_SR1}
        \left\{
        \begin{alignedat}{2}
        \text{d} \begin{pmatrix}w^{\text{R}}_g(x,t) \\ \xi(x,t) \end{pmatrix} &= 
        \left( \begin{pmatrix}
            \partial^2_{xx} & \sigma_{\text{R}} \\
            0 & -\kappa
        \end{pmatrix} \begin{pmatrix}w^{\text{R}}_g(x,t) \\ \xi(x,t) \end{pmatrix} 
        - \begin{pmatrix} g(x) \\ 0 \end{pmatrix} \right) \text{d}t
        + \begin{pmatrix} 0 & 0 \\ 0 & \sigma_{\xi} Q^{\frac{1}{2}} \end{pmatrix} \begin{pmatrix} \text{d}W_t \\ \text{d}W'_t \end{pmatrix} , \\
        w^{\text{R}}_g(x,0) &= \text{log}\left(q_0(x)\right) , \\
        \xi(x,0) &\equiv 0 ;
        \end{alignedat}
        \right.
    \end{align}
    for $F= \partial_t$, they can be interpreted as
    \begin{align} \label{eq:syst_w_SR2}
        \left\{
        \begin{alignedat}{2}
        \text{d} \begin{pmatrix}w^{\text{R}}_g(x,t) \\ \xi(x,t) \end{pmatrix} &= 
        \left( \begin{pmatrix}
            \partial^2_{xx} & -\kappa \sigma_{\text{R}} \\
            0 & -\kappa
        \end{pmatrix} \begin{pmatrix}w^{\text{R}}_g(x,t) \\ \xi(x,t) \end{pmatrix} 
        - \begin{pmatrix} g(x) \\ 0 \end{pmatrix} \right) \text{d}t
        + \begin{pmatrix} 0 & \sigma_{\text{R}} \sigma_{\xi} Q^{\frac{1}{2}} \\ 0 & \sigma_{\xi} Q^{\frac{1}{2}} \end{pmatrix} \begin{pmatrix} \text{d}W_t \\ \text{d}W'_t \end{pmatrix} , \\
        w^{\text{R}}_g(x,0) &= \text{log}\left(q_0(x)\right) , \\
        \xi(x,0) &\equiv 0 ,
        \end{alignedat}
        \right.
    \end{align}
    for $x\in[0,L]$ and $t\geq 0$. We focus first on \eqref{eq:syst_w_SR1}; nevertheless, the systems, although qualitatively different, can be studied in a similar manner. In fact, the turbulence system \eqref{eq:syst_q} corresponding to the parameters and operator $F$ associated to \eqref{eq:syst_w_SR1} displays additive red noise in $L^2([0,L])\times L^2([0,L])$, which entails that the system \eqref{eq:syst_q} has a null It\^o-Stratonovich correction term. Conversely, for parameters and operator $F$ corresponding to \eqref{eq:syst_w_SR2}, the noise in system \eqref{eq:syst_q} is interpreted in the Stratonovich sense to implement the chain rule, in contrast to It\^o's lemma \cite{brzezniak2008ito}, further below. In Appendix \ref{app:A}, the system \eqref{eq:syst_q} is converted to the It\^o perspective for the cases covered in the paper. The equations are simulated in Figure \ref{fig:Fig2}, where the initiation of turbulence is observed under different assumptions through the TAMS algorithm. The results corresponding to \eqref{eq:syst_w_SR2} are displayed at the end of the subsection, discussed in Section \ref{sec:5} and proven in Appendix \ref{app:B}. 

    \begin{figure}[h!]
        \centering
        \subfloat[Trajectory $q$ solving \eqref{eq:syst_q}\\ with $F=\operatorname{Id}$ and $\kappa=0.5$.\\ It shows turbulence onset\\ with respect to the $L^1{([0,L])}$-norm.]{\begin{overpic}[scale=0.30]{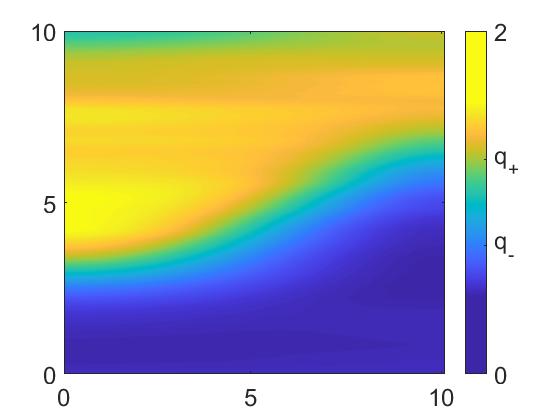}
        \put(-20,380){$t$}
        \put(530,20){$x$}
        \end{overpic}}
        \hspace{8mm}
        \subfloat[Trajectory $q$ solving \eqref{eq:syst_q}\\ with $F=\operatorname{Id}$ and $\kappa=0.5$.\\ It shows turbulence onset\\ with respect to the $L^\infty{([0,L])}$-norm.]{\begin{overpic}[scale=0.30]{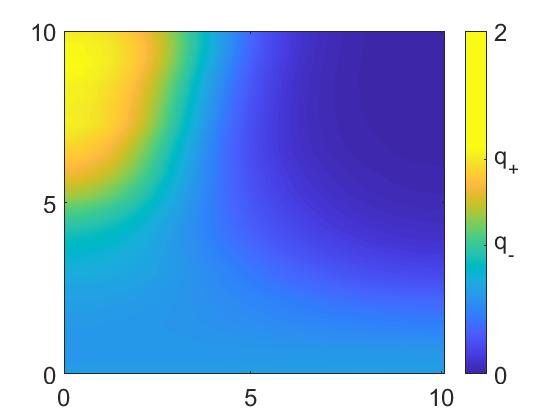}
        \put(-20,380){$t$}
        \put(530,20){$x$}
        \end{overpic}}
        
        \vspace{0mm}
        
        \subfloat[Trajectory $q$ solving \eqref{eq:syst_q}\\ with $F= \partial_t$ and $\kappa=0.05$.\\ It shows turbulence onset\\ with respect to the $L^1{([0,L])}$-norm.]{\begin{overpic}[scale=0.30]{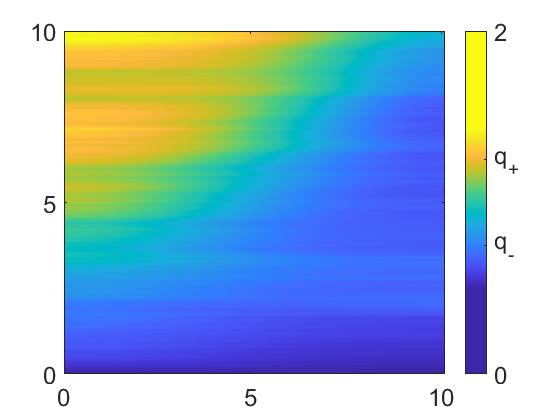}
        \put(-20,380){$t$}
        \put(530,20){$x$}
        \end{overpic}}
        \hspace{8mm}
        \subfloat[Trajectory $q$ solving \eqref{eq:syst_q}\\ with $F= \partial_t$ and $\kappa=0.05$.\\ It shows turbulence onset\\ with respect to the $L^\infty{([0,L])}$-norm.]{\begin{overpic}[scale=0.30]{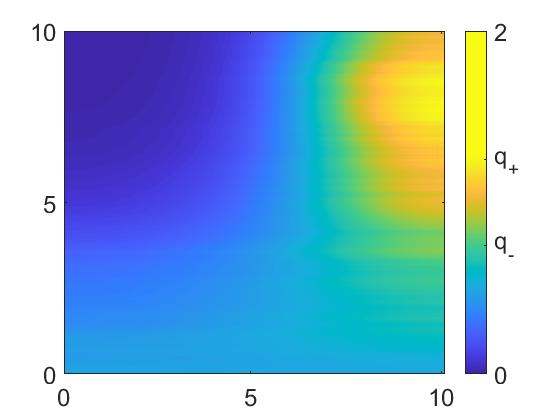}
        \put(-20,380){$t$}
        \put(530,20){$x$}
        \end{overpic}}
        
        \vspace{0mm}
        
        \subfloat[Trajectory $q$ solving \eqref{eq:syst_q}\\ with $F= \partial_t$ and $\kappa=0.5$.\\ It shows turbulence onset\\ with respect to the $L^1{([0,L])}$-norm.]{\begin{overpic}[scale=0.30]{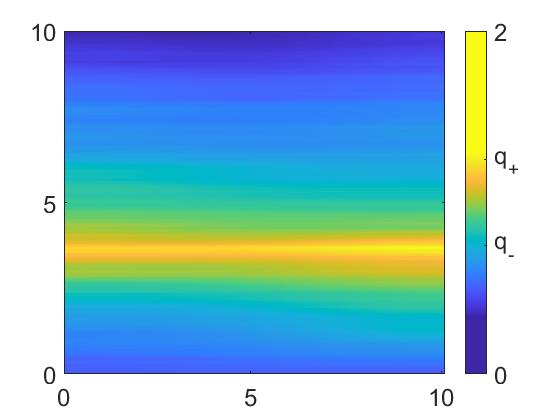}
        \put(-20,380){$t$}
        \put(530,20){$x$}
        \end{overpic}}
        \hspace{8mm}
        \subfloat[Trajectory $q$ solving \eqref{eq:syst_q}\\ with $F= \partial_t$ and $\kappa=0.5$.\\ It shows turbulence onset\\ with respect to the $L^\infty{([0,L])}$-norm.]{\begin{overpic}[scale=0.30]{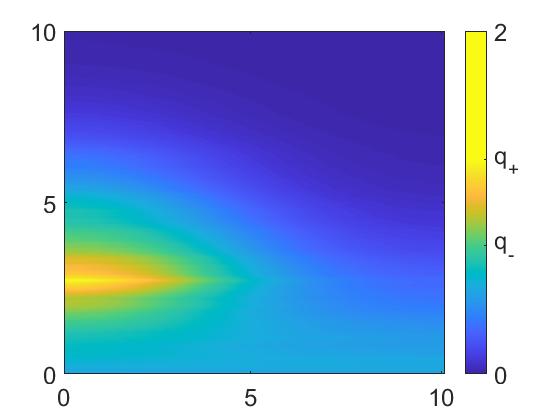}
        \put(-20,380){$t$}
        \put(530,20){$x$}
        \end{overpic}}
        
        \caption{$(a)$ and $(b)$ show trajectories of variable $q$, solution of \eqref{eq:syst_q}, indicating the turbulence onset event under additive red noise (see Appendix \ref{app:A}), $1.5=\sigma_{\text{R}}>\sigma_{\text{I}}=\sigma_{\text{S}}=0$ and $F=\operatorname{Id}$; conversely, in $(c)$, $(d)$, $(e)$ and $(f)$ we consider Stratonovich red noise (see Appendix \ref{app:A}), $0.5=\sigma_{\text{R}}>\sigma_{\text{I}}=\sigma_{\text{S}}=0$ and $F= \partial_t$. Similarly to Figure \ref{fig:Fig1}, the rare events are computed via the TAMS algorithm, for which we obtain $50$ simulations each and use the respective norm as a score function. The size and the discretization of the space-time grid, the operator $Q$, the Reynolds parameter, and the initial condition are chosen as in Figure \ref{fig:Fig1}. We set the perturbation intensity of $\xi$, solution of \eqref{eq:syst_z}, as $\sigma_{\xi}=0.1$ and its dissipation value is indicated under each subfigure. In $(a)$, $(c)$ and $(e)$ we display the rise of $\lvert\lvert q \rvert\rvert_1$ to the value $q_+ L$, whereas in $(b)$, $(d)$ and $(f)$ we capture the rise of $\lvert\lvert q \rvert\rvert_\infty$ to the value $q_+$. The simulations are obtained through the discretized mild solution formula \cite{da2014stochastic}. \\
        The parameter $\kappa$ is associated solely with the dissipation of $\xi$ in the case of additive red noise. This is in contrast with the case $F= \partial_t$, where it also indicates the intensity of a nonlinear perturbation term in \eqref{eq:syst_q}. In $(c)$ and $(d)$, the solution resembles the case of Stratonovich white noise, which corresponds to $\kappa=0$; whereas in $(e)$ and $(f)$, $\kappa$ assumes a higher value and the solution tends to depart from the turbulent state in a short time scale.}
        \label{fig:Fig2}
    \end{figure} 
    
    We indicate with $\mathcal{D}\left(\partial_{xx}^2\right)$ the domain of $\partial_{xx}^2$, which is dense in $L^2([0,L])$ for the assumed boundary conditions. We define the linear operator
    \begin{align*}
        A:= \begin{pmatrix}
            \partial^2_{xx} & \sigma_{\text{R}} \\
            0 & -\kappa
        \end{pmatrix} : \mathcal{D}\left(\partial_{xx}^2\right) \times L^2([0,L]) \to L^2([0,L]) \times L^2([0,L]) ,
    \end{align*}
    and $A^*$ the adjoint operator in respect to $L^2([0,L]) \times L^2([0,L])$. In the mild solution form, the system \eqref{eq:syst_w_SR1} is solved by
    \begin{align*}
        \begin{pmatrix}w^{\text{R}}_g(x,t) \\ \xi(x,t) \end{pmatrix} 
        = \txte^{t A} \begin{pmatrix} \text{log}\left(q_0(x)\right) \\ 0 \end{pmatrix}
        + \int_0^t \txte^{s A} \begin{pmatrix} - g(x) \\ 0 \end{pmatrix} \text{d}s
        + \int_0^t \txte^{(t-s) A} \begin{pmatrix} 0 & 0 \\ 0 & \sigma_{\xi} Q^{\frac{1}{2}} \end{pmatrix} \begin{pmatrix} \text{d}W_s \\ \text{d}W'_s \end{pmatrix} .
    \end{align*}
    The covariance operator 
    \begin{align*}
        V_t:= \begin{pmatrix}
            V^{\RomanNumeralCaps{1}}_t & V^{\RomanNumeralCaps{2}}_t \\ V^{\RomanNumeralCaps{3}}_t & V^{\RomanNumeralCaps{4}}_t
        \end{pmatrix} ,
    \end{align*}
    associated to the solution of the system at time $t>0$, satisfies the finite-time Lyapunov equation \cite[Lemma 2.45]{da2004kolmogorov},
    \begin{align} \label{eq:finite-time-lyap}
        A V_t + V_t A^* = \txte^{t A} \begin{pmatrix} 0 & 0 \\ 0 & \sigma_{\xi}^2 Q \end{pmatrix}  \txte^{t A^*} - \begin{pmatrix} 0 & 0 \\ 0 & \sigma_{\xi}^2 Q \end{pmatrix} .
    \end{align}
    For simplicity, we assume that $-\kappa$ is not an eigenvalue of $\partial^2_{xx}$. The semigroup $\txte^{t A}$ is then defined as
    \begin{align*}
        \txte^{t A}
        =\begin{pmatrix}
            \txte^{t \partial^2_{xx}} & \sigma_{\text{R}} t \int_0^1 \txte^{t s \partial^2_{xx}} \txte^{-t(1-s) \kappa} \text{d}s \\
            0 & \txte^{-t \kappa}
        \end{pmatrix}
        =\begin{pmatrix}
            \txte^{t \partial^2_{xx}} & \sigma_{\text{R}} \operatorname{R}\left( \partial^2_{xx} + \kappa \right) \left( \txte^{t \partial^2_{xx}} - \txte^{-t \kappa} \right) \\
            0 & \txte^{-t \kappa}
        \end{pmatrix} 
    \end{align*}
    and its adjoint, in respect to the $L^2([0,L])\times L^2([0,L])$ scalar product, is
    \begin{align*}
        {\txte^{t A}}^* = \txte^{t A^*}
        =\begin{pmatrix}
            \txte^{t \partial^2_{xx}} & 0 \\
            \sigma_{\text{R}} \operatorname{R}\left( \partial^2_{xx} + \kappa \right) \left( \txte^{t \partial^2_{xx}} - \txte^{-t \kappa} \right) & \txte^{-t \kappa}
        \end{pmatrix} ,
    \end{align*}
    for $\operatorname{R}$ that indicates the resolvent of an operator. Solving \eqref{eq:finite-time-lyap} implies that
    \begin{align*}
        V_t^{\RomanNumeralCaps{2}} &= \sigma_{\text{R}} \sigma_{\xi}^2 \operatorname{R}\left( \partial^2_{xx} - \kappa \right) \left( \txte^{-t \kappa} \operatorname{R}\left( \partial^2_{xx} + \kappa \right) \left( \txte^{t \partial^2_{xx}}-\txte^{-t \kappa} \right) - \frac{1-\txte^{-2 t \kappa}}{2 \kappa}\right) Q , \\
        V_t^{\RomanNumeralCaps{3}} &= \sigma_{\text{R}} \sigma_{\xi}^2 Q \left( \txte^{-t \kappa} \operatorname{R}\left( \partial^2_{xx} + \kappa \right) \left( \txte^{t \partial^2_{xx}}-\txte^{-t \kappa} \right) - \frac{1-\txte^{-2 t \kappa}}{2 \kappa}\right) \operatorname{R}\left( \partial^2_{xx} - \kappa \right) , \\
        V_t^{\RomanNumeralCaps{4}} &= \frac{\sigma_{\xi}^2}{2 \kappa} \left( 1- \txte^{-2 t \kappa} \right) Q ,
    \end{align*}
    and that the equation
    \begin{align} \label{eq:finite-time_lyap_small}
        \partial^2_{xx} V_t^{\RomanNumeralCaps{1}} + V_t^{\RomanNumeralCaps{1}} \partial^2_{xx} 
        = &- \sigma_{\text{R}}^2 \sigma_{\xi}^2 \Bigg( \operatorname{R}\left( \partial^2_{xx} - \kappa \right) \left( \txte^{-t \kappa} \operatorname{R}\left( \partial^2_{xx} + \kappa \right) \left( \txte^{t \partial^2_{xx}} - \txte^{-t \kappa} \right)  - \frac{1-\txte^{-2 t \kappa}}{2 \kappa} \right) Q \nonumber \\
        &+ Q \left( \txte^{-t \kappa} \operatorname{R}\left( \partial^2_{xx} + \kappa \right) \left( \txte^{t \partial^2_{xx}} - \txte^{-t \kappa} \right)  - \frac{1-\txte^{-2 t \kappa}}{2 \kappa} \right) \operatorname{R}\left( \partial^2_{xx} - \kappa \right) \\
        &+ \operatorname{R}\left( \partial^2_{xx} + \kappa \right) \left( \txte^{t \partial^2_{xx}} - \txte^{-t \kappa} \right) 
        Q \operatorname{R}\left( \partial^2_{xx} + \kappa \right) \left( \txte^{t \partial^2_{xx}} - \txte^{-t \kappa} \right) \Bigg) \nonumber
    \end{align}
    holds. For $(n_1,n_2)\in\mathbb{N}\times\mathbb{N}$, we label $p_{n_1,n_2}=\left\langle e_{n_1}, Q e_{n_2} \right\rangle$. For $(n_1,n_2)\neq(0,0)$, the equation \eqref{eq:finite-time_lyap_small} entails the definition of
    \begin{align} \label{eq:definition_gammaS}
            \gamma_{n_1,n_2}
            := &\left\langle e_{n_1}, V^{\RomanNumeralCaps{1}}_t e_{n_2} \right\rangle \nonumber \\
            = &\frac{\sigma_{\text{R}}^2 \sigma_{\xi}^2 p_{n_1,n_2}}{\left( \lambda_{n_1} + \lambda_{n_2} \right)} \Bigg( \frac{1}{\lambda_{n_1} + \kappa} \left( \frac{\txte^{-t \kappa} \left( \txte^{-t \lambda_{n_1}} - \txte^{-t \kappa} \right)}{\lambda_{n_1} - \kappa}  + \frac{1-\txte^{-2 t \kappa}}{2 \kappa} \right) \nonumber \\
            &+ \frac{1}{\lambda_{n_2} + \kappa} \left( \frac{\txte^{-t \kappa} \left( \txte^{-t \lambda_{n_2}} - \txte^{-t \kappa} \right)}{\lambda_{n_2} - \kappa}  + \frac{1-\txte^{-2 t \kappa}}{2 \kappa} \right)
            - \frac{\left(\txte^{-t \lambda_{n_1}}-\txte^{-t \kappa} \right) \left(\txte^{-t \lambda_{n_2}}-\txte^{-t \kappa} \right)}{\left( \lambda_{n_1}-\kappa \right) \left( \lambda_{n_2}-\kappa \right)} \Bigg) \\
            = &\frac{\sigma_{\text{R}}^2 \sigma_{\xi}^2 p_{n_1,n_2}}{\left( \lambda_{n_1} + \lambda_{n_2} \right)} \Bigg( \frac{ \lambda_{n_1} + \lambda_{n_2} + 2 \kappa}{2 \kappa \left(  \lambda_{n_1} + \kappa \right) \left(  \lambda_{n_2} + \kappa \right)}
            - \txte^{-t \left( \lambda_{n_1} + \lambda_{n_2} \right)} \frac{1}{\left( \lambda_{n_1} - \kappa \right) \left( \lambda_{n_2} - \kappa \right)} \nonumber \\
            & +  \txte^{-t \left( \lambda_{n_1} + \kappa \right)} \frac{\lambda_{n_1} + \lambda_{n_2} }{\left( \lambda_{n_1}^2 - \kappa^2 \right) \left( \lambda_{n_2} - \kappa \right)}
            +  \txte^{-t \left( \lambda_{n_2} + \kappa \right)} \frac{\lambda_{n_1} + \lambda_{n_2} }{\left( \lambda_{n_1} - \kappa \right) \left( \lambda_{n_2}^2 - \kappa^2 \right)} 
            - \txte^{-2 t \kappa} \frac{\lambda_{n_1} +\lambda_{n_2}}{2 \kappa \left( \lambda_{n_1} - \kappa \right) \left( \lambda_{n_2} - \kappa \right)}\Bigg) . \nonumber
    \end{align}
    We then consider
    \begin{align*}
        V_t= \sigma_{\xi}^2 \int_0^t \txte^{s A} \begin{pmatrix}
            0 & 0 \\ 0 & Q
        \end{pmatrix} \txte^{s A^*} \text{d} s ,
    \end{align*}
    which implies that
    \begin{align} \label{eq:definition_gamma0}
        \gamma_{0,0}
        &=\left\langle \begin{pmatrix}
            e_0 \\ 0
        \end{pmatrix}, V_t \begin{pmatrix}
            e_0 \\ 0
        \end{pmatrix} \right\rangle_{L^2([0,L])\times L^2([0,L])}
        = \left\langle e_0, V^{\RomanNumeralCaps{1}}_t e_0 \right\rangle \nonumber \\
        &= \sigma_{\text{R}}^2 \sigma_{\xi}^2 \int_0^t \left\langle e_0, \operatorname{R}\left( \partial^2_{xx} + k \right) \left( \txte^{s \partial^2_{xx}} - \txte^{-s \kappa} \right) Q \operatorname{R}\left( \partial^2_{xx} + k \right) \left( \txte^{s \partial^2_{xx}} - \txte^{-s \kappa} \right) e_0 \right\rangle \text{d}s\\
        &= \frac{\sigma_{\text{R}}^2 \sigma_{\xi}^2 p_{0,0}}{\kappa^2} \left( t - 2 \frac{1-\txte^{-t \kappa}}{\kappa} + \frac{1-\txte^{-2 t \kappa}}{2 \kappa}\right) . \nonumber
    \end{align}
    In the theorem to follow, we discuss the lower bound of the probability of rise of $u^{\text{R}}_g$, similarly to Theorem \ref{thm:het_u-I}.
    \begin{thm} \label{thm:het_u-I_2}
        We indicate as $u^{\text{R}}_g$ the strong solution of \eqref{eq:syst_u} for $x\in[0,L]$ and $t>0$. Then the following inequality holds for any non-negative function $f\in L^2([0,L])$ that is not almost everywhere zero:
        \begin{align}  \label{eq:ineq_thm_u-I_2}
            1-\Phi\left( \frac{J'' - \left( \left\langle \txte^{t \partial^2_{xx}} f, \text{log} \left( q_0 \right) \right\rangle - a_0 \left\langle e_0, g \right\rangle t - \overset{\infty}{\underset{n=1}{\sum}} a_n \frac{\left\langle e_n, g \right\rangle}{\lambda_n} \left( 1 - \txte^{-t \lambda_n} \right) \right)}
            {\left( \underset{(n_1,n_2)\in\mathbb{N}^2}{\sum} (a_{n_1} a_{n_2} \gamma_{n_1,n_2} ) \right)^{\frac{1}{2}}} \right)
             \leq \mathbb{P} \left( J'\leq \left\langle f, u^{\text{R}}_g(\cdot,t) \right\rangle \right) ,
        \end{align}
        for $\Phi$, the cumulative distribution function of a standard normal distributed random variable, $J'>0$, $a_n=\left\langle e_n, f \right\rangle$ for any $n\in\mathbb{N}$ and $\left\{ \gamma_{n_1,n_2} \right\}_{(n_1,n_2)\in\mathbb{N}\times\mathbb{N}}$ as defined in \eqref{eq:definition_gammaS} and in \eqref{eq:definition_gamma0}.
    \end{thm}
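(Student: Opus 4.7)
The plan is to mirror the proof of Theorem \ref{thm:het_u-I}, replacing the white-noise stochastic convolution by the Gaussian process obtained from the augmented linear system \eqref{eq:syst_w_SR1}. First, I would pass to the logarithmic scale: by the Cole-Hopf calculation already carried out above (the analogue of Lemma \ref{lm:u-v} for the pair $(v^{\text{R}}_g,\xi)$) and the comparison in the spirit of Lemma \ref{lm:v-w}, I have $v^{\text{R}}_g\geq w^{\text{R}}_g$ pointwise a.s., where $w^{\text{R}}_g$ is the first component of the strong solution of \eqref{eq:syst_w_SR1}. The essential point is that \eqref{eq:syst_w_SR1} is a \emph{linear} SPDE on $L^2([0,L])\times L^2([0,L])$ driven by additive Gaussian noise, so $(w^{\text{R}}_g(\cdot,t),\xi(\cdot,t))$ is Gaussian and therefore $\langle f, w^{\text{R}}_g(\cdot,t)\rangle$ is a scalar Gaussian random variable.

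Next I would compute its mean and variance explicitly. Taking expectations in the mild solution form kills the stochastic integral against $\mathrm{d}W'_s$ (and $\xi$ has zero mean for all $t$), so the red-noise contribution vanishes and
\begin{align*}
    \mathbb{E}\langle f,w^{\text{R}}_g(\cdot,t)\rangle
    = \langle \txte^{t\partial^2_{xx}}f,\log(q_0)\rangle - a_0\langle e_0,g\rangle\,t - \sum_{n=1}^{\infty} a_n\,\frac{\langle e_n,g\rangle}{\lambda_n}\bigl(1-\txte^{-t\lambda_n}\bigr),
\end{align*}
exactly as in Theorem \ref{thm:het_u-I}. For the variance I would use the fact that the covariance operator $V_t$ of the augmented Gaussian process satisfies the finite-time Lyapunov equation \eqref{eq:finite-time-lyap}, whose $(1,1)$-block $V_t^{\RomanNumeralCaps{1}}$ has already been characterised through \eqref{eq:finite-time_lyap_small} in the text. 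Writing $f=\sum_n a_n e_n$ and expanding,
\begin{align*}
    \mathrm{Var}\bigl(\langle f,w^{\text{R}}_g(\cdot,t)\rangle\bigr)
    = \langle f, V_t^{\RomanNumeralCaps{1}} f\rangle
    = \sum_{(n_1,n_2)\in\mathbb{N}^2} a_{n_1} a_{n_2}\,\gamma_{n_1,n_2},
\end{align*}
where the $\gamma_{n_1,n_2}$ are the coefficients defined in \eqref{eq:definition_gammaS} and \eqref{eq:definition_gamma0}. This turns $\mathbb{P}\bigl(\langle f,w^{\text{R}}_g(\cdot,t)\rangle\geq J''\bigr)$ into the Gaussian tail appearing on the left-hand side of \eqref{eq:ineq_thm_u-I_2}.

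Finally, on the event $\langle f,w^{\text{R}}_g(\cdot,t)\rangle\geq J''$, the same Jensen chain used at the end of the proof of Theorem \ref{thm:het_u-I} applies verbatim:
\begin{align*}
    \|f\|_1\,\txte^{\|f\|_1^{-1} J''}
    \leq \|f\|_1\,\txte^{\|f\|_1^{-1}\langle f, w^{\text{R}}_g(\cdot,t)\rangle}
    \leq \|f\|_1\,\txte^{\|f\|_1^{-1}\langle f, v^{\text{R}}_g(\cdot,t)\rangle}
    \leq \langle f,\txte^{v^{\text{R}}_g(\cdot,t)}\rangle
    = \langle f, u^{\text{R}}_g(\cdot,t)\rangle,
\end{align*}
using $v^{\text{R}}_g\geq w^{\text{R}}_g$ and Jensen (for the weighted measure $f/\|f\|_1$). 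Setting $J'=\|f\|_1\txte^{\|f\|_1^{-1}J''}$ yields \eqref{eq:ineq_thm_u-I_2}. The main obstacle is not the inequality chain, which is structurally identical to the white-noise case, but rather justifying that the solution of \eqref{eq:syst_w_SR1} is genuinely Gaussian with the covariance prescribed by \eqref{eq:finite-time-lyap} (a mild regularity check, since $-\kappa$ has been assumed not to be an eigenvalue of $\partial^2_{xx}$) and verifying that the closed-form of $\gamma_{n_1,n_2}$ correctly recovers $\langle e_{n_1},V_t^{\RomanNumeralCaps{1}}e_{n_2}\rangle$ for \emph{every} pair including $(0,0)$, where the resolvent $\operatorname{R}(\partial^2_{xx}+\kappa)$ applied to $e_0$ must be handled by the limit computation already recorded in \eqref{eq:definition_gamma0}.
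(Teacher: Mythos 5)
Your proposal is correct and follows essentially the same route as the paper: Gaussianity of the additive-noise augmented linear system \eqref{eq:syst_w_SR1}, the mean computed from the upper-triangular semigroup $\txte^{tA}$ (the red-noise contribution dropping out since $\xi$ is centred), the variance $\langle f, V_t^{\RomanNumeralCaps{1}} f\rangle=\sum_{(n_1,n_2)}a_{n_1}a_{n_2}\gamma_{n_1,n_2}$ from the finite-time Lyapunov equation, and then the comparison $v^{\text{R}}_g\geq w^{\text{R}}_g$ plus the Jensen chain with $J'=\lvert\lvert f\rvert\rvert_1\txte^{\lvert\lvert f\rvert\rvert_1^{-1}J''}$. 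No gaps.
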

    \begin{proof}
        The proof follows the same steps as in Theorem \ref{thm:het_u-I}, which we describe here in a more condensed manner. The pairing $\begin{pmatrix}w^{\text{R}}_g(x,t) \\ \xi(x,t) \end{pmatrix}$, solution of \eqref{eq:syst_w_SR1} assumes a Gaussian distribution in $L^2([0,L])\times L^2([0,L])$. In particular, we obtain
        \begin{align*}
            \mathbb{E} \begin{pmatrix}w^{\text{R}}_g(x,t) \\ \xi(x,t) \end{pmatrix} = \txte^{t A} \begin{pmatrix} \text{log}\left(q_0(x)\right) \\ 0 \end{pmatrix}
            + \int_0^t \txte^{s A} \begin{pmatrix} - g(x) \\ 0 \end{pmatrix} \text{d}s
            = \txte^{t \partial^2_{xx}} \text{log}\left(q_0(x)\right)
            - \int_0^t \txte^{s \partial^2_{xx}} g(x) \text{d}s
        \end{align*}
        and its covariance operator is
        \begin{align*}
            \sigma_{\text{R}}^2 \int_0^t \txte^{s A} \begin{pmatrix}
                0 & 0 \\ 0 & Q
            \end{pmatrix} \txte^{s A^*} \text{d}s 
            = \sigma_{\text{R}}^2 \txte^{-2 t} \begin{pmatrix}
                t^2 \int_0^1 \txte^{t s \left( \partial^2_{xx} + \operatorname{Id} \right)} \text{d}s Q \int_0^1 \txte^{t s \left( \partial^2_{xx} + \operatorname{Id} \right)} \text{d}s & t \int_0^1 \txte^{t s \left( \partial^2_{xx} + \operatorname{Id} \right)} \text{d}s Q \\
                t Q \int_0^1 \txte^{t s \left( \partial^2_{xx} + \operatorname{Id} \right)} \text{d}s & Q
            \end{pmatrix} .
        \end{align*}
        Setting $f\in L^2([0,L])$, it follows that
        \begin{align*}
            \mathbb{P}\left( \left\langle f, w^{\text{R}}_g \right\rangle  \geq J'' \right) 
            &= 1-\Phi\left( \frac{J'' - \left\langle f, \txte^{t \partial^2_{xx}} \text{log}\left(q_0\right) - \int_0^t \txte^{s \partial^2_{xx}} g \text{d}s \right\rangle }
            {\left\langle f, V^{\RomanNumeralCaps{1}} f \right\rangle^{\frac{1}{2}}} \right)\\
            &= 1-\Phi\left( \frac{J'' - \left( \left\langle \txte^{t \partial^2_{xx}} f, w^{\text{R}}_g(\cdot,0) \right\rangle - a_0 \left\langle e_0, g \right\rangle t - \overset{\infty}{\underset{n=1}{\sum}} a_n \frac{\left\langle e_n, g \right\rangle}{\lambda_n} \left( 1 - \txte^{-t \lambda_n} \right) \right)}
            {\left( \underset{(n_1,n_2)\in\mathbb{N}^2}{\sum} (a_{n_1} a_{n_2} \gamma_{n_1,n_2} ) \right)^{\frac{1}{2}}} \right) ,
        \end{align*}
        for $J''\in\mathbb{R}$. We assume that $\left\langle f, w^{\text{R}}_g \right\rangle \geq J''$ and employ Jensen's inequality to obtain
        \begin{align*}
             \lvert\lvert f \rvert\rvert_1 \text{exp} \left( \lvert\lvert f \rvert\rvert_1^{-1} J'' \right) 
             &\leq \lvert\lvert f \rvert\rvert_1 \text{exp} \left( \lvert\lvert f \rvert\rvert_1^{-1} \left\langle f, w^{\text{R}}_g \right\rangle \right)
             = \lvert\lvert f \rvert\rvert_1 \text{exp} \left( \lvert\lvert f \rvert\rvert_1^{-1} \int_0^L f(x) w^{\text{R}}_g(x,t) \text{d}x  \right) \\
             &= \lvert\lvert f \rvert\rvert_1 \text{exp} \left( \lvert\lvert f \rvert\rvert_1^{-1} \int_0^L f(x) v^{\text{R}}_g(x,t) \text{d}x  \right)
             \leq \frac{\lvert\lvert f \rvert\rvert_1}{\lvert\lvert f \rvert\rvert_1} \int_0^L f(x)\; \text{exp} (v^{\text{R}}_g(x,t)) \text{d}x \\
             &= \left\langle f, \text{exp} \left( v^{\text{R}}_g(\cdot,t) \right) \right\rangle 
             = \left\langle f, u^{\text{R}}_g(\cdot,t) \right\rangle .
        \end{align*}
        We set $J'=\lvert\lvert f \rvert\rvert_1 \text{exp} \left( \lvert\lvert f \rvert\rvert_1^{-1} J'' \right)$, which implies the statement.
    \end{proof}
    The lower bound \eqref{eq:ineq_thm_u-I_2} implies the following corollary, whose proof is equivalent to those in Corollary \ref{cor:q-I} and Corollary \ref{cor:q-I_sup}. Its statement is discussed in Section \ref{sec:5}.
    
    \begin{cor} \label{cor:q-I_2}
        In the following statements, we refer to $\Phi$ as the cumulative distribution function of a standard normal distributed random variable.
        \begin{enumerate}
            \item[(a)] We assume that $q$, strong solution of \eqref{eq:syst_q}, satisfies $0<q(x,t)\leq 2$ for any $x\in[0,L]$ and $t\in[0,T]$. Then, the following inequality holds for any non-negative function $f\in L^2([0,L])$ that is not almost everywhere zero:
            \begin{align*}
                1-\underset{0\leq t \leq T}{\text{min}}\Phi\left( \frac{\lvert\lvert f \rvert\rvert_1 \text{log} \left( \lvert\lvert f \rvert\rvert_1^{-1} J' \right) - \left\langle \txte^{t \partial^2_{xx}} f, \text{log} \left( q_0 \right) \right\rangle + \left\langle f, e_0 \right\rangle L^{\frac{1}{2}} t}
                {\left( \underset{(n_1,n_2)\in\mathbb{N}^2}{\sum} (a_{n_1} a_{n_2} \gamma_{n_1,n_2} ) \right)^{\frac{1}{2}}} \right)
                 \leq \mathbb{P} \left( J'\leq \underset{0\leq t \leq T}{\text{sup}} \left\langle f, q(\cdot,t) \right\rangle \right) ,
            \end{align*}
            for $J'>0$, $a_n=\left\langle e_n, f \right\rangle$ for any $n\in\mathbb{N}$ and $\left\{ \gamma_{n_1,n_2} \right\}_{(n_1,n_2)\in\mathbb{N}\times\mathbb{N}}$ as defined in \eqref{eq:definition_gammaS} and in \eqref{eq:definition_gamma0}.
            \item[(b)] For $q$, strong solution of \eqref{eq:syst_q} for any $x\in[0,L]$ and $t\in[0,T]$, the following inequality holds
            \begin{align} \label{eq:cor_4.8_b}
                &1-\underset{0\leq t \leq T}{\text{min}}\Phi\left( \frac{L^{\frac{1}{2}} \kappa}{\sigma_{\text{R}} \sigma_{\xi} p_{0,0}^\frac{1}{2}} \frac{t^{\frac{1}{2}}}
                {\left( t - 2 \frac{1-\txte^{-t \kappa}}{\kappa} + \frac{1-\txte^{-2 t \kappa}}{2 \kappa}\right)^{\frac{1}{2}}} \left( t^{-\frac{1}{2}} \left( \text{log}\left( J \right) - L^{-1} \int_0^L \text{log} \left( q_0(x) \right) \text{d}x \right) + t^{\frac{1}{2}} \right) \right) \nonumber\\
                &=1-\underset{0\leq t \leq T}{\text{min}}\Phi\left( \frac{L^{\frac{1}{2}}}{\gamma_{0,0}^{\frac{1}{2}}} \left( \text{log} \left( J \right) - L^{-1} \int_0^L \text{log}\left(q_0(x)\right) \text{d}x + t\right)
                 \right)
                \leq \mathbb{P} \left( J \leq \underset{0\leq t \leq T}{\text{sup}} \lvert\lvert q(\cdot,t) \rvert\rvert_\infty \right) , 
            \end{align}
            for $0<J<2$.
        \end{enumerate}
    \end{cor}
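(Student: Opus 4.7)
The plan is to derive both parts of Corollary \ref{cor:q-I_2} as specialisations of Theorem \ref{thm:het_u-I_2}, following the template of Corollaries \ref{cor:q-I} and \ref{cor:q-I_sup}. The only new ingredients compared with the white-Stratonovich case are (i) the explicit form of the variance encoded by the coefficients $\gamma_{n_1,n_2}$, and (ii) the need to apply Lemma \ref{lm:q-u}(a) in a setting where the red-noise term is present; since Lemma \ref{lm:q-u}(a) is stated for arbitrary $\sigma_{\text{R}} \geq 0$, it transfers directly.

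For part (a), I would set $g \equiv L^{1/2} e_0 \equiv 1$, so that $\langle e_n, g\rangle$ vanishes for $n\geq 1$ and equals $L^{1/2}$ for $n=0$; the drift correction in the numerator of the $\Phi$-argument in \eqref{eq:ineq_thm_u-I_2} then collapses to $-a_0 L^{1/2} t$. Under the hypothesis $0 < q \leq 2$, Lemma \ref{lm:q-u}(a) yields the pathwise comparison $u^{\text{R}}_1(\cdot,t) \leq q(\cdot,t)$ a.e.\ under the same sample of $W$, whence
\[
\mathbb{P}\!\left(J' \leq \left\langle f, u^{\text{R}}_1(\cdot,t)\right\rangle\right) \leq \mathbb{P}\!\left(J' \leq \left\langle f, q(\cdot,t)\right\rangle\right) \leq \mathbb{P}\!\left(J' \leq \underset{0\leq t \leq T}{\sup}\left\langle f, q(\cdot,t)\right\rangle\right)
\]
for every $t\in[0,T]$. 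Since the right-hand side is independent of $t$, I may take the maximum on the left over $t\in[0,T]$, which by monotonicity amounts to minimising the $\Phi$-argument, yielding statement (a).

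For part (b), I would specialise (a) to $f \equiv 1$ (i.e., $f = L^{1/2} e_0$). Then $\lvert\lvert f \rvert\rvert_1 = L$, $a_0 = L^{1/2}$, $a_n = 0$ for $n\geq 1$, so $\langle f, u^{\text{R}}_1(\cdot,t)\rangle = \lvert\lvert u^{\text{R}}_1(\cdot,t)\rvert\rvert_1$ and the double sum $\sum a_{n_1} a_{n_2} \gamma_{n_1,n_2}$ collapses to $a_0^2 \gamma_{0,0} = L \gamma_{0,0}$. Setting $J' = LJ$ and substituting the closed form \eqref{eq:definition_gamma0} for $\gamma_{0,0}$ yields, after elementary rearrangement, the two equivalent expressions displayed in \eqref{eq:cor_4.8_b}. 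To shed the bound $q\leq 2$, I would introduce the stopping time $\upsilon^{\text{R}}_J := \inf\{t\geq 0 : \lvert\lvert u^{\text{R}}_1(\cdot,t)\rvert\rvert_\infty > J\}$ and, repeating the case-split from Lemma \ref{lm:q-u}(b)--(c), conclude that $\tau_J \leq \upsilon^{\text{R}}_J$ whenever $J\leq 2$. Combining this with H\"older's inequality $\lvert\lvert u^{\text{R}}_1\rvert\rvert_1 \leq L\lvert\lvert u^{\text{R}}_1\rvert\rvert_\infty$ gives
\[
\mathbb{P}\!\left(LJ \leq \underset{0\leq t \leq T}{\sup}\lvert\lvert u^{\text{R}}_1(\cdot,t)\rvert\rvert_1\right) \leq \mathbb{P}(\upsilon^{\text{R}}_J \leq T) \leq \mathbb{P}(\tau_J \leq T) = \mathbb{P}\!\left(J \leq \underset{0\leq t \leq T}{\sup}\lvert\lvert q(\cdot,t)\rvert\rvert_\infty\right),
\]
which completes (b).

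I expect the main obstacle to be purely bookkeeping: verifying that the double sum truly collapses to exactly $L\gamma_{0,0}$ (keeping track of the $p_{n_1,n_2}$, $\lambda_n$ and $\kappa$ dependencies) and checking the algebraic equivalence of the two forms of the $\Phi$-argument in \eqref{eq:cor_4.8_b}, in particular that the factor $t^{1/2}\cdot t^{-1/2}$ absorbs correctly into $\gamma_{0,0}^{1/2}$. No new probabilistic input beyond Theorem \ref{thm:het_u-I_2}, Lemma \ref{lm:q-u} and the stopping-time comparison is needed.
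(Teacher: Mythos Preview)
Your proposal is correct and follows essentially the same route as the paper, which simply states that the proof is ``equivalent to those in Corollary \ref{cor:q-I} and Corollary \ref{cor:q-I_sup}''; you have correctly unpacked what this means, including the specialization $g\equiv 1$ for part (a), the further specialization $f\equiv 1$, $J'=LJ$ and the use of H\"older plus the stopping-time comparison $\tau_J\leq\upsilon^{\text{R}}_J$ for part (b). The only cosmetic point is that for (b) you should apply Theorem \ref{thm:het_u-I_2} directly to $u^{\text{R}}_1$ rather than ``specialise (a)'' (which already carries the $q\leq 2$ hypothesis), but your subsequent discussion makes clear that this is exactly what you intend.
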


    \begin{remark} \label{rmk:other_red}

    The system \eqref{eq:syst_w_SR1} has a strong solution with variable $w^{\text{R}}_g$ characterized by positive autocorrelation \cite{morr2022red}, conversely to \eqref{eq:syst_w_SR2}. Another core difference in the models is that, for $g\equiv 0$, the covariance of the strong solution along $e_0$ diverges in \eqref{eq:syst_w_SR1} in the time limit, whereas it converges \cite{kuehn2021warning} in \eqref{eq:syst_w_SR2}. Although the models differ greatly, the lower bounds can be obtained through similar methods (see Appendix \ref{app:B}). Furthermore, for $\kappa>0$, the statement in Theorem \ref{thm:het_u-I_2} holds for $w^{\text{R}}_g$ defined in the system \eqref{eq:syst_w_SR2} by setting the constants
        \begin{align} \label{eq:alternative_gammas}
                \gamma_{n_1,n_2}
                = 
                &\frac{\sigma_{\text{R}}^2 \sigma_{\xi}^2 p_{n_1,n_2}}{\left( \lambda_{n_1} + \lambda_{n_2} \right)} \Bigg( \frac{2 \lambda_{n_1} \lambda_{n_2} + \left( \lambda_{n_1} + \lambda_{n_2} \right) \kappa}{2 \left(  \lambda_{n_1} + \kappa \right) \left(  \lambda_{n_2} + \kappa \right)}
                - \txte^{-t \left( \lambda_{n_1} + \lambda_{n_2} \right)} \frac{ \lambda_{n_1} \lambda_{n_2}}{\left( \lambda_{n_1} - \kappa \right) \left( \lambda_{n_2} - \kappa \right)} \nonumber \\
                & +  \txte^{-t \left( \lambda_{n_1} + \kappa \right)} \frac{\kappa \lambda_{n_1} \left( \lambda_{n_1} + \lambda_{n_2} \right)}{\left( \lambda_{n_1}^2 - \kappa^2 \right) \left( \lambda_{n_2} - \kappa \right)}
                +  \txte^{-t \left( \lambda_{n_2} + \kappa \right)} \frac{\kappa \lambda_{n_2} \left( \lambda_{n_1} + \lambda_{n_2} \right)}{\left( \lambda_{n_1} - \kappa \right) \left( \lambda_{n_2}^2 - \kappa^2 \right)} \\
                & - \txte^{-2 t \kappa} \frac{\kappa \left(\lambda_{n_1} +\lambda_{n_2}\right)}{2 \left( \lambda_{n_1} - \kappa \right) \left( \lambda_{n_2} - \kappa \right)}\Bigg) , \nonumber
        \end{align}
    for $(n_1,n_2)\in \mathbb{N}\times\mathbb{N}\setminus (0,0)$, and
        \begin{align} \label{eq:alternative_gammas_0}
            &\gamma_{0,0}
            = \sigma_{\text{R}}^2 \sigma_{\xi}^2 p_{0,0} \frac{1-\txte^{-2 t \kappa}}{2 \kappa} ,
        \end{align}
    and assuming, for simplicity, that $-\kappa$ is not an eigenvalue of the Laplacian. For system assumptions associated to \eqref{eq:syst_w_SR2}, the statement in Corollary \ref{cor:q-I_2} $(a)$ is also equivalent, with the updated constants $\left\{\gamma_{n_1,n_2}\right\}_{(n_1,n_2)\in\mathbb{N}\times\mathbb{N}}$ shown in \eqref{eq:alternative_gammas} and \eqref{eq:alternative_gammas_0}. In contrast, the corresponding inequality to \eqref{eq:cor_4.8_b} is
        \begin{align} \label{eq:RS_last}
                &1-\underset{0\leq t \leq T}{\text{min}}\Phi\left( \frac{(2 \kappa L)^{\frac{1}{2}}}{\sigma_{\text{R}} \sigma_{\xi} p_{0,0}^\frac{1}{2}} \frac{t^{\frac{1}{2}}}
                {\left( 1-\txte^{-2 t \kappa} \right)^{\frac{1}{2}}} \left( t^{-\frac{1}{2}} \left( \text{log}\left( J \right) - L^{-1} \int_0^L \text{log} \left( q_0(x) \right) \text{d}x \right) + t^{\frac{1}{2}} \right) \right)\\
                &=1-\underset{0\leq t \leq T}{\text{min}}\Phi\left( \frac{L^{\frac{1}{2}}}{\gamma_{0,0}^{\frac{1}{2}}} \left( \text{log} \left( J \right) - L^{-1} \int_0^L \text{log}\left(q_0(x)\right) \text{d}x + t\right)
                 \right)
                \leq \mathbb{P} \left( J \leq \underset{0\leq t \leq T}{\text{sup}} \lvert\lvert q(\cdot,t) \rvert\rvert_\infty \right) , \nonumber
        \end{align}
    for $0<J<2$ and $\gamma_{0,0}$ defined in \eqref{eq:alternative_gammas_0}.
    \end{remark}
    
\section{Comparison of methods} \label{sec:5}
    
In the previous sections, we have studied lower bounds to the probability of turbulence onset in system \eqref{eq:syst_q} under different noise assumptions. Namely, we considered time-white noise in It\^o sense, in Section \ref{sec:3}, and time-white or time-red noise in Stratonovich sense, in Section \ref{sec:4}. Throughout the paper, the noise is assumed to be white in space along fixed modes. The techniques employed in the two sections differ in nature yet manage to capture similar characteristics of the system.
\smallskip

We first discuss the rise of turbulence under It\^o noise. Lemma \ref{lm:inequality_quadratic_variation} and Lemma \ref{lm:galton-watson} were based on methods introduced in \cite{mueller2000critical,mueller1993blowup}. The key idea is to counter the drift effect on the solution of \eqref{eq:syst_gamma} by applying a convolution operator through the fundamental solution of the cable equation, reserved in time. A stark difference with the assumptions in \cite{mueller2000critical,mueller1993blowup} is that their work studies the heat equation, i.e., $\alpha=0$. In fact, under strong linear drift dissipation, the left-hand side term in \eqref{eq:in_nom_infty_1} can be negative, even for large values of $T$, thus rendering the inequality trivial. This is in contrast with \cite{mueller2000critical}, where the lower bound increases with $T$. In \cite{mueller2000critical,mueller1993blowup}, a second lower bound is obtained and employed to prove the blow-up of the mild solution in finite time through an iterative method. The iteration step is based on rescaling the space interval. Once again, the method is not equivalent under the assumption $\alpha>0$ as the second rescaling affects the linear dissipation and does not return the initial system, thus affecting the iteration at each step (see \cite[Lemma 2.6, Proposition 3.2, Section 4]{mueller2000critical}).
\smallskip

We note that the lower bounds in Corollary \ref{cor:mueller} are also valid for $q$ mild solution of \eqref{eq:syst_q} under white Stratonovich noise. In fact, we set $u^{\text{S}}_1=u^{\text{S}}_1(x,t)$ the mild solution of \eqref{eq:syst_gamma} for $\hat{\sigma}=\sigma_{\text{S}}>\sigma_{\text{I}}=\sigma_{\text{R}}=0$ and $u^{\text{I}}_1=u^{\text{I}}_1(x,t)$ the mild solution of \eqref{eq:syst_gamma} for $\hat{\sigma}=\sigma_{\text{I}}>\sigma_{\text{S}}=\sigma_{\text{R}}=0$. We then study $\hat{u}=u^{\text{S}}_1-u^{\text{I}}_1$. By assuming $Q$ to be trace-class and by the definition of the It\^o-Stratonovich correction term in \cite{barna2019analytic,gardiner1985handbook}, it is the mild solution of 
\begin{align*}
    \left\{
    \begin{alignedat}{2}
    \text{d} \hat{u}(x,t) &= \left( \partial_{xx}^2 \hat{u}(x,t)- \hat{u}(x,t) + u^{\text{I}}(x,t) \frac{\hat{\sigma}^2 }{2} \sum_{i=0}^\infty \zeta_i b_i(x)^2 \right) \text{d}t + \hat{\sigma} \hat{u}(x,t) Q^{\frac{1}{2}} \text{d}W_t , \\
    \partial_x \hat{u}(0,t) &= \partial_x \hat{u}(L,t) = 0 , \\
    \hat{u}(x,0) &\equiv 0 .
    \end{alignedat}
    \right.
\end{align*}
Following the steps in the proof of Lemma \ref{lm:q-u}, it is implied that $u^{\text{S}}_1\geq u^{\text{I}}_1$ almost surely. Moreover, we know that $\upsilon^{\text{S}}_{J} \leq \upsilon^{\text{I}}_{J}$ holds.
\smallskip

The methods in Section \ref{sec:3} are applied to the mild solutions of the considered stochastic partial differential equations. Conversely, although in Section \ref{sec:4} the noise is assumed in Stratonovich sense, the use of It\^o's Lemma through the inverse Cole-Hopf transformation requires the study of strong solutions. In order to satisfy the existence of such a solution, the noise is assumed to perturb the system along a finite number of modes, then affected by the multiplication term in the noise intensity \cite[Section 6.5, Chapter 7]{da2014stochastic}. Similar results to those in Section \ref{sec:4} can also be proven in an equivalent manner under It\^o noise assumptions, yet, on a logarithmic scale, an additional negative term appears in the KPZ equation \cite{hairer2013solving}. For instance, under the assumption of white noise, the system obtained in correspondence to \eqref{eq:syst_v} is
\begin{align*}
    \left\{
    \begin{alignedat}{2}
    \text{d} v^\text{I}_{g}(x,t) &= \left( \partial_{xx}^2 v^\text{I}_{g}(x,t) + \left(\partial_x v^\text{I}_{g}(x,t) \right)^2 - g(x) - \frac{\sigma_{\text{I}}^2}{2} \sum_{i=0}^m \zeta_i b_i(x)^2 \right) \text{d}t + \sigma_{\text{I}} Q^{\frac{1}{2}} \text{d}W_t , \\
    \partial_x v^\text{I}_{g}(0,t) &= \partial_x v^\text{I}_{g}(L,t) = 0 , \\
    v^\text{I}_{g}(x,0) &= \text{log}\left(q_0(x)\right) .
    \end{alignedat}
    \right.
\end{align*}
Then, the step in Lemma \ref{lm:v-w} implies the discard of the nonlinear shear deformation non-negative term, but not of the term $- \frac{\sigma_{\text{I}}^2}{2} \sum_{i=0}^m \zeta_i b_i(x)^2$, which has to be included in the moving average of Theorem \ref{thm:het_u-I}, Corollary \ref{cor:q-I} and Corollary \ref{cor:q-I_sup}.
\smallskip

Although the lower bounds in Section \ref{sec:4} are always positive, they are characterized by severely different behaviours in time. The section contains results that manage to study the initiation of turbulence in certain regions of the domain in Corollary \ref{cor:q-I} and Corollary \ref{cor:q-I_2}. Nevertheless, we focus on the description of jumps in the $L^\infty([0,L])$-norm, which are easier to observe. In Corollary \ref{cor:q-I_sup}, the sum of two terms in the argument of the function $\Phi$ defines the nature of the bound. In fact, the term $t^{-\frac{1}{2}}$, multiplied by a positive parameter dependent on initial conditions, refers to the effect of the noise to enable the possibility of a jump, thus decreasing with $t$ the argument in the distribution function. Conversely, the term $t^{\frac{1}{2}}$ takes into account the effect of the drift component, constant in time, which hinders the transition on long times. The clash of these effects implies that at
\begin{align*}
    t^*=\text{log}\left( J \right) - L^{-1} \int_0^L \text{log} \left( q_0(x) \right) \text{d}x
\end{align*}
the lower bound to the probability of jump at height $J$ in the $L^1([0,L])$-norm reaches its peak, which is maintained for longer times for the statement in the proof of Corollary \ref{cor:mueller}. 
A similar behaviour is observed in Corollary \ref{cor:mueller}, where the best opportunity of occurrence of jump is captured.
\smallskip

We considered in Subsection \ref{subsec:red} red noise in time. We note that, in previous cases, the initiation of turbulence is not a memoryless process since it is already affected by its initial state. The inclusion of red noise in the sense \eqref{eq:syst_w_SR1} is justified by its relevance in climate application \cite{morr2024detection,morr2022red} and the positive autocorrelation of the system in logarithmic scale along the direction $e_0$. The inequality \eqref{eq:cor_4.8_b} in Corollary \ref{cor:q-I_2} $(b)$ resembles the statement in Corollary \ref{cor:q-I_sup}, but the parameter $\kappa$ assumes a twofold role: it is proportional to the term
\begin{align} \label{eq:red-term_0}
    \frac{L^{\frac{1}{2}} \kappa}{\sigma_{\text{R}} \sigma_{\xi} p_{0,0}^\frac{1}{2}}
\end{align}
and appears in the term
\begin{align} \label{eq:red-term}
    \frac{t^{\frac{1}{2}}}{\left( t - 2 \frac{1-\txte^{-t \kappa}}{\kappa} + \frac{1-\txte^{-2 t \kappa}}{2 \kappa}\right)^{\frac{1}{2}}} 
    = \left(\frac{t}{\int_0^t \left(1-\txte^{-s \kappa} \right)^2 \text{d} s}\right)^{\frac{1}{2}} > 1 ,
\end{align}
due to its dissipative role in \eqref{eq:syst_z}. Both perspectives indicate that $\kappa>0$ hinders the jump with respect to the white noise assumption, but their effects differ. In fact, increasing $\kappa$ leads to an increase in \eqref{eq:red-term_0} and a decrease in \eqref{eq:red-term}. Still, the concept of a best opportunity to jump before being fully controlled by the drift is visible since, for fixed $\kappa>0$, the limit of \eqref{eq:red-term} in $t\to+\infty$ is finite. Nevertheless, the limit of \eqref{eq:red-term} in $t\to 0$ is infinite, indicating a smaller probability of jump on short times in comparison to white noise in time. Lastly, we notice that the limit $\kappa=0$ implies $\gamma_{0,0}= \sigma_{\text{R}}^2 \sigma_{\xi}^2 p_{0,0} \frac{t^3}{3}$, which means that the lower bound is a definitely strictly increasing function in $t$.
\smallskip

At the end of Subsection \ref{subsec:red}, we consider an alternative type of red noise, as discussed in \cite{kuehn2021warning}. The role of $\kappa$ is different from the previous case. In fact, $\kappa$ indicates both the dissipativity in \eqref{eq:syst_z} and is related to the off-diagonal entry in the linear drift term (see Appendix \ref{app:B}). The term in \eqref{eq:RS_last},
\begin{align} \label{eq:red-term_2}
    \frac{t^{\frac{1}{2}}}{\left( \frac{1-\txte^{-2 t \kappa}}{2 \kappa}\right)^{\frac{1}{2}}} 
    = \left(\frac{t}{\int_0^t \txte^{-2 s \kappa} \text{d} s}\right)^{\frac{1}{2}}
\end{align}
is finite in $t\to 0$ and infinite in $t\to +\infty$. This translates to a similar behaviour to the lower bound with the assumption of white noise in Corollary \ref{cor:q-I} on small time scales and a smaller lower bound to the probability of jump on large time scales $t$. This results from the interplay between the dissipativity in the Ornstein-Uhlenbeck process defined in \eqref{eq:syst_z} and the dissipativity in \eqref{eq:syst_u}. In this case, increasing $\kappa>0$ indicates in \eqref{eq:red-term_2} the decrease in the bound of jump at a fixed time. The limit $\kappa=0$ is equivalent to the time-white noise assumption in Subsection \ref{subsec:white}.

\section*{Conclusion}

In this paper, we have established lower bounds for the probability of turbulence initiation in a simplified model of plane Couette flow. The phenomenon is modeled through an SPDE on an interval under various forms of multiplicative Gaussian noise. The simulation of similar systems leads to the study of other rare events in the pipe flow \cite{gome2024phase,gome2022extreme}. The application of equivalent algorithms shows the possibility of such an occurrence, whose probability has not been estimated analytically so far. Central to our approach is the comparison between the linearized and original models, starting with a rigorous treatment of It\^o white noise. Using the fundamental solution of the cable equation under Neumann or periodic boundary conditions, we counter the drift term and obtain an observable in the form of a martingale. The bound is then described following techniques employed in the study of finite-time blow-up of the mild solution of the stochastic heat equation \cite{mueller2000critical,mueller1993blowup}.
\smallskip

Expanding this framework, we employ logarithmic-scale comparison methods to address systems perturbed by time-white Stratonovich noise, time-red additive noise, and time-red Stratonovich noise, with the latter two modeled through coupling with Ornstein–Uhlenbeck processes. This approach requires different assumptions on the noise term to ensure the existence of a strong solution for the model. Our analysis reveals that, whereas the lower bounds vary across noise types, they consistently reflect a key dynamical feature: the probability of turbulence onset in a fixed time declines beyond an optimal jumping time. This property is implied by the invariance of the linearized model under a suitable rescaling of the solution.
\smallskip

Finally, we illustrate the adaptability of our techniques to other types of noise, highlighting their potential for broader applicability in turbulence modeling. These findings provide a deeper understanding of stochastic influences on turbulence initiation and offer ground for future investigations into noise-driven dynamics in SPDEs perturbed by multiplicative Gaussian noise.

\section*{Acknowledgments}
Author PB wants to thank Andreas Morr for his valuable insights on red noise and climate science.

\newpage
\bibliographystyle{abbrv}
\bibliography{Ref}

\newpage
\appendix

\section{Appendix: Turbulence from the It\^o noise perspective}
    \label{app:A}
    Throughout the paper, system \eqref{eq:syst_q} is studied under different noise and parameters assumptions. In this appendix, we address the various cases and translate them to the It\^o noise perspective through the It\^o-Stratonovich correction term \cite{gardiner1985handbook,twardowska2004relation}. The boundary conditions are assumed to be homogenous Neumann, and the initial condition is set at $q(x,0)=q_0(x)$.
    \begin{itemize}
        \item   In Section \ref{sec:3}, the noise is assumed to be white and in It\^o sense, i.e., $\sigma_{\text{S}} = \sigma_{\text{R}}=0$. The system studied is characterized, therefore, by the form
        \begin{align*}
            \text{d} q(x,t) = \left( \partial_{xx}^2 q(x,t) - q(x,t) + (r+1) q(x,t)^2 (2-q(x,t)) \right) \text{d}t + \sigma_{\text{I}} q(x,t) Q^{\frac{1}{2}} \text{d}W_t ,
        \end{align*}
        for $x\in[0,L]$ and $t>0$.
        \item   In Subsection \ref{subsec:white}, the noise is assumed white and in Stratonovich sense, i.e., $\sigma_{\text{I}} = \sigma_{\text{R}}=0$. The first equation in \eqref{eq:syst_q} with It\^o noise is then
        \begin{align*}
            \text{d} q(x,t) = \left( \partial_{xx}^2 q(x,t) - q(x,t) + (r+1) q(x,t)^2 (2-q(x,t)) + \frac{\sigma_{\text{S}}^2}{2} q(x,t) \sum_{i=0}^m \left( \zeta_i b_i(x)^2 \right) \right) \text{d}t + \sigma_{\text{S}} q(x,t) Q^{\frac{1}{2}} \text{d}W_t .
        \end{align*}
        \item   In Subsection \ref{subsec:red}, we discuss the effect of red in time Stratonovich noise on the turbulence system, i.e., $\sigma_{\text{R}}>\sigma_{\text{I}} =\sigma_{\text{S}}=0$. The variable $q$ is coupled to the Ornstein-Uhlenbeck process, solution of \eqref{eq:syst_z}, depending on the choice of the operator $F$. In \eqref{eq:syst_w_SR1}, we set $F=\operatorname{Id}$. Then, the first equation in \eqref{eq:syst_q} coupled with \eqref{eq:syst_z} can be interpreted as
        \begin{align*}
            \text{d}\begin{pmatrix}
                q(x,t) \\ \xi(x,t)
            \end{pmatrix} 
            = \begin{pmatrix}
                \partial_{xx}^2 q(x,t) - q(x,t) + (r+1) q(x,t)^2 (2-q(x,t)) + \sigma_{\text{R}} q(x,t) \xi(x,t) \\
                - \kappa \xi
            \end{pmatrix} \text{d}t
            + \begin{pmatrix} 0 & 0 \\ 0 & \sigma_{\xi} Q^{\frac{1}{2}} \end{pmatrix} \begin{pmatrix} \text{d}W_t \\ \text{d}W'_t \end{pmatrix} .
        \end{align*}
        The noise is additive in $L^2([0,L])\times L^2([0,L])$, which implies that the It\^o-Stratonovich correction term is null and the equation is equivalent regardless of the interpretation of the noise.    
        \item   In Remark \ref{rmk:other_red}, we discuss the lower bound to the probability of initiation of turbulence under Stratonovich red noise defined by $F= \partial_t$. The equation defining the behaviour of the variables $q$ and $\xi$ in time is, therefore,
        \begin{align*}
            \text{d}\begin{pmatrix}
                q(x,t) \\ \xi(x,t)
            \end{pmatrix} 
            = &\begin{pmatrix}
                \partial_{xx}^2 q(x,t) - q(x,t) + (r+1) q(x,t)^2 (2-q(x,t)) \\
                - \kappa \xi
            \end{pmatrix} \text{d}t
            + \begin{pmatrix}
                \sigma_{\text{R}} q(x,t) \\ 0
            \end{pmatrix} \circ \text{d}\xi(x,t)
            + \begin{pmatrix} 0 & 0 \\ 0 & \sigma_{\xi} Q^{\frac{1}{2}} \end{pmatrix} \begin{pmatrix} \text{d}W_t \\ \text{d}W'_t \end{pmatrix} \\
            = &\begin{pmatrix}
                \partial_{xx}^2 q(x,t) - q(x,t) + (r+1) q(x,t)^2 (2-q(x,t)) - \sigma_{\text{R}} \kappa q(x,t) \xi(x,t) \\
                - \kappa \xi
            \end{pmatrix} \text{d}t \\
            &+ \begin{pmatrix} q(x,t) & 0 \\ 0 & 1 \end{pmatrix} \circ 
            \begin{pmatrix} 0 & \sigma_{\text{R}} \sigma_{\xi} Q^{\frac{1}{2}} \\ 0 & \sigma_{\xi} Q^{\frac{1}{2}} \end{pmatrix} \begin{pmatrix} \text{d}W_t \\ \text{d}W'_t \end{pmatrix} ,
        \end{align*}
        with noise in Stratonovich sense. Indicating with $*$ the adjoint operator in $L^2([0,L])\times L^2([0,L])$, the operator 
        \begin{align*}
            \begin{pmatrix} 0 & \sigma_{\text{R}} \sigma_{\xi} Q^{\frac{1}{2}} \\ 0 & \sigma_{\xi} Q^{\frac{1}{2}} \end{pmatrix} 
            \begin{pmatrix} 0 & \sigma_{\text{R}} \sigma_{\xi} Q^{\frac{1}{2}} \\ 0 & \sigma_{\xi} Q^{\frac{1}{2}} \end{pmatrix}^*
            = \sigma_{\xi}^2 \begin{pmatrix} 0 & \sigma_{\text{R}} Q^{\frac{1}{2}} \\ 0 & Q^{\frac{1}{2}} \end{pmatrix} 
            \begin{pmatrix} 0 & 0 \\ \sigma_{\text{R}} Q^{\frac{1}{2}} & Q^{\frac{1}{2}} \end{pmatrix}
            = \sigma_{\xi}^2 \begin{pmatrix} \sigma_{\text{R}}^2 Q & \sigma_{\text{R}} Q \\ \sigma_{\text{R}} Q & Q \end{pmatrix}
        \end{align*}
        is characterized by purely discrete spectrum, composed by $\left\{ \text{\Coppa}_i,\text{\Coppa}_i' \right\}_{i\in\mathbb{N}}$. These eigenvalues are defined as
        \begin{align*}
            \text{\Coppa}_i := \left( 1 + \sigma_{\text{R}}^2 \right) \sigma_{\xi}^2 \zeta_i 
            \qquad \text{and} \qquad
            \text{\Coppa}_i' := 0 ,
        \end{align*}
        for all $i\in\mathbb{N}$. The corresponding eigenbasis in $L^2([0,L])\times L^2([0,L])$ is composed by
        \begin{align*}
            B_i(x) := \left( 1 + \sigma_{\text{R}}^2 \right)^{-\frac{1}{2}} 
            \begin{pmatrix}
                \sigma_{\text{R}} b_i(x) \\ b_i(x)
            \end{pmatrix} 
            \qquad \text{and} \qquad
            B_i'(x) := \left( 1 + \sigma_{\text{R}}^2 \right)^{-\frac{1}{2}} 
            \begin{pmatrix}
                b_i(x) \\ - \sigma_{\text{R}} b_i(x)
            \end{pmatrix}  ,
        \end{align*}
        for all $i\in\mathbb{N}$ and $x\in[0,L]$. The equation defining $q$ and $\xi$ is, then,
        \begin{align*}
            \text{d}\begin{pmatrix}
                q(x,t) \\ \xi(x,t)
            \end{pmatrix}
            = &\begin{pmatrix}
                \partial_{xx}^2 q(x,t) - q(x,t) + (r+1) q(x,t)^2 (2-q(x,t)) - \sigma_{\text{R}} \kappa q(x,t) \xi(x,t) \\
                - \kappa \xi
            \end{pmatrix} \text{d}t \\
            & + \frac{1}{2  \left( 1 + \sigma_{\text{R}}^2 \right) } \begin{pmatrix}
                 q(x,t) \partial_q\left(q(x,t)\right) \overset{m}{\underset{i=0}{\sum}} \left( \text{\Coppa}_i \sigma_{\text{R}}^2 b_i(x)^2 + \text{\Coppa}_i' b_i(x)^2 \right) \\ 
                 \partial_{\xi}\left( 1 \right) \overset{m}{\underset{i=0}{\sum}} \left( \text{\Coppa}_i b_i(x)^2 + \text{\Coppa}_i' \sigma_{\text{R}}^2 b_i(x)^2 \right)
            \end{pmatrix}
            + \begin{pmatrix} q(x,t) & 0 \\ 0 & 1 \end{pmatrix} \begin{pmatrix} 0 & \sigma_{\text{R}} \sigma_{\xi} Q^{\frac{1}{2}} \\ 0 & \sigma_{\xi} Q^{\frac{1}{2}} \end{pmatrix} \begin{pmatrix} \text{d}W_t \\ \text{d}W'_t \end{pmatrix} \\
            = &\begin{pmatrix}
                \partial_{xx}^2 q(x,t) - q(x,t) + (r+1) q(x,t)^2 (2-q(x,t)) - \sigma_{\text{R}} \kappa q(x,t) \xi(x,t) + \frac{\sigma_{\text{R}}^2 \sigma_{\xi}^2}{2} q(x,t) \overset{m}{\underset{i=0}{\sum}} \left( \zeta_i b_i(x)^2 \right) \\
                - \kappa \xi
            \end{pmatrix} \text{d}t \\
            &+ \begin{pmatrix} q(x,t) & 0 \\ 0 & 1 \end{pmatrix} \begin{pmatrix} 0 & \sigma_{\text{R}} \sigma_{\xi} Q^{\frac{1}{2}} \\ 0 & \sigma_{\xi} Q^{\frac{1}{2}} \end{pmatrix} \begin{pmatrix} \text{d}W_t \\ \text{d}W'_t \end{pmatrix} ,
        \end{align*}
        with noise interpreted in It\^o sense.
    \end{itemize}

\section{Appendix: Proof of lower bounds for alternative red noise} 
    \label{app:B}
    In this appendix, we study the covariance of the $w^{\text{R}}_g=w^{\text{R}}_g(x,t)$ component of the strong solution of \eqref{eq:syst_w_SR2}. The method resembles the one employed on the strong solution \eqref{eq:syst_w_SR1} in Section \ref{sec:4}, which we outline below concisely. We define the operator
    \begin{align*}
        A:= \begin{pmatrix}
            \partial^2_{xx} & -\kappa \sigma_{\text{R}} \\
            0 & -\kappa
        \end{pmatrix}: \mathcal{D}\left(\partial_{xx}^2\right) \times L^2([0,L]) \to L^2([0,L]) \times L^2([0,L]) 
    \end{align*}
    and its adjoint in $L^2([0,L])\times L^2([0,L])$,
    \begin{align*}
        A^*:= \begin{pmatrix}
            \partial^2_{xx} & 0 \\
            -\kappa \sigma_{\text{R}} & -\kappa
        \end{pmatrix}: \mathcal{D}\left(\partial_{xx}^2\right) \times L^2([0,L]) \to L^2([0,L]) \times L^2([0,L]) .
    \end{align*}
    The covariance operator, 
    \begin{align*}
        V_t:= \begin{pmatrix}
            V^{\RomanNumeralCaps{1}}_t & V^{\RomanNumeralCaps{2}}_t \\ V^{\RomanNumeralCaps{3}}_t & V^{\RomanNumeralCaps{4}}_t
        \end{pmatrix} ,
    \end{align*}
    of the solution of the system at time $t>0$ satisfies the finite-time Lyapunov equation,
    \begin{align} \label{eq:finite-time-lyap_2}
        A V_t + V_t A^* = \txte^{t A} \begin{pmatrix} \sigma_{\text{R}}^2 \sigma_{\xi}^2 Q & \sigma_{\text{R}} \sigma_{\xi}^2 Q \\ \sigma_{\text{R}} \sigma_{\xi}^2 Q & \sigma_{\xi}^2 Q \end{pmatrix}  \txte^{t A^*} - \begin{pmatrix} \sigma_{\text{R}}^2 \sigma_{\xi}^2 Q & \sigma_{\text{R}} \sigma_{\xi}^2 Q \\ \sigma_{\text{R}} \sigma_{\xi}^2 Q & \sigma_{\xi}^2 Q \end{pmatrix} .
    \end{align}
    Once again, for simplicity, we assume that $-\kappa$ is not an eigenvalue of $\partial^2_{xx}$. The semigroup $\txte^{t A}$ is then defined as
    \begin{align*}
        \txte^{t A}
        =\begin{pmatrix}
            \txte^{t \partial^2_{xx}} & - \kappa \sigma_{\text{R}} t \int_0^1 \txte^{t s \partial^2_{xx}} \txte^{-t (1-s) \kappa} \text{d}s \\
            0 & \txte^{-t \kappa}
        \end{pmatrix}
        =\begin{pmatrix}
            \txte^{t \partial^2_{xx}} & - \kappa \sigma_{\text{R}} \operatorname{R}\left( \partial^2_{xx} + \kappa \right) \left( \txte^{t \partial^2_{xx}} - \txte^{-t \kappa} \right) \\
            0 & \txte^{-t \kappa}
        \end{pmatrix} ,
    \end{align*}
    and its adjoint, in respect to the $L^2([0,L])\times L^2([0,L])$ scalar product, as
    \begin{align*}
        {\txte^{t A}}^* = \txte^{t A^*}
        =\begin{pmatrix}
            \txte^{t \partial^2_{xx}} & 0 \\
            - \kappa \sigma_{\text{R}} \operatorname{R}\left( \partial^2_{xx} + \kappa \right) \left( \txte^{t \partial^2_{xx}} - \txte^{-t \kappa} \right) & \txte^{-t \kappa}
        \end{pmatrix} .
    \end{align*}
    Solving \eqref{eq:finite-time-lyap_2} implies that
    \begin{align*}
        V_t^{\RomanNumeralCaps{2}} &= \sigma_{\text{R}} \sigma_{\xi}^2 \operatorname{R} \left(\partial^2_{xx} - \kappa \right) \left( - \frac{1}{2} \left( 1 + \txte^{-2 t \kappa} \right) + \txte^{t \left(\partial^2_{xx}-\kappa\right)} - \kappa \operatorname{R}\left( \partial^2_{xx} + \kappa \right) \txte^{-t \kappa} \left( \txte^{t \partial^2_{xx}} - \txte^{-t \kappa} \right) \right) Q , \\
        V_t^{\RomanNumeralCaps{3}} &= \sigma_{\text{R}} \sigma_{\xi}^2 Q \left( - \frac{1}{2} \left( 1 + \txte^{-2 t \kappa} \right) + \txte^{t \left( \partial^2_{xx} - \kappa \right)} - \kappa \operatorname{R}\left( \partial^2_{xx} + \kappa \right) \txte^{-t \kappa} \left( \txte^{t \partial^2_{xx}} - \txte^{-t \kappa} \right) \right) \operatorname{R}\left( \partial^2_{xx} - \kappa \right) , \\
        V_t^{\RomanNumeralCaps{4}} &= \frac{\sigma_{\xi}^2}{2 \kappa} \left( 1- \txte^{-2 t \kappa} \right) Q ,
    \end{align*}
    and that the equation
    \begin{align} \label{eq:finite-time_lyap_small_2}
        \partial^2_{xx} V_t^{\RomanNumeralCaps{1}} + V_t^{\RomanNumeralCaps{1}} \partial^2_{xx} 
        = &\sigma_{\text{R}}^2 \sigma_{\xi}^2 \Bigg( \kappa \operatorname{R} \left(\partial^2_{xx} - \kappa \right) \left( - \frac{1}{2} \left( 1 + \txte^{-2 t \kappa} \right) + \txte^{t \left(\partial^2_{xx}-\kappa\right)} - \kappa \operatorname{R}\left( \partial^2_{xx} + \kappa \right) \txte^{-t \kappa} \left( \txte^{t \partial^2_{xx}} - \txte^{-t \kappa} \right) \right) Q \nonumber \\
        &+ Q \left( - \frac{1}{2} \left( 1 + \txte^{-2 t \kappa} \right) + \txte^{t \left( \partial^2_{xx} - \kappa \right)} - \kappa \operatorname{R}\left( \partial^2_{xx} + \kappa \right) \txte^{-t \kappa} \left( \txte^{t \partial^2_{xx}} - \txte^{-t \kappa} \right) \right) \kappa \operatorname{R}\left( \partial^2_{xx} - \kappa \right) \\
        &+ \left( \txte^{t \partial^2_{xx}}  - \kappa \operatorname{R}\left( \partial^2_{xx} + \kappa \right) \left( \txte^{t \partial^2_{xx}} - \txte^{-t \kappa} \right) \right) 
        Q \left( \txte^{t \partial^2_{xx}}  - \kappa \operatorname{R}\left( \partial^2_{xx} + \kappa \right) \left( \txte^{t \partial^2_{xx}} - \txte^{-t \kappa} \right) \right) - Q \Bigg) \nonumber
    \end{align}
    holds. For $(n_1,n_2)\in\mathbb{N}\times\mathbb{N}$, we label $p_{n_1,n_2}=\left\langle e_{n_1}, Q e_{n_2} \right\rangle$. For $(n_1,n_2)\in\mathbb{N}\times\mathbb{N}\setminus(0,0)$, the equation \eqref{eq:finite-time_lyap_small_2} implies the definition of $\gamma_{n_1,n_2}:= \left\langle e_{n_1}, V^{\RomanNumeralCaps{1}}_t e_{n_2} \right\rangle$, as described in \eqref{eq:alternative_gammas} and \eqref{eq:alternative_gammas_0}. Lastly, the form
        \begin{align*}
            V_t= \sigma_{\xi}^2 \int_0^t \txte^{s A} \begin{pmatrix}
                \sigma_{\text{R}}^2 Q & \sigma_{\text{R}} Q \\ \sigma_{\text{R}} Q & Q
                \end{pmatrix} \txte^{s A^*} \text{d} s ,
        \end{align*}
        implies the construction of $\gamma_{0,0}$,
    \begin{align*}
        \gamma_{0,0}&=\left\langle \begin{pmatrix}
            e_0 \\ 0
        \end{pmatrix}, V_t \begin{pmatrix}
            e_0 \\ 0
        \end{pmatrix} \right\rangle_{L^2([0,L])\times L^2([0,L])}
        = \left\langle e_0, V^{\RomanNumeralCaps{1}}_t e_0 \right\rangle \\
        &= \sigma_{\text{R}}^2 \sigma_{\xi}^2 \int_0^t \left\langle e_0, \left( \txte^{s \partial^2_{xx}} - \kappa \operatorname{R}\left( \partial^2_{xx} + k \right) \left( \txte^{s \partial^2_{xx}} - \txte^{-s \kappa} \right) \right) Q \left( \txte^{s \partial^2_{xx}} - \kappa \operatorname{R}\left( \partial^2_{xx} + k \right) \left( \txte^{s \partial^2_{xx}} - \txte^{-s \kappa} \right) \right) e_0 \right\rangle \text{d}s\\
        &= \sigma_{\text{R}}^2 \sigma_{\xi}^2 p_{0,0} \int_0^t \left( 1 - \left( 1 - \txte^{-s \kappa} \right) \right) \text{d}s 
        = \sigma_{\text{R}}^2 \sigma_{\xi}^2 p_{0,0} \frac{1-\txte^{-2 t \kappa}}{2 \kappa} .
    \end{align*}
    For these constants $\left\{ \gamma_{n_1,n_2} \right\}_{(n_1,n_2)\in \mathbb{N}\times\mathbb{N}}$, we obtain lower bounds of the probability of turbulence initiation as described in Remark \ref{rmk:other_red} for $q=q(x,t)$, strong solution of \eqref{eq:syst_q}, associated to the choices of $\sigma_{\text{I}}$, $\sigma_{\text{S}}$, $\sigma_{\text{R}}$ and $F$ assumed in \eqref{eq:syst_w_SR2}.

\end{document}